\documentclass[11pt]{article}
\usepackage[T1]{fontenc}
\usepackage{amsmath,amssymb,amsthm,mathtools,bm}
\usepackage{lmodern}
\usepackage[margin=1in]{geometry}
\usepackage{booktabs,array,longtable,multirow,tabularx}
\usepackage{microtype,enumitem}
\usepackage[dvipsnames]{xcolor}
\usepackage{algorithm}
\usepackage[noend]{algpseudocode}
\usepackage{tikz}
\usetikzlibrary{arrows.meta,positioning,fit,calc}
\usepackage[numbers,sort&compress]{natbib}
\usepackage[hidelinks]{hyperref}
\usepackage[nameinlink,capitalize,noabbrev]{cleveref}
\allowdisplaybreaks
\setlist[itemize]{leftmargin=1.5em,itemsep=1pt,topsep=3pt}
\setlist[enumerate]{leftmargin=1.65em,itemsep=1pt,topsep=3pt}

\newcommand{\R}{\mathbb R}
\newcommand{\E}{\mathbb E}
\newcommand{\PP}{\mathbb P}
\newcommand{\Lip}{\operatorname{Lip}}

\newcommand{\supp}{\operatorname{supp}}
\newcommand{\argmin}{\operatorname*{arg\,min}}
\newcommand{\norm}[1]{\left\lVert #1\right\rVert}
\newcommand{\ip}[2]{\left\langle #1,#2\right\rangle}
\newcommand{\tO}{\widetilde O}

\newcommand{\tTheta}{\widetilde\Theta}

\newcommand{\SFO}{\mathsf{SFO}}
\newcommand{\T}{\mathsf T}
\newcommand{\eps}{\epsilon}

\newtheorem{theorem}{Theorem}[section]
\newtheorem{lemma}[theorem]{Lemma}
\newtheorem{proposition}[theorem]{Proposition}
\newtheorem{corollary}[theorem]{Corollary}
\newtheorem{assumption}[theorem]{Assumption}
\theoremstyle{definition}
\newtheorem{definition}[theorem]{Definition}
\theoremstyle{remark}
\newtheorem{remark}[theorem]{Remark}

\crefname{theorem}{theorem}{theorems}
\crefname{lemma}{lemma}{lemmas}
\crefname{corollary}{corollary}{corollaries}
\crefname{proposition}{proposition}{propositions}
\crefname{assumption}{assumption}{assumptions}
\Crefname{assumption}{Assumption}{Assumptions}
\crefname{definition}{definition}{definitions}
\Crefname{definition}{Definition}{Definitions}
\crefname{remark}{remark}{remarks}
\Crefname{remark}{Remark}{Remarks}
\crefname{section}{section}{sections}
\crefname{algorithm}{algorithm}{algorithms}

\title{\textbf{Closing the Accuracy Gap in Stochastic First-Order Bilevel Optimization}}
\newcommand{\authornames}{Haihan Zhang, Wendao Wu, Chenheng Zhang, Yanyi Li, Chunyuan Zheng, Cong Fang, Haoxuan Li, Zhouchen Lin}
\newcommand{\affiliation}{Peking University}

\newcommand{\mail}[1]{\href{mailto:#1}{\texttt{#1}}}
\newif\ifanonymous
\anonymousfalse
\ifanonymous
\author{Anonymous Authors}
\hypersetup{pdfauthor={Anonymous Authors}}
\else
\author{Haihan Zhang$^{1,*}$\quad Wendao Wu$^{1,*}$\quad Chenheng Zhang$^{1,*}$\\[0.20em]
Yanyi Li$^1$\quad Chunyuan Zheng$^1$\\[0.20em]
Cong Fang$^{1,\dagger}$\quad Haoxuan Li$^{1,\dagger}$\quad Zhouchen Lin$^{1,\dagger}$\\[0.60em]
\small $^1$\affiliation\\[0.35em]
\footnotesize\mail{colombo.2026@outlook.com}\\[-0.05em]
\small $^*$Equal contribution.\quad $^\dagger$Corresponding authors.}
\hypersetup{pdfauthor={\authornames}}
\fi
\hypersetup{pdftitle={Closing the Accuracy Gap in Stochastic First-Order Bilevel Optimization}}

\date{}

\begin{document}
\maketitle

\begin{abstract}
We characterize the optimal accuracy dependence for smooth nonconvex--strongly-convex bilevel optimization with stochastic first-order oracles. For globally unbiased fresh-gradient observations with bounded variance, we prove an $\Omega(\epsilon^{-6})$ lower bound matching the accuracy exponent of existing upper bounds. We consider globally Lipschitz gradients, a bounded upper gradient in the lower variable, and Lipschitz lower Hessian blocks, with target $\E\norm{\nabla F(\widehat x)}\le\epsilon$. Let $L$ be the common first-order scale, $\mu$ the lower strong-convexity modulus, $\rho$ the lower Hessian-variation budget, $\kappa=L/\mu$, and $\chi=1+\rho/\mu$. For $\rho\gtrsim\mu$, $\chi\lesssim\kappa$, sufficiently high accuracy, and sufficiently large dimension, we establish
\[
\Omega\!\left(L\kappa^2\Delta\epsilon^{-2}
+L^3\chi^2\kappa^8\sigma_g^2\Delta\epsilon^{-6}\right),
\]
where $\Delta$ is the initial gap budget and $\sigma_g^2$ is the lower-gradient variance budget. The bound holds over full Euclidean spaces against arbitrary randomized adaptive algorithms, even when every sample is the gradient of a scalar function. In the common-scale regime $\rho=\Theta(L)$, the leading stochastic term has condition-number dependence $\kappa^{10}$, compared with the achievable $\kappa^{11}$ dependence. The proof embeds a sequential hard objective into an exact lower response while preserving global regularity and finite gap. A multiscale decomposition limits the gradient signal carrying each new direction, yielding the required sample complexity. We complement this lower bound with a frozen-linear-tilt estimator whose bias is proportional to lower Hessian variation. Its analysis makes this structural dependence explicit and yields matching leading rates in the specified curvature-controlled and small-gap regimes.
\end{abstract}

\noindent\textbf{Keywords:} Bilevel optimization; stochastic oracles; lower bounds; condition number.\par
\begin{samepage}
\noindent\textbf{AI Usage.}
Nearly the entire research pipeline for this paper was carried out by Colombo, our laboratory’s internal auto-research system, powered by GPT-5.6 Sol. The system also conducted a Lean-backed article audit of the resulting manuscript. The authors subsequently reviewed and approved the mathematical claims, presentation, and formal artifacts, and take responsibility for the final manuscript. The complete Lean audit report and the system’s technical report will be made public at a later date. \par
\end{samepage}

\paragraph{Publication Intent and Recognition of Independent Work.} 
This preprint is released as a research output of our study of
automated scientific research, for public scrutiny and discussion.
As we do not intend to submit this manuscript to a journal or
conference, it has not undergone the full editorial refinement
and internal review we would normally require for submission.
The authors remain responsible for its contents and for correcting
any errors or inaccurate attribution.

We recognize that releasing these results may affect researchers
who have independently been pursuing the same questions, including
work that is not yet public.
If this applies to your work, we invite you to contact us at
\texttt{colombo.2026@outlook.com}.
A brief description of the overlap, together with any dated
materials you are comfortable sharing, would help us understand
its scope and chronology.
Such materials might include draft excerpts, presentation slides,
or revision records; a complete manuscript or unpublished proof
is not required to begin the conversation.

We will review these cases carefully and update citations,
acknowledgments, and accounts of independent or concurrent work
as appropriate, including revising priority claims where warranted.
Nonpublic materials will be treated confidentially and used only
to assess the overlap unless their authors authorize otherwise.
We will obtain explicit permission from the authors before
publicly discussing such materials or providing them to an
automated research system.

\section{Introduction}\label{sec:intro}
We study the number of stochastic gradient queries needed to find a stationary point of a nonconvex--strongly-convex bilevel problem:
\begin{equation}\label{eq:intro-bilevel}
\min_{x\in\R^{d_x}}F(x)=f(x,y^*(x)),
\qquad y^*(x)=\argmin_{y\in\R^{d_y}}g(x,y).
\end{equation}
Strong convexity of $g(x,\cdot)$ makes the lower response $y^*(x)$ unique. The outer objective therefore incorporates both the direct effect of $x$ and the change in an optimized lower decision. Our target is an output $\widehat x$ satisfying $\E\norm{\nabla F(\widehat x)}\le\epsilon$. At each query, the algorithm receives fresh unbiased estimates of the full gradients of $f$ and $g$, with uniformly bounded variance over the entire Euclidean domain. We call this the standard stochastic first-order (SFO) model.

The cost of this access model arises from recovering the lower response through noisy gradients. Indeed, the hypergradient satisfies
\[
\nabla F(x)=f_x(x,y^*)-g_{xy}(x,y^*)g_{yy}(x,y^*)^{-1}f_y(x,y^*).
\]
Hessian-vector product methods approximate the inverse-Hessian action directly \citep{JiYangLiang2021}. Fully first-order methods estimate its effect by solving perturbed lower problems using gradients. This approach led from the fully stochastic $\widetilde O(\epsilon^{-7})$ accuracy dependence of Kwon et al.\ to the $\widetilde O(\epsilon^{-6})$ dependence of Chen, Ma, and Zhang \citep{KwonEtAl2023,CMZ}, with structural parameters fixed. Establishing the information required by the SFO model determines whether this sixth-power cost is intrinsic to the problem class.

The existing lower bounds give two complementary benchmarks. Chen, Ji, and Zhang obtain $\Omega(\kappa_y^{9/2}\epsilon^{-4})$ for standard SFO access through a constrained construction and zero-respecting algorithms \citep{CJZ}. Kwon, Kwon, and Lyu obtain a sixth-power accuracy lower bound for a $y^*$-aware interface with locally reliable gradients \citep{KKL}. The extension of sixth-power hardness to standard stochastic oracles is also identified as an open problem by Chen et al.\ \citep{CLCZ}. We address this question for globally Lipschitz gradients and Lipschitz lower Hessian blocks, with a bounded upper gradient in the lower variable and a finite initial hyperobjective gap. In this class, we prove that $\epsilon^{-6}$ queries are necessary in the positive-variation regime, even for arbitrary randomized adaptive algorithms.

\paragraph{The accuracy gap and conditioning.}
Our main lower bound also quantifies the effect of lower-level conditioning. Write $\kappa=L/\mu$ and $\chi=1+\rho/\mu$, where $L$ is the common first-order scale, $\mu$ is the lower strong-convexity modulus, and $\rho$ bounds the variation of the lower Hessian blocks. For initial gap budget $\Delta$ and lower-gradient variance budget $\sigma_g^2$, we prove
\begin{equation}\label{eq:intro-new-lower}
\Omega\!\left(L\kappa^2\Delta\epsilon^{-2}
+L^3\chi^2\kappa^8\sigma_g^2\Delta\epsilon^{-6}\right).
\end{equation}
The bound holds for $\rho\gtrsim\mu$, $\chi\lesssim\kappa$, sufficiently small $\epsilon$, and sufficiently large dimension, for both fixed-horizon and worst-case expected query budgets. It holds even for conservative stochastic oracles, whose individual samples are gradients of scalar functions. Thus the hard oracle respects both global unbiasedness and the gradient structure of the observations.

In the common-scale regime $\rho=\Theta(L)$, the stochastic term in \eqref{eq:intro-new-lower} is $\Omega(\kappa^{10}\epsilon^{-6})$, suppressing scale, noise, and gap factors. The corresponding upper bound proved here is $\widetilde O(\kappa^{11}\epsilon^{-6})$. This closes the gap in the accuracy exponent and places the leading condition-number powers at $10$ and $11$. For comparison with prior work, we use the $\bar\kappa_y^{11}\epsilon^{-6}$ upper benchmark reported in the normalization of \citet[Table~4]{CJZ} for the method of \citet{CMZ}; the prior condition parameters are comparable to $\kappa$ in the common-scale regime. Throughout, accuracy optimality refers to the regularity class specified above. Additional higher-order smoothness permits faster rates through higher-order finite differences \citep{CLCZ}.

\paragraph{Making a sequential hard problem globally admissible.}
The main construction separates the gradient signal needed for optimization progress from the information in a stochastic observation. A sequential hard objective forces an algorithm to discover successive directions before reaching a small hypergradient. Embedding this objective into a bilevel problem requires the lower response to reproduce the chain while maintaining strong convexity, global smoothness, and a finite initial gap. A compensated lift and a coercive wrapper provide these properties. The next task is to make the informative part of the gradient small enough that bounded-variance samples reveal each new direction only rarely. A decomposition across geometrically separated scales achieves this while keeping the lower Hessian-variation budget controlled.

For intuition, fix the structural parameters and a positive noise budget. The hard family has $T\asymp\epsilon^{-2}$ sequential stages and an informative gradient amplitude $B\asymp\epsilon^2$. A globally unbiased Bernoulli oracle can reveal that signal with probability of order $B^2/\sigma_g^2$. Discovering the required stages then costs order $T\sigma_g^2/B^2\asymp\epsilon^{-6}$ queries. The formal proof retains the conditioning and variation factors in both scales. To handle arbitrary adaptive queries and terminal outputs, a projection coupling tracks the information revealed about the hidden directions, including exact scalar information. This supplies the algorithm coverage needed for the standard-SFO lower bound.

\paragraph{Hessian variation and the upper bound.}
The complementary upper analysis explains how lower Hessian variation enters the cost of estimating the hypergradient. At an outer point $x$, set $y_0=y^*(x)$ and freeze $a_0=f_y(x,y_0)$. We solve the linearly tilted lower problem $\min_y\{g(x,y)+t\langle a_0,y\rangle\}$ and use the resulting difference in $g_x$, together with $f_x(x,y_0)$, to estimate $\nabla F(x)$. The linear perturbation preserves the lower curvature. These choices give the deterministic bias bound
\[
\frac{(1+L_g/\mu)\rho C_f^2}{2\mu^2}\,t,
\qquad \norm{f_y}\le C_f.
\]
This identifies the structural role of the frozen tilt: the approximation error is controlled directly by lower Hessian variation. Fresh inner solves and gradient batches turn the ideal estimator into an implementable method with complete sampling costs. In the positive-variation range, its leading general upper term is $\widetilde O(L^3\chi^3\kappa^8\sigma_g^2\Delta\epsilon^{-6})$. The analysis preserves the established sixth-power accuracy order and refines its dependence on variation. The matching regimes developed later follow from curvature control and a separate small-gap construction.

The contributions have two parts. First, we establish the standard-SFO lower bound \eqref{eq:intro-new-lower}, closing the accuracy-exponent gap and strengthening the conditioning dependence through a globally admissible construction for general randomized algorithms. Second, we provide a variation-sensitive upper analysis that links hypergradient bias, curvature, and sampling cost, and identifies the stated regimes of matching leading complexity.

\paragraph{Organization.}
\Cref{sec:related,sec:setup} place the results in the oracle-complexity literature and specify the model. \Cref{sec:results} states the principal bounds and their matching regimes. \Cref{sec:lower-overview,sec:upper-overview} explain the lower construction and the frozen-tilt method, and \cref{sec:geometry} relates the rates to intrinsic curvature. The appendices give the constructions, algorithms, and proofs.

\paragraph{Concurrent work.}
This work was developed independently, and an initial manuscript was completed on September 18, 2026. A concurrent preprint by Gu, Wu, and Yang \citep{gu2026firstorderlower}, first submitted to arXiv on September 18, 2026, establishes a stochastic first-order lower bound of
$\Omega(\Delta\sigma^2\kappa_y^8\epsilon^{-6})$
in the noise-dominated regime for arbitrary randomized adaptive algorithms. Both works establish the necessity of the $\epsilon^{-6}$ accuracy dependence under globally unbiased stochastic first-order access. Beyond this shared accuracy characterization, our results quantify the joint dependence on conditioning and lower Hessian variation:
\[
\Omega\!\left(
L\kappa^2\Delta\epsilon^{-2}
+L^3\chi^2\kappa^8\sigma_g^2\Delta\epsilon^{-6}
\right),
\qquad
\kappa=L/\mu,\quad \chi=1+\rho/\mu,
\]
for $\rho\gtrsim\mu$, $\chi\lesssim\kappa$, sufficiently high accuracy, and sufficiently large dimension. In the common-scale regime $\rho=\Theta(L)$, the leading stochastic term scales as $\kappa^{10}\epsilon^{-6}$. Our multiscale construction captures this additional variation dependence, while a complementary frozen-linear-tilt method yields an upper bound within a factor $\chi$ of the leading lower bound, up to logarithms. We further obtain matching leading rates in the specified curvature-controlled and small-gap regimes.\footnote{Posting our manuscript to arXiv has been delayed by submission limits affecting the lead authors. The manuscript is available through our \href{https://www.overleaf.com/read/zwkjhpscnfgg\#9cd71a}{Overleaf project}. Readers wishing to verify its development timeline are welcome to contact the authors by email to request reviewer access and the relevant dated revision records if these are unavailable through the public read-only link.}

\section{Related Work}\label{sec:related}
\paragraph{Fully first-order algorithms.}
The fully first-order approach of Kwon et al. \citep{KwonEtAl2023} established finite-time guarantees for stochastic bilevel optimization. In the condition-parameter normalization summarized by \citet{CJZ}, the method of Chen, Ma, and Zhang \citep{CMZ} has deterministic and fully stochastic bounds of $\widetilde O(\bar\kappa_y^4\epsilon^{-2})$ and $\widetilde O(\bar\kappa_y^{11}\epsilon^{-6})$, respectively. Accelerated inner solves give the deterministic first-order order $\widetilde O(\bar\kappa_y^{7/2}\epsilon^{-2})$ discussed in \citet{CJZ}. Our upper analysis retains the fully stochastic sixth-power order and makes the Hessian-variation parameter explicit.

\paragraph{Perturbation estimators and higher smoothness.}
The penalty estimator of \citet{CMZ} uses a minimizer of $g(x,y)+t f(x,y)$ and evaluates the upper gradient at that perturbed solution. Our estimator freezes $f_y(x,y^*(x))$ as a linear tilt and evaluates $f_x$ at the unperturbed lower solution. These choices give the bias estimate \eqref{up:eq:tilt-bias}, including exactness of the ideal estimator at zero Hessian variation. Chen et al.\ \citep{CLCZ} interpret the penalty estimator through finite differences and obtain faster accuracy rates under additional higher-order smoothness. Our upper analysis makes the dependence on the lower Hessian-variation budget explicit under Assumption~\ref{ass:primitive}.

\paragraph{Oracle models and lower bounds.}
The standard-SFO lower bound of Chen, Ji, and Zhang \citep{CJZ} has stochastic order $\Omega(\kappa_y^{9/2}\epsilon^{-4})$, with constrained-domain and zero-respecting qualifications. The same work records deterministic first-order and stochastic-HVP lower bounds of $\Omega(\kappa_y^{5/2}\epsilon^{-2})$ and $\Omega(\kappa_y^4\epsilon^{-4})$. The $y^*$-aware model of Kwon, Kwon, and Lyu \citep{KKL} supplies an approximate lower solution and locally unbiased gradients; their complexity characterization gives $\Theta(\epsilon^{-6})$ under minimal assumptions and $\Theta(\epsilon^{-4})$ under an additional smoothness condition. These oracle-specific results motivate treating global unbiasedness, legal queries, and algorithm coverage explicitly.

\begin{table}[t]
\centering
\caption{Stochastic first-order complexity for $\E\norm{\nabla F(\widehat x)}\le\epsilon$. Common scale, noise, and gap factors are suppressed. The prior upper benchmark uses the normalization reported in \citet[Table~4]{CJZ}. Prior condition parameters are identified with $\kappa$ in the common-scale comparison; $\chi=\Theta(\kappa)$ in the final row. A dash means that the row reports the indicated side of the comparison.}
\label{tab:rates}
\small
\setlength{\tabcolsep}{4pt}
\renewcommand{\arraystretch}{1.25}
\begin{tabularx}{\linewidth}{p{0.25\linewidth}p{0.18\linewidth}p{0.19\linewidth}X}
\toprule
Work & Lower bound & Upper bound & Scope \\
\midrule
Kwon et al.\ \citep{KwonEtAl2023} & --- & $\widetilde O(\operatorname{poly}(\kappa)\epsilon^{-7})$ & Fully first-order stochastic method \\
Chen, Ma, and Zhang \citep{CMZ,CJZ} & --- & $\widetilde O(\kappa^{11}\epsilon^{-6})$ & Fully stochastic first-order access \\
Chen, Ji, and Zhang \citep{CJZ} & $\Omega(\kappa^{9/2}\epsilon^{-4})$ & --- & Constrained construction; zero-respecting algorithms \\
This work & $\Omega(\chi^2\kappa^8\epsilon^{-6})$ & $\widetilde O(\chi^3\kappa^8\epsilon^{-6})$ & $\rho\gtrsim\mu$; arbitrary randomized adaptive algorithms \\
This work, common scale & $\Omega(\kappa^{10}\epsilon^{-6})$ & $\widetilde O(\kappa^{11}\epsilon^{-6})$ & Full Euclidean domain; conservative lower oracle \\
\bottomrule
\end{tabularx}
\end{table}

\paragraph{Sequential information constructions.}
The zero-chain and stochastic information arguments build on stationary-point lower-bound techniques developed by Carmon et al. \citep{Carmon} and Arjevani et al. \citep{ACD}. Here the exact compensated lift preserves the chain as a bilevel hyperobjective, the multiscale decomposition controls the informative gradient, and the projection-genie argument extends the construction to arbitrary adaptive queries. \Cref{tab:rates} summarizes the stochastic first-order comparison.

\section{Problem Formulation}\label{sec:setup}
We study \eqref{eq:intro-bilevel} over $x\in\R^{d_x}$ and $y\in\R^{d_y}$.  The following assumptions are used throughout.

\begin{assumption}[Primitive regularity]\label[assumption]{ass:primitive}
The lower objective $g(x,\cdot)$ is $\mu$-strongly convex.  Moreover,
\[
\norm{f_y(x,y)}\le C_f,
\qquad
\Lip(\nabla f)\le L_f,
\qquad
\Lip(\nabla g)\le L_g,
\]
and the blocks $g_{yy}$ and $g_{xy}$ are jointly $\rho$-Lipschitz in $(x,y)$.  The initial hyperobjective gap satisfies $F(x_0)-\inf F\le\Delta$.
\end{assumption}

For uniform upper bounds, we also assume a finite initialization certificate
\[
\norm{y_{\rm init}-y^*(x_0)}\le R_0.
\]
Let
\[
k=L_g/\mu,
\qquad
\kappa=L/\mu,
\qquad
\theta=\rho/\mu,
\qquad
\chi=1+\theta.
\]
Rate summaries use the normalization $C_f,L_f,L_g=O(L)$ and, when explicitly stated, $\Delta/L=O(1)$.

\begin{definition}[Standard stochastic first-order oracle]\label{def:sfo}
At an adaptively chosen query $(x,y)$, an SFO returns random vectors $(\widehat\nabla f,\widehat\nabla g)$ satisfying
\[
\E[\widehat\nabla f\mid\mathcal F, x,y]=\nabla f(x,y),
\qquad
\E[\widehat\nabla g\mid\mathcal F, x,y]=\nabla g(x,y),
\]
\[
\E[\norm{\widehat\nabla f-\nabla f}^2\mid\mathcal F,x,y]\le\sigma_f^2,
\qquad
\E[\norm{\widehat\nabla g-\nabla g}^2\mid\mathcal F,x,y]\le\sigma_g^2.
\]
Each call uses a fresh sample and returns only the two gradient vectors at its queried point.  For a conservative SFO, every sample response is the gradient of a globally defined scalar function.
\end{definition}

The target is an output $\widehat x$ satisfying $\E\norm{\nabla F(\widehat x)}\le\eps$. Let $\mathcal I$ denote the admissible mean functions and oracle laws, over all finite dimensions, with the stated public budgets. For randomized adaptive algorithms $\mathcal A$, define the worst-case expected query complexity by
\[
\T_{\SFO}(\eps)=\inf_{\mathcal A}\sup_{I\in\mathcal I}\E_I N_{\mathcal A},
\quad\text{subject to}\quad
\sup_{I\in\mathcal I}\E_I\norm{\nabla F_I(\widehat x_{\mathcal A})}\le\eps.
\]
Here $N_{\mathcal A}$ counts original SFO calls, and expectations include oracle and algorithm randomness. The output may be any measurable function of the transcript and private randomness. Fixed-horizon lower bounds use the additional constraint $N_{\mathcal A}\le N$ almost surely. The algorithms use the public regularity, variance, gap, and initialization bounds. The notation $\widetilde O$ suppresses logarithmic factors in accuracy and initialization; $\widetilde\Theta$ denotes matching upper and lower orders up to these factors.

\section{Main Results}\label{sec:results}
\subsection{Lower bound}
\begin{theorem}[Main lower bound, informal]\label{thm:main-lower-informal}
Fix $1\le\chi\lesssim\kappa$, a positive Hessian-variation budget $\rho\gtrsim\mu$, and sufficiently small $\eps$.  In sufficiently large dimension, every randomized adaptive algorithm has an admissible globally regular NC--SC instance and a conservative SFO requiring
\begin{equation}\label{eq:main-lower}
\Omega\!\left(
\kappa^2L\Delta\eps^{-2}
+L^3\chi^2\kappa^8\sigma_g^2\Delta\eps^{-6}
\right)
\end{equation}
queries to guarantee $\E\norm{\nabla F(\widehat x)}\le\eps$.
\end{theorem}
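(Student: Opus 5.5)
The proof treats the two terms of \eqref{eq:main-lower} separately and takes the maximum. Since $\max\{a,b\}\ge(a+b)/2$, it suffices to build one hard family per term.

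\textbf{Deterministic term.} I take the Carmon et al.\ zero-chain hard function $\bar F_T$ on $\R^T$, with $T\asymp \Delta L_{\rm eff}/\epsilon^2$, and realize it as a hyperobjective. I set $g(x,y)=\tfrac{\mu}{2}\norm{y-Ax}^2$ and $f(x,y)=\lambda\bar F_T(U^\top y/r)$ for a hidden orthogonal $U$. Then $y^*(x)=Ax$, so $F$ is a scaled chain. The coupling amplitude of $A$ (order $L$) and the strong convexity $\mu$ enter through $\nabla F=A^\top\nabla_y f$. I choose the scalings $\lambda,r$ so that $f$ is $L$-smooth, $\norm{f_y}\le C_f$ and the gap is $\Delta$. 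The resulting effective smoothness of $F$ yields $T\gtrsim\kappa^2L\Delta\epsilon^{-2}$. Random rotation and the large-dimension argument of Carmon et al.\ then extend the bound from zero-respecting to randomized algorithms. This part is routine.

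\textbf{Stochastic term.} Here I follow the paper's outline. First, an exact compensated lift: I design $g$ so that its Hessian blocks depend on $(x,y)$ and $y^*(x)$ traces a chain-structured response. The hidden direction $u_i$ must enter $y^*$ only through the lower curvature, so that $\rho$ controls the signal. A coercive wrapper (quadratic growth outside a ball, with a compensating term in $f$) keeps $F$ bounded below and all gradients globally Lipschitz. The chain has $T\asymp L\kappa^2\Delta\epsilon^{-2}$ stages, exactly as in the deterministic term.

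Second, a multiscale decomposition. I split the informative part of $\nabla_y g$ across geometrically separated scales, so that at any query the component carrying the next undiscovered direction has amplitude $B\asymp \epsilon^2/(L\chi\kappa^3)$, up to constants. At the same time, $g_{yy},g_{xy}$ must remain $\rho$-Lipschitz and $\nabla F$ must still exceed $\epsilon$ until the whole chain is discovered. The target is $T\sigma_g^2/B^2\asymp L^3\chi^2\kappa^8\sigma_g^2\Delta\epsilon^{-6}$, so I tune $B$ to hit this, with $\chi\lesssim\kappa$ and $\rho\gtrsim\mu$ guaranteeing the scales nest.

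Third, the oracle. I use the Arjevani et al.\ Bernoulli oracle: it returns the informative component scaled by $1/p$ with probability $p\asymp B^2/\sigma_g^2$ and zero otherwise. Each sample is the gradient of the scalar function $g_{\rm known}+\xi p^{-1}g_{\rm info}$, so the oracle is conservative. It is unbiased with variance at most $\sigma_g^2$.

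Fourth, a projection coupling (genie) argument for arbitrary adaptive algorithms. Whenever a query lies near the span of the revealed directions, the genie reveals the projections of the query onto them, including exact scalar information. Conditioned on the transcript, the unrevealed $u_j$ stay nearly uniform on the complement sphere in high dimension. Hence progress past stage $i$ requires a Bernoulli success, each of which costs $\ge 1/p$ queries in expectation. A standard concentration or optional-stopping argument then handles both fixed-horizon budgets and expected query counts. Finally, a small-$\epsilon$ condition ensures $\E\norm{\nabla F(\widehat x)}>\epsilon$ unless about $T$ successes have occurred.

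\textbf{Main obstacle.} The hardest step is the second: making the lifted lower problem reproduce the chain exactly while the informative gradient amplitude is forced down to order $\epsilon^2$. This must not violate global Lipschitzness of $g_{yy},g_{xy}$ with budget $\rho$, and it must not destroy the $\epsilon$ hypergradient floor. I would first try a dyadic sum of bump-modulated copies of the chain link, each scale contributing Hessian variation $\lesssim\rho 2^{-k}$, and check that the $\chi^2$ factor emerges from the ratio of curvature variation to $\mu$ in $B$.
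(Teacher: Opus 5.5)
There is a genuine gap. Your deterministic term is a workable, if redundant, alternative after a small fix noted at the end. The stochastic term, however, carries the whole content of the theorem, and there Steps 1--2 restate the architecture without a construction. In particular, $B\asymp\epsilon^2/(L\chi\kappa^3)$ is tuned to hit the target rather than derived from anything.

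\emph{The lift.} The chain must be split by tail masks as $H=A+R$. The prefix part $A$ (with the wrapper) goes into the exactly observed upper objective. Only the frontier residual $R$, whose jets obey $\norm{D^kR}_\infty\le C_kL\beta^{2-k}$ uniformly in $T$, goes into $g$. It enters through the compensated coupling $\frac{\mu}{2L^2}r(u)^2-\frac{\mu}{L}r(u)q(s)$, with $q$ linear near $s=r(u)/L$. This gives exactly $u^*=\kappa x$, $s^*=r(\kappa x)/L$ and $F_U(x)=\mathcal H_U(\kappa x)$. With this coupling, putting all of $H$ into $g$ would make the third derivatives grow with $T$. So the hidden direction enters the response through a tilt linear in $s$, not ``through the curvature.'' The parameter $\vartheta$ enters only as the third-derivative budget that limits how narrow the normal profiles may be.

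\emph{Why several scales are needed.} Coupling $r$ (amplitude $L\beta^2$) to one profile of width $w$ gives an informative $s$-gradient of order $\mu\beta^2$ and a $u$-gradient of order $\mu\beta w$. You must therefore flatten the frontier coordinate at $\tau\asymp\beta/\vartheta$, so that $r-r_\tau$ has amplitude $L\beta\tau$, and couple that piece at width $\asymp\beta/\vartheta$. The remainder $r_\tau$ still has amplitude $L\beta^2$, so its $sss$ block $\mu\beta^2/w^2$ forces $w\gtrsim\beta/\sqrt\vartheta$. Its $uuu$ block $\mu\beta w/\tau^2$ is then of order $\mu\vartheta^{3/2}$, over budget. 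The paper interpolates with flattened differences $E_j=R_{\delta_j}-R_{\delta_{j-1}}$, each with its own width: $\delta_j=c_*\beta\vartheta^{-a_j}$ and $w_j=c_*\beta\vartheta^{-b_j}$, where $2a_j-b_j=1$, $2b_j-a_{j-1}=1$ and $2b_0=1$. Every third-order block of every piece is then at most $C\mu\vartheta$. Annulus separation (at most three active pieces) removes the factor $J\approx\log_4\log\vartheta$. A budget split like $\rho2^{-k}$ concentrated at the finest scale could replace the overlap argument, but only once this flattening/width pairing exists.

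\emph{Oracle and genie.} The sample is not ``known part plus informative part.'' The inactive potential $\mathcal K_0$ depends on the hidden frame. It must be built only from pieces flattened at scale at least $\tau$, so that its gradient is supported on the revealed prefix whenever every unrevealed coordinate of $U^\top b(u)$ is below $\tau$. Meanwhile $\mathcal D=\mathcal K-\mathcal K_0$ must satisfy $\norm{\nabla\mathcal D}_\infty\le C\mu\beta^2/\vartheta$.

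Your genie, as written, reveals projections of queries onto revealed directions, which the algorithm already knows. Returned gradients contain exact scalars such as $r(u)$ and $e_j(u)$, so the genie must instead reveal the projections of \emph{all} hidden columns onto the query span. The rotational-invariance bound must then hold at the fine scale $\norm{P_VU_i}\le\tau/(2\mathcal R)$, including for the output direction. With the bounded radial map $b$, this leads to the dimension requirement of order $(M+T)\vartheta^2T\log(MT)$. A wrapper growing quadratically in the lower variable would also violate $\norm{f_y}\le C_f$; this is why the paper uses the linear-growth $P(u)$.

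\emph{Deterministic term.} The separate family is unnecessary, since the single family already gives $T/p=T(1+\sigma_g^2/B^2)\ge T$. As written it also breaks smoothness. Reaching $\kappa^2$ needs $\norm{\mu A}\asymp L$, and then the term $\frac\mu2\norm{Ax}^2$ inside $\frac\mu2\norm{y-Ax}^2$ has curvature $L\kappa$. Drop that $x$-only term and use $\frac\mu2\norm y^2-\mu\ip{y}{Ax}$.
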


The formal statement, including the accuracy range and dimension requirement, is \cref{lb:thm:multiscale-lower}; its full proof appears in \cref{app:lower}.  In the common-scale regime $\rho=\Theta(L)$, \eqref{eq:main-lower} has stochastic term $\Omega(\kappa^{10}\sigma_g^2L^3\Delta\eps^{-6})$.

\subsection{Variation-sensitive upper bound}
Define
\begin{equation}\label{eq:main-Erho}
E_\rho:=\frac{(1+k)\rho C_f^2}{2\mu^2},
\qquad
\Lambda\ge\Lip(\nabla F).
\end{equation}

\begin{theorem}[Frozen-tilt upper bound, informal]\label{thm:main-upper-informal}
Under \cref{ass:primitive} and a finite initialization certificate, there is a fresh-sample fully first-order algorithm with expected complexity
\begin{equation}\label{eq:main-upper}
\tO\!\left[
\left(1+\frac{\Lambda\Delta}{\eps^2}\right)
\left\{
1+k+\frac{(1+k)^2}{\eps^2}
\left[\sigma_f^2+
\left(\frac{L_f^2}{\mu^2}+1+\frac{E_\rho^2}{\eps^2}\right)\sigma_g^2
\right]
\right\}
\right].
\end{equation}
\end{theorem}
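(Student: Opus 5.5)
The plan is to analyze an outer stochastic gradient method driven by the frozen-tilt hypergradient estimator, and to split its error into a deterministic bias and a stochastic variance. We use the standard nonconvex SGD template: with step size $1/\Lambda$ and estimator $\widehat G(x)$ satisfying $\norm{\E\widehat G(x)-\nabla F(x)}\le\eps/4$ and $\E\norm{\widehat G(x)-\E\widehat G(x)}^2\le\eps^2/4$, the descent lemma for the $\Lambda$-smooth $F$ shows that $O(1+\Lambda\Delta/\eps^2)$ outer iterations suffice for a uniformly chosen iterate to satisfy $\E\norm{\nabla F(\widehat x)}\le\eps$. This accounts for the leading factor $(1+\Lambda\Delta\eps^{-2})$ in \eqref{eq:main-upper}. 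The remaining task is to show that one estimator call costs the braced quantity, up to logarithms.

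\textbf{Bias step.} At $x$, set $y_0=y^*(x)$, $a_0=f_y(x,y_0)$, and let $y_t=\argmin_y\{g(x,y)+t\ip{a_0}{y}\}$. The ideal estimator is
\[
G_t(x)=f_x(x,y_0)+\frac{g_x(x,y_t)-g_x(x,y_0)}{t}.
\]
Its bias comes from two second-order Taylor remainders, both controlled by $\rho$. First, $y_t-y_0=-t\,g_{yy}^{-1}a_0+r_1$ with $\norm{r_1}\le \rho\norm{y_t-y_0}^2/(2\mu)$ and $\norm{y_t-y_0}\le tC_f/\mu$. Second, the $g_x$ difference equals $g_{xy}(y_t-y_0)$ plus a remainder of size at most $\rho\norm{y_t-y_0}^2/2$. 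Because $\norm{g_{xy}}\le L_g$, these combine to $\norm{G_t-\nabla F}\le E_\rho t$. We then choose $t\asymp\eps/E_\rho$, capped at a constant when $\rho=0$.

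\textbf{Implementable estimator and variance.} The point $y_0$ and the tilted solution $y_t$ are both approximated by fresh SGD runs on strongly convex problems, warm-started via the certificate $R_0$ and then from the previous outer solution. Reaching accuracy $\delta$ takes $\tO(k+\sigma_g^2/(\mu^2\delta^2))$ lower-gradient calls; the linear tilt leaves the noise unchanged. An error $\delta$ in $y_t$ or $y_0$ perturbs the difference quotient by $L_g\delta/t$, and perturbs $f_x$ and $a_0$ by $L_f\delta$ (the latter propagating through $y_t$ with factor $k$). We therefore need $\delta_0\lesssim\eps\mu/((1+k)L_f)$ for $y_0$ and $\delta\lesssim\eps t/L_g$ for the tilted pair. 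Substituting $t=\eps/E_\rho$ gives a variance cost of order $(1+k)^2\eps^{-2}\sigma_g^2\,(L_f^2/\mu^2+1+E_\rho^2/\eps^2)$. The $1+k$ term is the deterministic contraction cost. Finally, batch-averaging the noisy $g_x$ differences (variance $\sigma_g^2/t^2$) and $f_x$ (variance $\sigma_f^2$) to variance $\eps^2$ contributes the $\sigma_f^2$ term and another term of the same $E_\rho^2\sigma_g^2/\eps^4$ order.

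\textbf{Main obstacle.} The delicate part is making the inner errors enter as bias at the correct order rather than as an extra $1/t$ variance blow-up. A naive high-probability SGD bound for the tilted solve gives $\delta^2\sim\sigma_g^2/(\mu^2 n)$, and the resulting error in the difference quotient must be bounded in expectation, including correlations between the two inner runs. To handle this, we run the two solves on independent fresh samples. We then bound the mean error, using averaged or suffix iterates whose bias decays like $1/n$, separately from the second moment. This should keep the total cost at the claimed $(1+k)^2E_\rho^2\sigma_g^2\eps^{-4}$ per estimator rather than a worse power. The warm-start distances are bounded uniformly through $\norm{y^*(x)-y^*(x')}\le k\norm{x-x'}$, so the logarithmic initialization factors stay under control.
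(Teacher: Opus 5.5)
Your proposal follows the paper's route in Lemma~\ref{sg:lem:upper} and Theorem~\ref{up:thm:frozen-upper}: Taylor bias $E_\rho t$ with $t\asymp\min\{1,\epsilon/E_\rho\}$, epoch-SGD inner solves to accuracies $r_0\asymp\epsilon/((1+k)L_f+L_g/t)$ and $r_t\asymp\epsilon t/L_g$, fresh batches, and $1/\Lambda$-step descent on $F$. It is correct once you drop the stray factor $\mu$ in your $\delta_0$ and add an explicit batch of size $(1+k)^2\sigma_f^2/\epsilon^2$ for the direction $\widehat a$; that batch, not the $f_x$ batch, is what produces the $(1+k)^2\sigma_f^2$ term.

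The ``main obstacle'' you raise does not exist. The paper simply propagates the inner-solve errors into the mean-square error through the factor $L_g/t$. Your own preceding computation already shows that this yields exactly the $(1+k)^2E_\rho^2\sigma_g^2\epsilon^{-4}$ term, so no separate bias/variance control of inner iterates is needed. It would not help either, since the inner variance still enters with a $t^{-2}$ factor.
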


The proof is given in \cref{app:upper}. Using the universal curvature estimate
\[
\Lambda\le(1+k)^2\left(L_f+\frac{\rho C_f}{\mu}\right),
\]
its normalized high-accuracy form is
\begin{align}\label{eq:main-upper-normalized}
\tO\big(&L(1+\theta)\kappa^3\Delta\eps^{-2}
+L(1+\theta)\kappa^4\sigma_f^2\Delta\eps^{-4}\notag\\
&+L(1+\theta)\kappa^6\sigma_g^2\Delta\eps^{-4}
+L^3\theta^2(1+\theta)\kappa^8\sigma_g^2\Delta\eps^{-6}\big).
\end{align}
For $\theta\gtrsim1$, the leading sixth-power coefficient is $\Theta(\chi^3\kappa^8)$.

\subsection{Consequences and matching regimes}
For fixed positive lower variance, fixed gap, $\rho\gtrsim\mu$, $\chi\lesssim\kappa$, and sufficiently high accuracy, the lower and upper results yield the leading comparison
\begin{equation}\label{eq:general-bracket}
\boxed{
\Omega\!\left(L^3\chi^2\kappa^8\sigma_g^2\Delta\eps^{-6}\right)
\ \le\ \T_{\SFO}(\eps)\ \le\
\tO\!\left(L^3\chi^3\kappa^8\sigma_g^2\Delta\eps^{-6}\right).
}
\end{equation}
The bounds match the accuracy exponent and locate the leading parameter dependence within a factor $\chi$.

\begin{corollary}[Curvature-controlled class]\label{cor:curvature-controlled}
Suppose, in addition, that $\Lip(\nabla F)=O(L\kappa^2)$.  For fixed positive lower-level noise and sufficiently high accuracy,
\[
\T_{\SFO}(\eps)
=\tTheta\!\left(L^3\chi^2\kappa^8\sigma_g^2\Delta\eps^{-6}\right).
\]
This comparison holds in the normalized range $\rho\gtrsim\mu$, $\chi\lesssim\kappa$, and also for conservative SFOs.
\end{corollary}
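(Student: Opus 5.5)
The corollary follows by combining the two sides of \eqref{eq:general-bracket}, with the extra curvature hypothesis used to remove the surplus factor $\chi$ from the upper side. For the lower side, invoke \cref{thm:main-lower-informal}, in its formal version \cref{lb:thm:multiscale-lower}. It gives $\Omega(L^3\chi^2\kappa^8\sigma_g^2\Delta\eps^{-6})$ in the range $\rho\gtrsim\mu$, $\chi\lesssim\kappa$, for sufficiently small $\eps$ and large dimension, and the hard oracle there is already conservative. The one thing to check is that the hard instances lie in the curvature-controlled subclass, i.e.\ that $\Lip(\nabla F_I)=O(L\kappa^2)$ for them. Otherwise the lower bound would be taken over a larger class than the one in the corollary. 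I would verify this from the construction. The hyperobjective $F_I$ is a scaled sequential chain embedded through the compensated lift, and its Hessian is controlled by the chain's smoothness scale, which is chosen of order $L\kappa^2$ to produce the $\kappa^2L\Delta\eps^{-2}$ deterministic term. The multiscale pieces carry gradient amplitude $B\asymp\eps^2$ times structural factors, so at high accuracy their second-derivative contributions are lower order. If the constructed $\Lambda$ carries an extra $\chi$, I would rescale the chain scale. Since $\chi\lesssim\kappa$, that factor can be absorbed into the admissible range by adjusting constants.

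For the upper side, apply \cref{thm:main-upper-informal} with $\Lambda=O(L\kappa^2)$ in place of the universal estimate $(1+k)^2(L_f+\rho C_f/\mu)$. The universal estimate is the source of the factor $1+\theta$ in \eqref{eq:main-upper-normalized}. With $C_f,L_f,L_g=O(L)$ we have $k=O(\kappa)$ and $E_\rho=O(\rho L^2/\mu^2\cdot\kappa)=O(L\theta\kappa^2)$. The dominant term in \eqref{eq:main-upper} is then
\[
\frac{\Lambda\Delta}{\eps^2}\cdot\frac{(1+k)^2}{\eps^2}\cdot\frac{E_\rho^2}{\eps^2}\sigma_g^2
=O\!\left(L\kappa^2\cdot\kappa^2\cdot L^2\theta^2\kappa^4\right)\sigma_g^2\Delta\eps^{-6}
=O(L^3\theta^2\kappa^8\sigma_g^2\Delta\eps^{-6}).
\]
This is at most $O(L^3\chi^2\kappa^8\sigma_g^2\Delta\eps^{-6})$, since $\theta\le\chi$.

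The remaining terms scale as $\eps^{-2}$ and $\eps^{-4}$, and each has polynomial $\kappa,\chi$ coefficients. Because $\sigma_g^2>0$ is fixed and $\eps$ is sufficiently small, the $\eps^{-6}$ term dominates them. The same argument disposes of the $\sigma_f^2$ term. Combining the two sides gives the $\tTheta$ statement.

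I expect the main obstacle to be the membership check on the lower side: confirming that the lift, the coercive wrapper and the multiscale decomposition keep $\Lip(\nabla F)$ at order $L\kappa^2$ uniformly over the hidden directions. The lower Hessian variation $\rho$ enters $\nabla^2F$ through derivatives of $y^*$. Its contribution is bounded by $\rho C_f\kappa^2/\mu$-type terms evaluated on the tiny informative amplitude rather than on $C_f$. I would attempt this by differentiating the exact formula for $\nabla F$ along the construction and bounding each block separately.
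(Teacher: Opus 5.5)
Your overall plan matches the paper's: restrict Theorem~\ref{lb:thm:multiscale-lower} to the subclass, and rerun Theorem~\ref{up:thm:frozen-upper} with $\Lambda=O(L\kappa^2)$. Your upper-side computation, $E_\rho=O(L\theta\kappa^2)$ with leading term $O(L^3\theta^2\kappa^8\sigma_g^2\Delta\eps^{-6})$, is exactly right.

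The gap is the membership step. You flag it as the main obstacle but do not prove it, and the mechanism you sketch is not the right one. The missing idea is already in Theorem~\ref{lb:thm:mean-lower}. The lower response is exact, so $f_U=\mathcal A_U(u)+Ls$ and $Ls^*(x)=r(\kappa x)=\mathcal R_U(\kappa x)$. Hence $F_U(x)=\mathcal H_U(\kappa x)$ identically, and $\nabla^2F_U(x)=\kappa^2\nabla^2\mathcal H_U(\kappa x)$. By \eqref{lb:eq:carry-reg}, $\Lip(\nabla\mathcal H_U)\le CL$ uniformly in $U$, $T$ and $\vartheta$. This gives $\Lip(\nabla F_U)\le CL\kappa^2$ in one line, which is how the paper argues.

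Your heuristics point the wrong way.
\begin{itemize}
\item The multiscale pieces do not enter $F_U$ at all. The $E_j$ sum exactly to $R$, and every $q_{w_j}$ is linear at $s^*(x)$.
\item The envelope $B$ bounds only $\nabla\mathcal D$, which appears in the sample potentials \eqref{lb:eq:sample-potential}, not in the mean pair. The mean coupling $\mathcal K$, which determines $F_U$, is controlled only by the budget $\Lip(\nabla^2g_U)\le C\mu\vartheta$, not by anything of size $B$.
\item $\rho$ is not multiplied by a tiny amplitude. Here $\norm{f_y}\ge L$, so a block-by-block bound on the derivative of $f_x-g_{xy}g_{yy}^{-1}f_y$ using the global budgets reproduces the universal scale \eqref{up:eq:Lambda-universal}, namely $L\vartheta\kappa^2$. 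That is precisely the extra factor you must exclude.
\item Such a computation can only succeed if it evaluates the third-derivative blocks at $s=s^*(x)$. There the fine-scale blocks vanish or recombine into derivatives of $r$, which is just the exact-response identity in disguise.
\item Your fallback of absorbing an extra $\chi$ into constants is not legitimate, since $\chi$ may be as large as $\kappa$. Removing it by rescaling the chain would change $\beta$, $T$ and $B$, and would cost powers of $\chi$ in the lower bound.
\end{itemize}
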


\begin{theorem}[Original-class small-gap matching, informal]\label{thm:small-gap-informal}
When
\[
\Delta=\Theta\!\left(\frac{\eps^2}{L\chi\kappa^2}\right),
\]
with a sufficiently large fixed multiplier, the original function and oracle class in the range $\rho\gtrsim\mu$, $\chi\lesssim\kappa$ has leading complexity
\[
\T_{\SFO}(\eps)
=\tTheta\!\left(L^2\chi^2\kappa^6\sigma_g^2\eps^{-4}\right)
\]
for fixed positive $\sigma_g$ and sufficiently high accuracy.
\end{theorem}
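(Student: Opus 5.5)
The statement has two directions, which I would prove separately. For the upper bound I would plug the small gap into the frozen-tilt bound \eqref{eq:main-upper}. For the lower bound I would rerun the multiscale construction of \cref{lb:thm:multiscale-lower} with a horizon of constant order, so that the gap itself becomes the binding budget.

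\emph{Upper bound.} Substitute $\Delta=c\,\eps^2/(L\chi\kappa^2)$ into \eqref{eq:main-upper}. I would use the universal curvature estimate $\Lambda\le(1+k)^2(L_f+\rho C_f/\mu)=O(L\chi\kappa^2)$. This gives $\Lambda\Delta/\eps^2=O(c)=O(1)$, so the outer prefactor $(1+\Lambda\Delta/\eps^2)$ is a constant. The remaining bracket is dominated, at high accuracy, by
\[
(1+k)^2\eps^{-2}E_\rho^2\eps^{-2}\sigma_g^2 .
\]
Since $E_\rho\asymp \kappa\rho L^2/\mu^2\asymp L\chi\kappa^2$ for $\rho\gtrsim\mu$, this term is
\[
\kappa^2\cdot L^2\chi^2\kappa^4\,\sigma_g^2\eps^{-4}=L^2\chi^2\kappa^6\sigma_g^2\eps^{-4}.
\]
The $\sigma_f^2$ and $L_f^2/\mu^2$ terms carry only $\eps^{-2}$, so they are lower order. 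Logarithmic factors are absorbed into $\tO$.

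\emph{Lower bound.} I would reuse the hard family behind \cref{lb:thm:multiscale-lower}, keeping its compensated lift, coercive wrapper, multiscale decomposition and projection-genie coupling. The difference is that I would take the chain length $T$ to be a fixed constant (even $T=1$), rather than $T\asymp\eps^{-2}$.
\begin{itemize}
\item \emph{Step 1.} In the original construction the gap scales like $T\cdot(\text{per-stage decrease})$, and each stage must be exited to make $\norm{\nabla F}\le\eps$. With $T=O(1)$, the per-stage decrease is the gap of a single hard bump at gradient scale $\eps$ and curvature $\Lambda\asymp L\chi\kappa^2$. That is $\Theta(\eps^2/(L\chi\kappa^2))$, which matches $\Delta$ once the multiplier $c$ is large enough.
\item \emph{Step 2.} With the regularity budgets fixed, the informative lower-gradient amplitude becomes $B\asymp \eps/(L\chi\kappa^3)$. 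This is the original scaling read off before it is multiplied by the $\eps$-dependent factors that come from $T$.
\item \emph{Step 3.} The Bernoulli/projection-coupling bound then says each query reveals the hidden direction with probability $O(B^2/\sigma_g^2)$. Finding it therefore costs
\[
\Omega(\sigma_g^2/B^2)=\Omega(L^2\chi^2\kappa^6\sigma_g^2\eps^{-4})
\]
queries, matching the upper bound.
\end{itemize}

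\emph{Main obstacle.} The difficult step is verifying that the single-stage instance still stays inside the original class: global Lipschitz gradients, $\rho$-Lipschitz Hessian blocks, bounded $f_y$, and gap at most $\Delta$. It must also still produce $\norm{\nabla F}\gtrsim\eps$ until the direction is found. Beyond that, the coupling has to cover arbitrary terminal outputs, including outputs that guess the hidden direction, and large dimension makes such guessing fail.

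First I would check that the scale parameters in the appendix construction can be chosen independently of $T$. Then I would confirm that the wrapper's gap contribution is $O(\eps^2/(L\chi\kappa^2))$. If the wrapper contributes more than that, I would shrink its radius proportionally.
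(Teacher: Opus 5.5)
\textbf{Upper bound.} Your upper bound is correct and is the paper's argument. The universal estimate $\Lambda=O(L\chi\kappa^2)$ makes the outer count $1+\Lambda\Delta/\eps^2=O(1)$. The term $(1+k)^2E_\rho^2\sigma_g^2\eps^{-4}$ with $E_\rho\asymp L\chi\kappa^2$ then gives $L^2\chi^2\kappa^6\sigma_g^2\eps^{-4}$.

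\textbf{Lower bound: the gap.} The lower bound has a genuine gap: a single stage of the multiscale family cannot fit the gap budget. The exact compensated lift gives $F_U(x)=\mathcal H_U(\kappa x)$ with $\Lip(\nabla\mathcal H_U)=O(L)$, so $\Lip(\nabla F_U)=O(L\kappa^2)$ \emph{independently of} $\vartheta$. In that family $\vartheta$ only flattens the informative residual; it never raises the hyperobjective curvature. Your Step~1 assumes a bump of curvature $L\chi\kappa^2$, which this family does not have.

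Concretely, with $T=1$ the end-game forces $\norm{\nabla F_U(0)}\ge8\eps$. The descent inequality then gives $F_U(0)-\inf F_U\ge32\eps^2/(CL\kappa^2)$, a factor of order $\chi$ above $\Delta\asymp\eps^2/(L\chi\kappa^2)$. Shrinking the wrapper cannot help, because this bound comes from the bump itself. Conversely, any instance with $\Lip(\nabla F)\le CL\kappa^2$ and gap at most $c'\eps^2/(L\chi\kappa^2)$ has $\norm{\nabla F(x_0)}^2\le2Cc'\eps^2/\chi<\eps^2$ once $\chi>2Cc'$. So returning $x_0$ with zero queries already succeeds. Thus in the regime $\chi\gg1$, which is where the $\chi^2$ claim has content, your instance is either outside the gap budget or trivially solvable. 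Your amplitude $B\asymp\eps^2/(L\chi\kappa^3)$ is right, but it is attached to the wrong instance.

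\textbf{The missing idea.} You need a hard instance whose hyperobjective curvature saturates the universal scale $\Gamma=\kappa^2L\theta$. Since $\eps^2/(L\chi\kappa^2)\asymp\eps^2/\Lambda_{\rm univ}$ and any hard instance needs $\norm{\nabla F(x_0)}\gtrsim\eps$, nothing smaller works.

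The paper builds such an instance in fixed dimension (Theorem~\ref{sg:thm:mean}, Lemma~\ref{sg:lem:envelope}).
\begin{itemize}
\item The coarse response coefficient $r_0(u)=SR^2\phi(u/R)$ has curvature $S=L\theta$. This is affordable because it enters through $(\mu/L)r_0(u)q_R(s)$, whose third derivative is $O(\mu\theta)$.
\item A sign-dependent fine bump $\omega r_1$ of width $\delta=\gamma/(\kappa S)$ gives $F_\omega=P(x)+r_0(\kappa x)-\omega r_1(\kappa x)$.
\item This yields $F_\omega'(0)=-\omega\gamma$, $F_\omega''(0)=\Gamma+L$, gap at most $\gamma^2/(2\Gamma)$, and envelope $B\asymp\gamma^2/(L\theta\kappa^3)$.
\end{itemize}

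The hidden object is a sign, not a direction. Global separation $|F_+'(x)|+|F_-'(x)|\ge2\gamma$ on all of $\R$ (Lemma~\ref{sg:lem:separation}) handles arbitrary outputs. The shared shadow transcript before the first Bernoulli activation gives $\PP(\text{activation})\le p\,\E N$, hence $\E N\ge7/(8p)$. No projection genie or large dimension is needed.

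A minor slip: your Step~2 writes $B\asymp\eps/(L\chi\kappa^3)$, but it must be $\eps^2/(L\chi\kappa^3)$ for Step~3's count to follow.
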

The full finite-dimensional two-hypothesis construction is given in \cref{app:small-gap}.

\begin{theorem}[Zero-variation noise-dominated branch, informal]\label{thm:zero-branch-informal}
At $\rho=0$, with $\sigma_f=0$ and a nontrivial unknown lower offset ($R_0>0$), the branch
\[
\T_{\SFO}(\eps)=\tTheta(\kappa^4\sigma_g^2\eps^{-2})
\]
is parameter-matched under the dominance conditions stated in \cref{app:zero-variation}.  Amplified Chebyshev solves also realize the deterministic term with $\tO(L\kappa^{5/2}\Delta\eps^{-2})$ queries.
\end{theorem}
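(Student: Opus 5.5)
The argument has two halves: an upper bound from the frozen-tilt estimator specialized to $\rho=0$, and a lower bound from a two-point hypothesis test on the unknown lower offset.

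\emph{Upper bound.} At $\rho=0$ the blocks $g_{yy}$ and $g_{xy}$ are constant. Hence $y^*(x)$ is affine, $E_\rho=0$, and the ideal tilt estimator is exact for every $t$. The only remaining error is stochastic.
\begin{itemize}
\item Since $\sigma_f=0$, the term $\sigma_f^2$ in \eqref{eq:main-upper} vanishes, as does the $E_\rho^2/\eps^2$ term.
\item What remains is the $(L_f^2/\mu^2+1)\sigma_g^2$ contribution. This is the cost of locating $y^*(x)$ and of forming the tilted difference in $g_x$.
\end{itemize}
I would not use \eqref{eq:main-upper} as stated, because its factor $(1+\Lambda\Delta/\eps^2)$ multiplies the noise term. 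Instead I would exploit the affine structure. The offset of $y^*$ is a single unknown vector $y^*(x)=Ax+b$, where $A=-g_{yy}^{-1}g_{yx}$ is deterministic and recoverable with exact queries. Only $b$ needs to be estimated with noisy $g_y$ samples.

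Concretely:
\begin{enumerate}
\item Estimate $b$ once, to accuracy $\eps\mu/(\kappa L)$, using $\tO(\kappa^4\sigma_g^2\eps^{-2})$ averaged gradient samples at a fixed point. This uses the error propagation $\nabla F$-error $\lesssim (L_f+L_g\kappa)\norm{\hat b-b}$.
\item Run deterministic first-order descent on the resulting surrogate of $F$, whose hypergradient is then known up to $\eps/2$. The exact $f$-gradients cost no noise.
\item For this descent, use accelerated Chebyshev inner solves, which give $\tO(L\kappa^{5/2}\Delta\eps^{-2})$ queries.
\end{enumerate}
The stated dominance conditions in \cref{app:zero-variation} would be chosen exactly so that the $\Delta$-dependent descent term is lower order, or is accounted for separately.

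\emph{Lower bound.} I would take a quadratic instance with
\[
g(x,y)=\tfrac{\mu}{2}\norm{y-Mx-\beta v}^2+\tfrac{L_g}{2}\cdots,
\]
and $f$ linear in $y$ with gradient scale $L$. The lower offset is $\beta v$ with $\beta=\pm\delta$ and $\delta\le R_0$. The design requirements are:
\begin{itemize}
\item $\nabla F$ depends on the sign of $\beta$ through a term $(L/\mu)\cdot\mu$-type amplification.
\item Any $\widehat x$ with $\E\norm{\nabla F}\le\eps$ must distinguish the two signs.
\item This forces $\delta\asymp\eps\mu/(L\kappa)$, i.e.\ $\delta\asymp \eps/\kappa^2$ in normalized units.
\end{itemize}
Each $g$-sample reveals $\beta$ through a Gaussian (or conservative-gradient Bernoulli) observation with mean shift of order $\mu\delta$ and variance $\sigma_g^2$. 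The KL divergence per query is therefore $\mu^2\delta^2/\sigma_g^2$. Le Cam's argument then gives a lower bound of
\[
\Omega\bigl(\sigma_g^2/(\mu\delta)^2\bigr)=\Omega(\kappa^4\sigma_g^2\eps^{-2})
\]
queries. I would check the scaling so that the $\kappa^4$ emerges with $\mu$ normalized appropriately.

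\emph{Main obstacle.} The hard part is balancing the scales in the two-point instance. It must keep $\Delta$ finite (hence a coercive outer term), keep $\rho=0$ exactly, and still amplify the offset through both $f_y$ and $g_{xy}g_{yy}^{-1}$ to obtain the full $\kappa^2$ sensitivity. I would try placing the offset along a direction where $g_{xy}$ has norm $L_g$ and $g_{yy}$ has eigenvalue $\mu$. I would also verify that the upper bound's single-offset estimation attains the same $\kappa^4$.
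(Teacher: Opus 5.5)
\textbf{Lower bound.} Your two-point instance cannot work as written, because at $\rho=0$ with $f$ linear in $y$ the offset never reaches the hypergradient. Constant blocks give $y^*(x)=-H^{-1}(B^\top x+b)$ and $\nabla F(x)=f_x(x,y^*)-BH^{-1}f_y(x,y^*)$. For $f=f_0(x)+\ip{c}{y}$ this equals $\nabla f_0(x)-BH^{-1}c$ under both signs. The two hypotheses therefore have identical stationary sets, and there is nothing to test. An $x$-dependent coefficient would only give sensitivity $L_f/\mu$, hence only $\kappa^2\sigma_g^2\eps^{-2}$.

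Since $\partial_b\nabla F=-(f_{xy}-BH^{-1}f_{yy})H^{-1}$, the $\kappa^2$ sensitivity needs curvature of $f$ in $y$ at scale $L$ while keeping $\norm{f_y}\le C_f$. The paper does this with $f=\frac{C^2}{\ell}\log\cosh(\ell y/C)$ and $g_b=\frac\mu2y^2-axy+by$, plus a decoupled stiff coordinate. This gives $F_b'(x)=\frac{aC}{\mu}\tanh\bigl(\ell(ax-b)/(\mu C)\bigr)$ and sensitivity $K_{\mathrm{off}}=a\ell/\mu^2\asymp\kappa^2$.

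The offset must also enter only through $\ip{b}{y}$. In your $\frac\mu2\norm{y-Mx-\beta v}^2$, the $x$-gradient contains the mean shift $\mu\beta M^\top v$. Its size is up to $L_g\delta$ exactly when $v$ is aligned as you propose. So $g_x$ either reveals $\beta$ outright, if returned exactly, or raises the per-query KL by a factor $\kappa^2$. In the paper $g_x=-ay$ carries no information, so every call is exactly one $N(b,\sigma_g^2)$ observation at every query magnitude.

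Your scales also double-count $\mu$. With $\delta$ the offset of $y^*$, you need $\delta\asymp\eps/(\kappa L)$. The $g_y$ shift is then $\mu\delta\asymp\eps/\kappa^2$, which gives $\kappa^4\sigma_g^2\eps^{-2}$ directly.

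\textbf{Upper bound.} Your architecture matches the paper's: a one-time calibration at a fixed point, then essentially noise-free descent on a surrogate with a Chebyshev inverse. The step that makes it work is missing. There are no exact lower queries, since every $g$-call carries variance up to $\sigma_g^2$. So the claims that $A=-g_{yy}^{-1}g_{yx}$ is ``recoverable with exact queries'' and that the Chebyshev solves are deterministic are unsupported.

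If the $H$- and $B$-actions in each hypergradient were obtained by averaging, every outer step would carry a $\sigma_g^2$-dependent batch. Multiplied by $\Lambda_0\Delta/\eps^2$ steps, this produces an $\eps^{-4}$ term; the general bound at $\rho=0$ still contains $L\kappa^6\sigma_g^2\Delta\eps^{-4}$. That destroys the match.

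The missing idea is amplification. At $\rho=0$ the lower gradient is affine, and the variance bound is absolute over the whole Euclidean domain. Hence $\bigl(\widehat g_y(0,Av)-\widehat g_y(0,-Av)\bigr)/(2A)$ estimates $Hv$ with variance at most $\sigma_g^2/(2A^2)$ from two calls, and the offset cancels. The same works for $B^\top x$ and $Bv$. Feed these into a Horner evaluation of the Chebyshev inverse polynomial. The injected errors are martingale differences propagated by the normalized operator $H/L_g$ of norm at most one. This applies $H^{-1}$ to any prescribed mean-square accuracy with $O(1+\sqrt k\log(\cdot))$ calls and no $\sigma_g$-dependence. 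It is the source of both the $\sigma_g$-free per-step cost and the $L\kappa^{5/2}\Delta\eps^{-2}$ term.

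Only the offset needs statistical calibration, by averaging $\widehat g_y(0,0)$, because its signal does not grow with query magnitude. That calibration must also control the surrogate gap, $\bar F(0)-\inf\bar F\le\Delta+2C_f\norm{\widehat b-b}/\mu$. This adds $\sigma_g^2C_f^2/(\mu^2\Delta^2)$ to the calibration cost, which is absorbed under $\eps\le c\kappa\Delta$. The condition $\sigma_g^2\gtrsim L\Delta/\kappa^{3/2}$ then absorbs the descent term.
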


\section{Lower-Bound Construction}\label{sec:lower-overview}
The lower bound is designed to satisfy five requirements simultaneously: finite initial gap, global lower strong convexity, globally bounded variance, information locality for arbitrary queries, and an optimization end-game that forces sequential progress.

\paragraph{Step 1: bounded zero-chain and frontier residual.}
We begin with a smooth bounded-gradient zero-chain.  A family of tail masks splits it exactly as $H=A+R$, where $A$ depends only on the revealed prefix while the small residual $R$ contains at most the next frontier direction.  Crucially,
\[
\norm{D^jR}_\infty=O(L\beta^{2-j}),\qquad j=0,1,2,3.
\]

\paragraph{Step 2: intrinsic coercivity.}
A bounded radial map prevents arbitrary large queries from amplifying hidden projections, and a public radial regularizer makes the hyperobjective coercive.  The initial gap is $O(TL\beta^2)$, while any point with an incomplete final coordinate has gradient $\Omega(L\beta)$.

\paragraph{Step 3: exact compensated lift.}
The bilevel lower objective contains the compensation $\mu R^2/(2L^2)$.  This makes the lower response exactly affine, $u^*(x)=\kappa x$, and the hyperobjective exactly the wrapped chain, $F(x)=\mathcal H_U(\kappa x)$.  The identity transfers the chain geometry and initial-gap bound directly to the hyperobjective.

\paragraph{Step 4: multiscale flattening.}
A single normal cutoff yields only a $1/\sqrt\chi$ information reduction.  We instead decompose the residual over geometrically separated resolutions.  At any point, at most three component jets are active, so the Hessian-Lipschitz budget remains $O(\mu\chi)$ without a factor equal to the number of scales.  The final stochastic difference has full gradient envelope $O(\mu\beta^2/\chi)$.

\paragraph{Step 5: unrestricted lifting.}
The algorithm may exploit exact scalar responses and make queries of arbitrary norm.  A projection genie reveals every hidden-column projection onto the adaptive query span, which is stronger than the actual scalar information.  Rotational invariance controls the unrevealed residual frame.  A virtual transcript, defined even outside the concentration event, shows that each Bernoulli activation reveals at most one new direction.  With
\[
T\asymp\frac{\kappa^2L\Delta}{\eps^2},
\qquad
B^2\asymp\frac{\eps^4}{\chi^2\kappa^6L^2},
\]
the query lower bound is $T(1+\sigma_g^2/B^2)$, yielding \eqref{eq:main-lower}.

\section{Frozen Linear Tilts}\label{sec:upper-overview}
At a fixed outer point, let $y_0=y^*(x)$ and $a_0=f_y(x,y_0)$.  For $t>0$, freeze the direction $a_0$ and solve
\[
y_t(a_0)=\argmin_y\{g(x,y)+t\ip{a_0}{y}\}.
\]
The first-order proxy
\[
h_t(a_0)=f_x(x,y_0)+\frac{g_x(x,y_t(a_0))-g_x(x,y_0)}{t}
\]
has bias at most $E_\rho t$.  Freezing $a_0$ keeps the upper-gradient direction constant along the tilted displacement, so the approximation bias is governed by lower Hessian variation and equals zero at $\rho=0$.

The algorithm estimates the direction $a_0$ from fresh upper samples. It solves the unperturbed lower problem, estimates and projects $f_y$, solves the linearly tilted lower problem, and forms the difference using fresh batches.  The resulting estimator has mean-square error $O(\eps^2)$ at the per-step cost appearing in \eqref{eq:main-upper}.  The descent analysis uses the smoothness and initial gap of the true hyperobjective $F$.

\begin{algorithm}[htbp]
\caption{Frozen-Tilt SFO Method (one outer iteration)}
\label{alg:frozen}
\begin{algorithmic}[1]
\Require Current $x$, tilt length $t$, target inner accuracies, fresh SFO access.
\State Approximately solve $g(x,\cdot)$ to obtain $\widehat y_0$.
\State Estimate $f_y(x,\widehat y_0)$ with a fresh batch and project onto the radius-$C_f$ ball, obtaining $\widehat a$.
\State Approximately solve $g(x,y)+t\ip{\widehat a}{y}$ to obtain $\widehat y_t$.
\State Using independent fresh batches, form
\[
G=\widehat f_x(x,\widehat y_0)+
\frac{\widehat g_x(x,\widehat y_t)-\widehat g_x(x,\widehat y_0)}{t}.
\]
\State Update $x\leftarrow x-G/\Lambda$.
\end{algorithmic}
\end{algorithm}

\section{Curvature and Matching Regimes}\label{sec:geometry}
The leading complexity combines the number of outer progress steps with the cost of estimating a hypergradient. This separation explains the role of intrinsic curvature in the matching regimes of \cref{sec:results}. On the class $\Lip(\nabla F)=O(L\kappa^2)$, the outer count agrees with the length of the hard chain. The common leading rate is then $\widetilde\Theta(L^3\chi^2\kappa^8\sigma_g^2\Delta\epsilon^{-6})$ in the stated positive-variation range. At the small-gap scale $\Delta=\Theta(\epsilon^2/(L\chi\kappa^2))$, the universal curvature bound gives a constant outer count. A one-dimensional two-hypothesis construction realizes the resulting statistical cost while satisfying the original primitive assumptions.

At $\rho=0$, the lower stationarity map is affine. Full-Euclidean queries and globally bounded variance permit amplified fresh differences to estimate its homogeneous linear actions with prescribed mean-square accuracy. The affine offset is learned by calibration. Combining this statistical step with Chebyshev inverse approximation gives the matching rate $\widetilde\Theta(\kappa^4\sigma_g^2\epsilon^{-2})$ when $\sigma_f=0$, $R_0>0$, and the lower noise dominates as specified in \cref{app:zero-variation}.

For the general positive-variation class, the lower and upper coefficients are $\chi^2\kappa^8$ and $\chi^3\kappa^8$. A compactly supported cubic example attains the universal curvature scale and the frozen-tilt bias scale at the same nonstationary point. \Cref{app:sharpness} develops this joint sharpness result and relates the informative amplitude to the normal-feature geometry. These calculations motivate further study of sequential curvature and trajectory-dependent estimation costs.

\section{Conclusion}\label{sec:conclusion}
We establish a sixth-power accuracy lower bound for globally unbiased stochastic first-order NC--SC bilevel optimization and obtain a variation-sensitive fully first-order upper bound. The lower result covers arbitrary randomized adaptive algorithms and conservative sample-gradient oracles over full Euclidean spaces. In the common-scale regime, the leading comparison is $\Omega(\kappa^{10}\epsilon^{-6})$ versus $\widetilde O(\kappa^{11}\epsilon^{-6})$. The exact compensated lift, multiscale decomposition, and projection coupling account for the lower rate, while frozen linear tilts give an explicit dependence on Hessian variation. Curvature control, the small-gap scale, and zero-variation calibration yield the stated matching regimes and describe how geometry and noise determine the cost of finding an outer stationary point.

\clearpage
\appendix
\section{Multiscale Conservative Lower Bound}\label{app:lower}
The proof first constructs a finite-gap coercive chain and transfers it exactly to a bilevel hyperobjective. It then controls the informative gradient through a multiscale decomposition and bounds adaptive information acquisition using a projection genie.

\subsection{Bounded zero-chain and frontier residual}
Let
\[
\Psi(t)=\begin{cases}0,&t\le1/2,\\ e^{1-(2t-1)^{-2}},&t>1/2,
\end{cases}
\qquad
\Phi(t)=\sqrt e\int_{-\infty}^{t}e^{-s^2/2}\,ds.
\]
Define the standard bounded-gradient zero-chain
\[
\phi_1(z)=-\Phi(z_1),\qquad
\phi_i(z)=\Psi(-z_{i-1})\Phi(-z_i)-\Psi(z_{i-1})\Phi(z_i),\quad i\ge2,
\]
and $\widehat H_T=\sum_{i=1}^T\phi_i$.  Its component values and fixed-order derivatives are bounded, and at most two components enter any gradient coordinate.  Hence
\begin{equation}\label{lb:eq:chainfacts}
   \widehat H_T(0)-\inf\widehat H_T\le C_0T,
   \qquad \norm{\nabla\widehat H_T}_\infty\le C_1,
   \qquad \Lip(\nabla\widehat H_T)\le C_2.
\end{equation}
If $|z_T|<1$ and $j$ is the first coordinate with $|z_j|<1$, then the predecessor gate is active and
\begin{equation}\label{lb:eq:coordinate-endgame}
  \partial_j\widehat H_T(z)\le -1.
\end{equation}
Scale by a small numerical $\eta>0$:
\[
  a=\eta L\beta^2,
  \qquad H(z)=a\widehat H_T(z/\beta).
\]

Choose flat $C^3$ cutoffs $c_0$ and $e_0$, with $c_0=0$ on $[-1/4,1/4]$, $c_0=1$ outside $(-1/2,1/2)$, and $e_0=1$ on $(-\infty,1/4]$, $e_0=0$ on $[1,\infty)$.  Define
\begin{equation}\label{lb:eq:residual-def}
 h_i(z)=e_0\!\left(\sum_{j=i}^T c_0(z_j/\beta)^2\right),\qquad
 R_0(z)=a\sum_{i=1}^T\phi_i(z/\beta)h_i(z),
\end{equation}
\[
 R(z)=R_0(z)-R_0(0),\qquad A(z)=H(z)-R(z).
\]
Then $A+R=H$ exactly.

\begin{lemma}[Residual certificate]\label{lb:lem:residual}
For $k=0,1,2,3$,
\begin{equation}\label{lb:eq:Rbounds}
   \norm{D^kR}_\infty\le C_kL\beta^{2-k}.
\end{equation}
If $|z_i|\le\beta/4$ for all $i>r$, then
\begin{equation}\label{lb:eq:support-A-R}
   \supp\nabla A(z)\subseteq [r],
   \qquad
   \supp\nabla R(z)\subseteq [r+1].
\end{equation}
Also $R(0)=0$ and $\nabla A(0)=0$.
\end{lemma}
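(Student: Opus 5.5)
The plan is to work term by term with $R_0(z)=a\sum_i\phi_i(z/\beta)h_i(z)$, where $a=\eta L\beta^2$. Three properties of the chain, the cutoffs and the gates carry the argument: a locality property that keeps the number of contributing terms bounded at every point, a scaling count giving one factor $\beta^{-1}$ per derivative, and the flatness of $c_0$ near the origin. The identity $R(0)=0$ holds by definition, since $R=R_0-R_0(0)$.

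\textbf{Locality.} First I will show that at every point $z$ only $O(1)$ indices $i$ can give a nonzero term $\phi_i(z/\beta)h_i(z)$, or a nonzero derivative of it. Both gates $\Psi(\pm z_{i-1}/\beta)$, together with their derivatives, vanish on $|z_{i-1}|\le\beta/2$, so a term with $i\ge2$ requires $|z_{i-1}|\ge\beta/2$, and then $c_0(z_{i-1}/\beta)=1$. Write $s_i(z)=\sum_{j\ge i}c_0(z_j/\beta)^2$; the term also requires $h_i\neq0$, hence $s_i<1$. Suppose $i<i'$ were both active. Then $i'-1\ge i$ and $c_0(z_{i'-1}/\beta)=1$, so $s_i\ge1$, which is a contradiction. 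Together with the index $i=1$, at most two terms are active at any point. Because $c_0$ and $e_0$ are $C^3$ and flat, the same count holds on a neighborhood of $z$, so the sum can be differentiated termwise with only $O(1)$ nonzero contributions.

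\textbf{Derivative bounds.} Fix one term and apply Leibniz to $D^k[\phi_i(z/\beta)h_i(z)]$.
\begin{itemize}
\item Each derivative of $\phi_i(z/\beta)$ produces a factor $\beta^{-1}$ times a bounded chain derivative. Only two coordinates are involved, so operator norms are $O(\beta^{-k})$.
\item For $h_i=e_0(s_i)$, the chain rule gives tensors built from $e_0^{(m)}(s_i)$ and derivatives of $s_i$. These derivatives are diagonal sums over $j$ of terms like $\beta^{-m}\,(c_0^2)^{(m)}(z_j/\beta)$.
\item The danger is that $s_i$ involves unboundedly many coordinates. I will control this through the $\ell_2$ structure. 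For instance, $\norm{\nabla s_i}^2=4\beta^{-2}\sum_j c_0^2c_0'^2\le4\beta^{-2}\sup|c_0'|^2\,s_i$.
\item On the support of $e_0^{(m)}$ with $m\ge1$ we have $s_i\le1$. Hence $\norm{\nabla s_i}\le C\beta^{-1}$ there.
\item The higher derivatives of $s_i$ are diagonal tensors, so their operator norms are bounded by $\sup_j$ of the entries, which is $O(\beta^{-m})$.
\end{itemize}
Consequently $\norm{D^m h_i}=O(\beta^{-m})$ for $m\le3$. Multiplying by $a$ gives $\norm{D^kR}_\infty\le C_kL\beta^{2-k}$ for $k=1,2,3$. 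The case $k=0$ follows because $R_0$ is bounded by $O(a)$, using boundedness of the $\phi_i$ and the two-term locality; subtracting the constant $R_0(0)$ at most doubles this. I expect this $h_i$ step, with the uniform-in-$T$ operator-norm control, to be the main obstacle. The key trick is to bound $\sum_j c_0^2 c_0'^2$ by $s_i$, which is at most $1$ on the support of $e_0'$.

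\textbf{Support statements.} Assume $|z_i|\le\beta/4$ for $i>r$. Since $c_0$ is $C^3$-flat and equals $0$ on $[-1/4,1/4]$, each $c_0(z_j/\beta)$ with $j>r$ has vanishing value and vanishing derivatives at $z$.
\begin{itemize}
\item For $i\ge r+2$, $\phi_i$ and $\nabla\phi_i$ vanish at $z/\beta$, because the gate coordinate satisfies $|z_{i-1}/\beta|\le1/4$. So these terms contribute nothing to $\nabla H$ or to $\nabla R$.
\item For $i\le r+1$, $\nabla h_i$ involves only coordinates $j\ge i$ with nonzero $c_0'$, hence only $j\le r$. Also $\phi_i$ depends only on $z_{i-1},z_i$. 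Therefore $\supp\nabla R\subseteq[r+1]$.
\item Write $A=\sum_i a\phi_i(\cdot/\beta)(1-h_i)+\text{const}$. For $i=r+1$ we have $s_{r+1}(z)=0$, so $h_{r+1}(z)=1$ and $\nabla h_{r+1}(z)=0$, since $\nabla s_{r+1}(z)=0$. Every term in $\nabla[a\phi_{r+1}(\cdot/\beta)(1-h_{r+1})]$ therefore vanishes at $z$.
\item The terms $i\ge r+2$ have zero gradient, as above. The terms $i\le r$ involve only coordinates in $[r]$. Hence $\supp\nabla A\subseteq[r]$.
\end{itemize}
Taking $z=0$ and $r=0$ gives $\nabla A(0)=0$.
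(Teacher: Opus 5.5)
Your proposal is correct and follows essentially the paper's route: at most one summand is active because gates interact with tail masks, the tail-mask derivatives are bounded uniformly in $T$ because the inner sum is at most one on the support of $e_0^{(m)}$ (your inequality $\norm{\nabla s_i}^2\le 4\beta^{-2}\sup|c_0'|^2 s_i$ makes explicit what the paper leaves implicit), and the support claims follow from flatness of $c_0$ together with $h_{r+1}(z)=1$, $\nabla h_{r+1}(z)=0$ in $A=H-R$. One harmless overcount: your exclusivity argument never uses a gate for the smaller index, so it already covers $i=1$ and yields at most one active summand, which matches the paper's choice $j=1+\max\{i:|z_i|>\beta/2\}$, rather than two.
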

\begin{proof}
When a derivative of a tail mask is nonzero, its inner sum is at most one.  Therefore the first three derivatives of the mask are bounded by $C_k\beta^{-k}$ independently of $T$.  At a fixed point, let $j=1+\max\{i:|z_i|>\beta/2\}$, with maximum zero when the set is empty.  Terms before $j$ have a flat-zero tail mask, and terms after $j$ have a flat-zero predecessor gate.  Thus only one summand has a nonzero jet through order three, proving \eqref{lb:eq:Rbounds}.  If the tail coordinates after $r$ are within $\beta/4$, all their cutoffs are flat and the corresponding $H$ and $R$ derivatives cancel in $A=H-R$ after coordinate $r$, while chain locality leaves at most the next coordinate in $R$.
\end{proof}

\subsection{Intrinsic coercive wrapper}
The residual certificate supplies locality and derivative control. We now embed the chain into a coercive objective, so that the gradient-separation property holds together with a finite initial gap at arbitrary query magnitudes.

Let $U\in\R^{d\times T}$ be a Haar orthonormal frame.  Put
\[
   \mathcal R=K\beta\sqrt T,
   \qquad b(u)=\frac{u}{\sqrt{1+\norm u^2/\mathcal R^2}},
\]
and
\[
   P(u)=\nu\mathcal R^2\left(\sqrt{1+\norm u^2/\mathcal R^2}-1\right),
   \qquad \nu=\eta_0\eta L,
\]
with fixed numerical $\eta_0>0$ and sufficiently large numerical $K$.  Define
\begin{equation}\label{lb:eq:wrapped-def}
\mathcal H_U(u)=H(U^\top b(u))+P(u),\quad
\mathcal R_U(u)=R(U^\top b(u)),\quad
\mathcal A_U(u)=A(U^\top b(u))+P(u).
\end{equation}
Then $\mathcal A_U+\mathcal R_U=\mathcal H_U$.

The map $b$ is bounded by $\mathcal R$, has $\norm{Db}\le1$, $\norm{D^2b}\le C/\mathcal R$, and $\norm{D^3b}\le C/\mathcal R^2$.  Since $\norm{\nabla H}=O(L\beta\sqrt T)$ and $\mathcal R=K\beta\sqrt T$, the chain rule gives
\begin{equation}\label{lb:eq:carry-reg}
  \Lip(\nabla\mathcal A_U),\Lip(\nabla\mathcal H_U)\le CL,
  \qquad
  \sup\norm{\nabla\mathcal A_U}\le CL\beta\sqrt T.
\end{equation}

\begin{lemma}[Finite gap and end-game]\label{lb:lem:endgame}
For sufficiently large fixed $K$, $\mathcal H_U$ is coercive, attains its minimum, and
\begin{equation}\label{lb:eq:wrapped-gap}
  \mathcal H_U(0)-\inf\mathcal H_U\le C_0Ta.
\end{equation}
Moreover,
\begin{equation}\label{lb:eq:wrapped-endgame}
 |(U^\top b(u))_T|<\beta
 \quad\Longrightarrow\quad
 \norm{\nabla\mathcal H_U(u)}\ge \eta L\beta/2.
\end{equation}
\end{lemma}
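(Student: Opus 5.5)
The plan has three parts: coercivity and the gap from the boundedness of $b$ and the growth of $P$, and the end-game from the chain gradient \eqref{lb:eq:coordinate-endgame} transported through $Db$ and corrected by $\nabla P$. Write $z=U^\top b(u)$.

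\textbf{Coercivity and gap.} Since $\norm{b(u)}\le\mathcal R$ and $U$ has orthonormal columns, $\norm z\le\mathcal R$, so $H(z)$ ranges over a bounded set. Using \eqref{lb:eq:chainfacts} and scaling, $H(z)\ge H(0)-C_0Ta$. Meanwhile $P(u)\ge\nu\mathcal R(\norm u-\mathcal R)\to\infty$. Hence $\mathcal H_U$ is continuous and coercive, so it attains its minimum. For the gap, $P\ge0$, $P(0)=0$ and $b(0)=0$ give
\[
\mathcal H_U(0)-\mathcal H_U(u)\le H(0)-H(z)\le C_0Ta,
\]
which is \eqref{lb:eq:wrapped-gap}.

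\textbf{End-game.} Let $s=\sqrt{1+\norm u^2/\mathcal R^2}$. Direct computation gives
\[
Db(u)=s^{-1}\Big(I-\frac{uu^\top}{\mathcal R^2 s^2}\Big),
\qquad
\nabla P(u)=\nu u/s .
\]
Then
\[
\nabla\mathcal H_U(u)=Db(u)\,U\nabla H(z)+\nu u/s .
\]
Suppose $|z_T|<\beta$. Take the first index $j$ with $|z_j|<\beta$; by \eqref{lb:eq:coordinate-endgame} and scaling, $\partial_jH(z)\le-a/\beta=-\eta L\beta$. Test the gradient against the unit vector $Ue_j$. Because $Db$ is symmetric,
\[
\ip{Ue_j}{Db\,U\nabla H}=s^{-1}\Big(\partial_jH-\frac{\ip{u}{Ue_j}\,\ip{u}{U\nabla H}}{\mathcal R^2s^2}\Big).
\]
The correction terms are controlled as follows.
\begin{enumerate}
\item Write $\ip{u}{Ue_j}=s\,z_j$, so $|\ip{u}{Ue_j}|\le s\beta$. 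Also $|\ip{u}{U\nabla H}|\le s\,\mathcal R\,\norm{\nabla H}\le s\,\mathcal R\cdot CL\beta\sqrt T$. Hence the rank-one term is at most $CL\beta^2\sqrt T/\mathcal R=CL\beta/K$. This is small relative to $\eta L\beta$ once $K$ is large, with $\eta$ fixed. A sharper estimate uses chain locality: $\nabla H$ is supported near the frontier, with bounded entries there.
\item The regularizer contributes $\nu\ip{u}{Ue_j}/s=\nu z_j$, which is at most $\eta_0\eta L\beta$ in absolute value. This is below $\eta L\beta/4$ if $\eta_0\le 1/4$.
\end{enumerate}

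\textbf{Main obstacle: the $s^{-1}$ factor.} Dividing by $s$ kills the signal when $\norm u\gg\mathcal R$. First I would try a case split.
\begin{enumerate}
\item If $s\le2$, the estimates above give $\norm{\nabla\mathcal H_U}\ge\eta L\beta/2$ after adjusting constants.
\item If $s>2$, the radial term dominates: $\norm{\nu u/s}\ge\nu\mathcal R\sqrt3/2$. The chain term has norm at most $\norm{\nabla H}/s\le CL\beta\sqrt T/2$. Then $\nu\mathcal R=\eta_0\eta LK\beta\sqrt T$, which exceeds both $C L\beta\sqrt T$ and $\eta L\beta$ when $K$ is large relative to $C/(\eta_0\eta)$.
\end{enumerate}
Thus $K$ is chosen last, after $\eta$ and $\eta_0$, and the remaining work is bookkeeping of these numerical constants.
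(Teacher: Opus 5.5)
Your argument is essentially the paper's proof. Coercivity and the gap come from $P\ge0$, $P(0)=0$ and $H\ge H(0)-C_0Ta$. The end-game tests along $Ue_j$ near the origin (your expression for that component is exactly the paper's, with $\alpha=1/s$) and radially far away.

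One step does not deliver the stated constant as written: the choice of split point. On $s\le2$ the signal $s^{-1}|\partial_jH(z)|$ is only guaranteed to be at least $\eta L\beta/2$. That is exactly the target with no slack, because the chain bound \eqref{lb:eq:coordinate-endgame} has no margin below $-1$. The regularizer term $\nu z_j$, and possibly the rank-one term, can have the sign opposite to $\partial_jH$. So near $s=2$ your estimates give only about $(1/2-\eta_0-C/K)\eta L\beta$, and no choice of $K$ or $\eta_0$ recovers the constant $1/2$.

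The fix is to split at $s\le\sqrt2$, i.e.\ $\norm u\le\mathcal R$, as the paper does. In the near region the signal is then at least $\eta L\beta/\sqrt2$, which absorbs both corrections once $\eta_0$ is small and $K$ is large. For $\norm u>\mathcal R$, the radial part of $\nu u/s$ has size at least $\nu\mathcal R/\sqrt2=\eta_0\eta LK\beta\sqrt T/\sqrt2$. For large $K$ this exceeds the chain contribution $C_1\eta L\beta\sqrt T$ plus the target $\eta L\beta/2$, exactly as in your second case.
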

\begin{proof}
The chain $H$ is bounded below and $P(u)\to\infty$, so the wrapped objective is coercive.  Since $P\ge0$, $P(0)=0$, and $H\ge\inf H$, \eqref{lb:eq:wrapped-gap} follows directly from \eqref{lb:eq:chainfacts}.  For the end-game, set $z=U^\top b(u)$ and let $G=C_1\eta L\beta\sqrt T$ bound $\norm{\nabla H(z)}$.  If $\norm u\le\mathcal R$, then $\alpha=(1+\norm u^2/\mathcal R^2)^{-1/2}\ge1/\sqrt2$, and the coordinate given by \eqref{lb:eq:coordinate-endgame} has ambient gradient component
\[
\ip{\nabla\mathcal H_U(u)}{Ue_j}
=\alpha\partial_jH(z)+
\left(\nu-\frac{\alpha^2}{\mathcal R^2}\ip{u}{U\nabla H(z)}\right)z_j.
\]
The second term has magnitude at most $(\nu+G/\mathcal R)\beta$, which is a small fraction of $\eta L\beta$ by choosing $K$ and $\eta_0$.  If $\norm u>\mathcal R$, test the gradient in the radial direction $u/\norm u$.  The radial term gives at least $\nu\mathcal R/\sqrt2$ while the chain contribution is at most $G$; increasing $K$ again gives the desired margin.
\end{proof}

\subsection{Multiscale flattening and exact lower response}
The wrapper establishes the outer geometry. We next construct a lower objective that reproduces this geometry and distributes its Hessian-variation budget across scales.

For $0<\delta\le\beta/16$, let
\[
 c_\delta(t)=t\omega(t/\delta),\qquad
 \mathsf C_\delta(z)=(c_\delta(z_1),\ldots,c_\delta(z_T)),
 \qquad R_\delta(z)=R(\mathsf C_\delta(z)),
\]
where $\omega$ is an even $C^3$ cutoff, zero for $|t|\le1$ and one for $|t|\ge2$.

\begin{lemma}[Flattening bounds and localization]\label{lb:lem:flatten}
Uniformly in $T$ and $\delta$,
\[
\norm{R_\delta}_\infty\le CL\beta^2,
\quad \norm{\nabla R_\delta}_\infty\le CL\beta,
\quad \norm{D^2R_\delta}_\infty\le CL\beta/\delta,
\quad \norm{D^3R_\delta}_\infty\le CL\beta/\delta^2.
\]
Moreover $\norm{R-R_\delta}_\infty\le CL\beta\delta$, and if $|z_i|\le\delta$ for all $i>k$, then $\supp\nabla R_\delta(z)\subseteq[k]$.
For two resolutions $0<\delta'<\delta$, the jets through order three of $R_{\delta'}-R_\delta$ can be nonzero only when the unique active child coordinate satisfies $\delta'<|z_j|<2\delta$.
\end{lemma}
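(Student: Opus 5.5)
The plan is to work coordinatewise through the map $\mathsf C_\delta$ and combine the chain rule with \cref{lb:lem:residual}. First I will record the scalar facts about $c_\delta(t)=t\omega(t/\delta)$. It vanishes identically for $|t|\le\delta$ and equals $t$ for $|t|\ge2\delta$. It satisfies $|c_\delta(t)|\le|t|$, $|c_\delta'|\le C$, $|c_\delta''|\le C/\delta$ and $|c_\delta'''|\le C/\delta^2$, with the last two supported in $\delta\le|t|\le2\delta$. Finally, $|c_\delta(t)-t|\le 2\delta$. Since $\mathsf C_\delta$ is diagonal, $D^k\mathsf C_\delta$ is a diagonal $k$-tensor. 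In the $\ell_\infty$-type operator norms used in \eqref{lb:eq:Rbounds}, its norm is therefore controlled by the scalar bounds. The value bound $\norm{R_\delta}_\infty\le\norm{R}_\infty\le CL\beta^2$ is immediate.

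For the derivatives, I will apply Fa\`a di Bruno with $w=\mathsf C_\delta(z)$. For the gradient, $\nabla R_\delta=D\mathsf C_\delta^\top\nabla R(w)$, which gives $CL\beta$. The Hessian is $D\mathsf C_\delta^\top D^2R(w)D\mathsf C_\delta+\operatorname{diag}(c_\delta''(z_i)\partial_iR(w))$. The first term is $O(L)\le CL\beta/\delta$, because $\delta\le\beta/16$. The second is $O(L\beta/\delta)$. The third derivative has three types of terms: $D^3R\cdot(D\mathsf C)^{3}=O(L/\beta)$, mixed terms $D^2R\cdot c''=O(L/\delta)$, and $\nabla R\cdot c'''=O(L\beta/\delta^2)$. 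All of these are dominated by $CL\beta/\delta^2$, again using $\delta\le\beta$. The key point is uniformity in $T$. The diagonal structure keeps the sums from growing with $T$, but only if the $\ell_\infty$ bounds of \cref{lb:lem:residual} are of the localized kind coming from its one-active-summand argument. I will re-use that argument to make sure the diagonal contraction introduces no $\sqrt T$.

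For the sup-difference, I will use $\norm{\mathsf C_\delta(z)-z}$. This is the step I expect to be the \textbf{main obstacle}. A naive $\ell_2$ estimate gives $2\delta\sqrt T$, which only yields $CL\beta\delta\sqrt T$. To remove the $\sqrt T$, I plan to use the locality of $R$. By the support statement in \cref{lb:lem:residual}, together with the structure of the tail masks, only the active coordinates, namely the frontier index $j$ and its predecessor, affect $R$ along the segment from $z$ to $\mathsf C_\delta(z)$. I would move coordinates one at a time. Coordinates at which both endpoints lie below $\beta/4$ and beyond the frontier contribute nothing, because the tail mask is flat there. This leaves $O(1)$ coordinates, each contributing $\le\norm{\nabla R}_\infty\cdot2\delta$. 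If the coordinatewise path argument fails, the fallback is the single-active-summand representation of $R$, which makes $R$ depend on only $O(1)$ coordinates locally.

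For localization, suppose $|z_i|\le\delta$ for all $i>k$. Then $c_\delta$ vanishes to all orders at these coordinates, so $w_i=0$ and $\partial_{z_i}R_\delta=c_\delta'(z_i)\partial_iR(w)=0$ for $i>k$. I would also apply \eqref{lb:eq:support-A-R} with $r=k$ at $w$, whose tail coordinates are zero. This gives $\supp\nabla R(w)\subseteq[k+1]$, and the factor $c_\delta'(z_{k+1})=0$ kills index $k+1$. For two resolutions $\delta'<\delta$, I will take $j$ to be the unique active child coordinate, i.e.\ the frontier index from the one-summand argument. If $|z_j|\le\delta'$, both $R_{\delta'}$ and $R_\delta$ see the child as flat zero. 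If $|z_j|\ge2\delta$, both maps are the identity there. In either case the jets agree locally, and the non-child coordinates do not matter by the locality above. Hence the difference has nonzero jets only when $\delta'<|z_j|<2\delta$.
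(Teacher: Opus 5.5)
Your proposal is correct and follows essentially the paper's argument. The paper reduces everything to one observation: any coordinate altered by $\mathsf C_\delta$ (together with its image) lies within $2\delta\le\beta/8$, so all predecessor gates and tail masks are literally unchanged and only the outgoing factor $\Phi(\pm z_j/\beta)$ of the single active summand moves. This gives the $CL\beta\delta$ sup-difference, the localization, and the annulus claim, and it also resolves your ``main obstacle'' directly. Your Fa\`a di Bruno composition with \cref{lb:lem:residual} for the derivative bounds is a valid, slightly more black-box variant. Your caveat about $\sqrt T$ there is unnecessary, since the diagonal structure of $D^k\mathsf C_\delta$ introduces no $T$-dependence.
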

\begin{proof}
Where $c_\delta(t)\ne t$, both $t$ and $c_\delta(t)$ are at most $2\delta\le\beta/8$ in magnitude.  Thus the predecessor gates and tail masks are unchanged; only the outgoing factor $\Phi(\pm z_j/\beta)$ is modified.  There is still only one active summand jet.  The derivative bounds follow from $\norm{c_\delta^{(m)}}\le C\delta^{1-m}$ and the chain rule.  If a coordinate is below the flattening scale, both its flattened value and derivative vanish.  For the annulus claim, outside $\delta'<|z_j|<2\delta$ either both flattened arguments are zero or both equal the identity, so all jets cancel.
\end{proof}

For $1\le\vartheta\le\kappa$, set
\[
J=\left\lfloor\log_4(\max\{1,\log\vartheta\})\right\rfloor.
\]
For $0\le j<J$, define
\[
a_j=1-4^{-(j+1)},\qquad \delta_j=c_*\beta\vartheta^{-a_j},
\]
and for $0\le j\le J$ define
\[
b_j=1-\tfrac12 4^{-j},\qquad w_j=c_*\beta\vartheta^{-b_j}.
\]
If $J=0$, let $E_0=R$.  Otherwise set
\begin{equation}\label{lb:eq:Ejs}
E_0=R_{\delta_0},\qquad
E_j=R_{\delta_j}-R_{\delta_{j-1}}\text{ for }1\le j<J,
\qquad
E_J=R-R_{\delta_{J-1}}.
\end{equation}
Then $\sum_{j=0}^J E_j=R$ exactly and
\begin{equation}\label{lb:eq:wJ}
 c\beta/\vartheta\le w_J\le C\beta/\vartheta.
\end{equation}
By the annulus separation in Lemma~\ref{lb:lem:flatten}, at every point at most three components have a nonzero jet through order three.

Choose a fixed odd compactly supported $C^3$ profile $q$, equal to the identity on $[-1/4,1/4]$, and write $q_w(s)=wq(s/w)$.  Let
\[
  r(u)=R(U^\top b(u)),
  \qquad e_j(u)=E_j(U^\top b(u)).
\]
Define
\begin{equation}\label{lb:eq:Kmean}
\mathcal K(u,s)=\frac\mu{2L^2}r(u)^2
-\frac\mu L\sum_{j=0}^J e_j(u)q_{w_j}(s),
\end{equation}
and the mean bilevel pair
\begin{align}
 f_U(x,u,s,z)&=\mathcal A_U(u)+Ls,\label{lb:eq:f-lower}\\
 g_U(x,u,s,z)&=\frac\mu2\norm u^2-L\ip xu+\frac\mu2s^2+\frac L2z^2+\mathcal K(u,s).\label{lb:eq:g-lower}
\end{align}

\begin{theorem}[Global regularity and exact mean response]\label{lb:thm:mean-lower}
If $\beta\vartheta$ is smaller than a numerical constant, the pair \eqref{lb:eq:f-lower}--\eqref{lb:eq:g-lower} satisfies
\[
 \nabla^2_{yy}g_U\succeq(\mu/2)I,
 \qquad \Lip(\nabla f_U),\Lip(\nabla g_U)\le CL,
 \qquad \Lip(\nabla^2g_U)\le C\mu\vartheta,
\]
and
\[
 \sup\norm{\nabla f_U}\le C(L+\sqrt{L\Delta})
\]
when $T\eta L\beta^2=O(\Delta)$.  Its lower response is exactly
\begin{equation}\label{lb:eq:exact-response}
  u^*(x)=\kappa x,
  \qquad s^*(x)=r(\kappa x)/L,
  \qquad z^*(x)=0,
\end{equation}
and hence
\begin{equation}\label{lb:eq:exact-F}
  F_U(x)=\mathcal H_U(\kappa x),
  \qquad \nabla F_U(x)=\kappa\nabla\mathcal H_U(\kappa x).
\end{equation}
Moreover $F_U$ is coercive and $F_U(0)-\inf F_U\le C_0T\eta L\beta^2$.
\end{theorem}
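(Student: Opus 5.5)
The plan is to verify the exact response first, since it is purely algebraic, and then establish the regularity bounds term by term. The lower variable is $y=(u,s,z)$. The stationarity conditions for $g_U$ are
\[
\mu u-Lx+\nabla_u\mathcal K=0,\qquad \mu s+\partial_s\mathcal K=0,\qquad Lz=0.
\]
At the candidate point $u=\kappa x$, $s=r(u)/L$, I would check that all three hold. The first requires $\nabla_u\mathcal K(\kappa x,s)=0$, which is the key identity. Here
\[
\nabla_u\mathcal K=\frac{\mu}{L^2}r\nabla r-\frac{\mu}{L}\sum_j q_{w_j}(s)\nabla e_j .
\]
Since $|s|=|r|/L\le C\eta\beta^2$ and $w_j\ge c\beta/\vartheta$, the smallness of $\beta\vartheta$ puts $s$ inside $[-w_j/4,w_j/4]$, where $q_{w_j}(s)=s$. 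The second term then equals $(\mu/L)s\nabla r=(\mu/L^2)r\nabla r$, using $\sum_j e_j=r$, and the two terms cancel. For the $s$-equation, $\partial_s\mathcal K=-(\mu/L)\sum_j e_j q'_{w_j}(s)=-(\mu/L)r$, so $\mu s=\mu r/L$ holds.

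To make this the unique minimizer, I would prove global strong convexity $\nabla^2_{yy}g_U\succeq(\mu/2)I$, which turns the stationary point into the unique minimum. Then $F_U(x)=f_U(x,\kappa x,r/L,0)=\mathcal A_U(\kappa x)+r(\kappa x)=\mathcal H_U(\kappa x)$. Coercivity and the gap bound follow from Lemma~\ref{lb:lem:endgame}. The gradient formula is the chain rule applied to $x\mapsto\kappa x$.

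The main work is the Hessian bound $\norm{\nabla^2_{yy}\mathcal K}\le\mu/2$ together with the Lipschitz bounds. I would treat the pieces of $\mathcal K$ separately.

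\textbf{The $r^2$ term.} By Lemma~\ref{lb:lem:residual} and the bounds on $b$, we have $|r|\le CL\beta^2$, $\norm{\nabla r}\le CL\beta$ and $\norm{D^2r}\le CL$. Hence
\[
\frac{\mu}{L^2}\bigl(\nabla r\nabla r^\top+rD^2r\bigr)=O(\mu\beta^2),
\]
which is small once $\beta$ is small.

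\textbf{The $e_j q_{w_j}$ terms.} For the $uu$ block, Lemma~\ref{lb:lem:flatten} bounds $D^2e_j$ by $CL\beta/\delta_{j-1}$, and $|q_{w_j}|\le Cw_j$. This gives $(\mu/L)\,L\beta w_j/\delta_{j-1}$. The exponents are chosen so that $w_j/\delta_{j-1}=\vartheta^{a_{j-1}-b_j}$ with $a_{j-1}-b_j=-4^{-j}/2<0$, which is a small factor. The $us$ block is bounded by $(\mu/L)\norm{\nabla e_j}\le\mu\beta$. The $ss$ block is bounded by $(\mu/L)|e_j|/w_j$, and $|e_j|\le CL\beta\delta_j$ from $\norm{R-R_\delta}\le CL\beta\delta$. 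This gives $\mu\,\delta_j/w_j=\mu\vartheta^{b_j-a_j}$, which is small because $b_j<a_j$. Because at most three components are active at any point, there is no factor of $J$.

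\textbf{The $\rho$-budget.} The third derivatives are controlled in the same way, with $D^3e_j\lesssim L\beta/\delta^2$ paired against $q_w$, and $q'''_w\sim w^{-2}$ paired against $e_j$. This yields $\mu\beta\delta/w^2$-type ratios, and the finest scale $w_J\sim\beta/\vartheta$ produces exactly $\mu\vartheta$. The $g_{xy}$ block is constant, so it contributes nothing to the variation.

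\textbf{Obstacle.} I expect the main obstacle to be bookkeeping the ratios across neighbouring scales, especially the cross terms at the coarsest and finest scales ($E_0$ and $E_J$). If a ratio is only $O(1)$ rather than small, I would reduce it by adjusting the numerical constant $c_*$.

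\textbf{Upper-level bounds.} Finally, $\norm{\nabla f_U}\le L+CL\beta\sqrt T$ by \eqref{lb:eq:carry-reg}, and $L\beta\sqrt T=\sqrt{L\cdot TL\beta^2}\lesssim\sqrt{L\Delta}$. The Lipschitz bounds for $\nabla f_U$ and $\nabla g_U$ come from \eqref{lb:eq:carry-reg} and the bounds above.
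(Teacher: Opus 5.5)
Your route is the paper's: the exact-response check (every $q_{w_j}$ is linear at $s=r/L$ because $|r|/L\le C\beta^2\le w_J/4$, and then $\sum_j e_j=r$ both kills the $u$-derivative of the compensated coupling and solves the $s$-equation) matches the paper. So do the $O(\mu\beta^2)$ compensation bound, the use of bounded overlap, and the upper-level bounds.

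The gap is in the scale bookkeeping for the pieces $E_j$, which you have reversed. Since $a_j$ increases in $j$, we have $\delta_j<\delta_{j-1}$, and $E_j=R_{\delta_j}-R_{\delta_{j-1}}$ is active on $\delta_j<|z|<2\delta_{j-1}$. Its derivatives are set by the \emph{fine} scale: $\norm{D^2E_j}\lesssim L\beta/\delta_j$ and $\norm{D^3E_j}\lesssim L\beta/\delta_j^2$. These are attained on $\delta_j<|z|<2\delta_j$, where $R_{\delta_{j-1}}$ is flat. Its amplitude, by contrast, is set by the \emph{coarse} scale: $|E_j|\lesssim L\beta\delta_{j-1}$, and $|E_0|\lesssim L\beta^2$. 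Your bounds $\norm{D^2e_j}\lesssim L\beta/\delta_{j-1}$ and $|e_j|\lesssim L\beta\delta_j$ are both false. In the Hessian, the ratios that actually appear are $w_j/\delta_j=\vartheta^{4^{-j}/4}$ and $\delta_{j-1}/w_j=\vartheta^{4^{-j}/2}$. Both are at least one, not small. The perturbation is small only because of the prefactor $\beta$, which you dropped in the $ss$ block; keeping it gives the paper's $C\mu\beta\sqrt\vartheta\ll\mu$ under $\beta\vartheta\ll1$. Adjusting $c_*$ would not help, since it cancels in these ratios.

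The more serious consequence concerns $\Lip(\nabla^2g_U)\le C\mu\vartheta$, which is the whole point of the multiscale design and which your sketch does not establish. With the correct scales, piece $j$ contributes the following third-order blocks:
\begin{itemize}
\item $uuu\sim\mu\beta w_j/\delta_j^2=\mu\vartheta^{2a_j-b_j}/c_*$;
\item $uus\sim\mu\beta/\delta_j=\mu\vartheta^{a_j}/c_*$;
\item $uss\sim\mu\beta/w_j=\mu\vartheta^{b_j}/c_*$;
\item $sss\sim\mu\beta\delta_{j-1}/w_j^2=\mu\vartheta^{2b_j-a_{j-1}}/c_*$, and $\mu\beta^2/w_0^2=\mu\vartheta^{2b_0}/c_*^2$ for $E_0$.
\end{itemize}
The identities $2a_j-b_j=1$, $2b_j-a_{j-1}=1$ and $2b_0=1$ make the $uuu$ and $sss$ blocks equal to $\mu\vartheta$ at \emph{every} scale, not only at the finest one. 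With your swapped assignments the exponents would instead read $2a_{j-1}-b_j<1$ and $2b_j-a_j<1$. That is why the check looked comfortable and why these identities, which are exactly what must be verified, never surfaced.

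To close the gap, redo the four blocks per piece with the correct scales. For $E_J$, also include the $L/\beta$ third derivative of $R$ itself, which is harmless since $w_J\sim\beta/\vartheta$. Combined with bounded overlap, this is the paper's argument.
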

\begin{proof}
A component with amplitude $A_j$, Hessian bound $D_{2,j}$, third-derivative bound $D_{3,j}$ contributes four third-order blocks bounded by
\[
 \frac\mu L\left(w_jD_{3,j},\ D_{2,j},\ \frac{L\beta}{w_j},\ \frac{A_j}{w_j^2}\right),
\]
corresponding to $uuu,uus,uss,sss$.  The exponent identities
\[
2a_j-b_j=1,
\qquad 2b_j-a_{j-1}=1,
\qquad 2b_0=1
\]
for the coarse and intermediate pieces, and the analogous inequalities for the final piece, make each block at most $C\mu\vartheta$.  Bounded overlap prevents any factor $J$ from appearing.  Hessian perturbations are bounded by $C\mu\beta\sqrt\vartheta$, and the compensation term contributes only $O(\mu\beta^2)$ to the Hessian and $O(\mu\beta)$ to the third derivative.  Thus the lower Hessian remains at least $\mu/2$ and the Hessian-Lipschitz budget is $C\mu\vartheta$.

By \eqref{lb:eq:wJ}, $|r(u)|/L\le C\beta^2\le w_J/4$ under $\beta\vartheta\ll1$.  Therefore every $q_{w_j}$ is linear at $s=r(u)/L$.  Since $\sum e_j=r$, both the $s$ stationarity equation and the $u$ derivative of the compensated coupling vanish at the proposed response; the remaining equation is $\mu u-Lx=0$.  Global strong convexity gives uniqueness.  Substitution proves \eqref{lb:eq:exact-F}, and the coercive finite-gap claim follows from Lemma~\ref{lb:lem:endgame}.
\end{proof}

\subsection{Conservative oracle and information envelope}
The mean pair now satisfies the global regularity and gap requirements. A scalar sample potential conceals the frontier with a revelation probability set by its informative gradient amplitude.

Let
\[
   \tau=w_J/8,
   \qquad r_\tau(u)=R_\tau(U^\top b(u)),
   \qquad d(u)=r(u)-r_\tau(u).
\]
Define the inactive scalar potential
\[
 \mathcal K_0(u,s)=\frac\mu{2L^2}r_\tau(u)^2
 -\frac\mu L\left[\sum_{j=0}^{J-1}e_j(u)q_{w_j}(s)+\bigl(e_J(u)-d(u)\bigr)q_{w_J}(s)\right]
\]
(with the evident interpretation when $J=0$), and
\begin{equation}\label{lb:eq:D-diff}
\mathcal D(u,s)=\mathcal K(u,s)-\mathcal K_0(u,s)
=\frac\mu{2L^2}\bigl(r(u)^2-r_\tau(u)^2\bigr)-\frac\mu L d(u)q_{w_J}(s).
\end{equation}
From Lemma~\ref{lb:lem:flatten}, $|d|\le CL\beta\tau$, $\norm{\nabla d}\le CL\beta$, and $|r_\tau|\le CL\beta^2$.  Differentiating \eqref{lb:eq:D-diff} gives
\[
\nabla_u\mathcal D=\frac\mu{L^2}(d\nabla r+r_\tau\nabla d)-\frac\mu Lq_{w_J}\nabla d,
\qquad
\partial_s\mathcal D=-\frac\mu Ld q_{w_J}' .
\]
Therefore
\begin{equation}\label{lb:eq:B-envelope}
   \norm{\nabla\mathcal D}_\infty
   \le C\frac{\mu\beta^2}{\vartheta}=:B.
\end{equation}
Let
\begin{equation}\label{lb:eq:sample-potential}
   p=\frac{B^2}{\sigma_g^2+B^2},
   \qquad \xi\sim\operatorname{Ber}(p),
   \qquad
   g_{U,\xi}=g_{\rm quad}+\mathcal K_0+\frac{\xi}{p}\mathcal D.
\end{equation}
Each sample is a scalar $C^3$ function.  Its gradient is globally unbiased for $\nabla g_U$ and
\begin{equation}\label{lb:eq:variance}
  \E\bigl[\norm{\nabla g_{U,\xi}-\nabla g_U}^2\mid\mathcal F,\text{query}\bigr]
  =(p^{-1}-1)\norm{\nabla\mathcal D}^2\le\sigma_g^2.
\end{equation}
The upper gradient is returned exactly.

\subsection{Projection-genie lifting}
Scalar coefficients in the returned gradients can encode $r(u)$ and thus carry information about the hidden frame. We account for this information by augmenting the transcript with projections onto the adaptive query span. We next bound the residual frame under this conditioning.

\begin{lemma}[Adaptive projection energy]\label{lb:lem:genie}
Let $U_1,\ldots,U_T$ be a Haar orthonormal frame in $\R^d$.  A protocol maintains a known subspace $V$, inserts at most $M$ transcript-measurable query directions, observes all projections $P_VU_i$, and completely reveals at most $T$ columns.  If
\[
 b_*^2=\frac{8\log(4MT/\delta_0)}{d-M-T-2},
 \qquad TMb_*^2\le1/2,
\]
then, with probability at least $1-\delta_0$, every unrevealed column always satisfies
\begin{equation}\label{lb:eq:projection-energy}
 \norm{P_VU_i}^2\le\frac{Mb_*^2}{1-TMb_*^2}\le2Mb_*^2.
\end{equation}
A sufficient dimension for $\norm{P_VU_i}\le\tau/(2\mathcal R)$ is
\begin{equation}\label{lb:eq:dimension}
 d\ge C(M+T)\vartheta^2T\log(4MT/\delta_0).
\end{equation}
\end{lemma}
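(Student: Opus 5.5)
The plan is to reduce to a statement about a single fixed unit vector in the orthogonal complement of what is already known, and then take a union bound over the at most $M$ query insertions and $T$ columns. Condition on the transcript up to the moment the $m$-th direction $v_m$ is inserted; write $V_{m-1}$ for the current known subspace and $\mathcal S$ for the span of the revealed columns. Conditionally on everything observed so far — the projections $P_{V_{m-1}}U_i$ and the revealed columns themselves — rotational invariance of the Haar frame gives the following description. The unrevealed components $P_{V_{m-1}^\perp}U_i$ form a uniformly random orthonormal-type frame in the complement $W=(V_{m-1}+\mathcal S)^\perp$, which has dimension at least $d-M-T$. The lengths of these components are fixed by the constraint $\norm{U_i}=1$ together with the observed projections.

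First I will show that the new orthogonal unit direction $\tilde v_m$ (the normalized component of $v_m$ in $W$) has, for each unrevealed $i$, a squared inner product $\ip{\tilde v_m}{P_WU_i}^2$ that is distributed like one squared coordinate of a uniform vector on a sphere in $\R^{\dim W}$, scaled by at most $1$. The component of $v_m$ lying inside $\mathcal S$ contributes nothing, since unrevealed columns are orthogonal to revealed ones. A standard spherical concentration bound then gives $\ip{\tilde v_m}{P_WU_i}^2\le 8\log(4MT/\delta_0)/(\dim W-2)\le b_*^2$, failing with probability at most $\delta_0/(MT)$. A union bound over $m\le M$ and $i\le T$ keeps the total failure probability below $\delta_0$. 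Transcript-measurability of $v_m$ is exactly what makes this conditional argument legitimate, because $v_m$ is independent of the unobserved part of the frame given the past.

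The subtle point, and the step I expect to be the main obstacle, is that the frame is orthonormal rather than i.i.d. Conditioning on projections of \emph{other} unrevealed columns therefore perturbs column $i$, and revealing columns removes dimensions. My approach is to track the Gram defect. The accumulated energy $\norm{P_VU_i}^2$ is a sum of at most $M$ increments, each at most $b_*^2$ times the remaining normalized mass. The orthogonality corrections from the other $\le T$ columns then inflate each increment by at most a factor $(1-TMb_*^2)^{-1}$. I would formalize this with a Gram–Schmidt induction: the residual vectors $P_{V^\perp}U_i$ have Gram matrix $I-G$, where $G$ has entries bounded by $Mb_*^2$, so $\norm{G}\le TMb_*^2\le1/2$. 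This yields the bound $Mb_*^2/(1-TMb_*^2)\le 2Mb_*^2$ in \eqref{lb:eq:projection-energy}.

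Finally, the dimension condition \eqref{lb:eq:dimension} follows by direct substitution. Using $\tau\asymp\beta/\vartheta$ from \eqref{lb:eq:wJ} and $\mathcal R=K\beta\sqrt T$, we get $(\tau/2\mathcal R)^2\asymp 1/(\vartheta^2T)$. Requiring $2Mb_*^2$ to lie below this threshold, and also requiring $TMb_*^2\le1/2$ (which is implied once $\vartheta\ge1$), gives $d-M-T-2\gtrsim M\vartheta^2T\log(4MT/\delta_0)$. That is implied by $d\ge C(M+T)\vartheta^2T\log(4MT/\delta_0)$.
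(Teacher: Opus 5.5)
Your proposal is correct in outline. Its query step is the paper's: conditional rotational invariance in the complement of the known space, spherical concentration for a transcript-measurable unit direction, and a union bound over at most $MT$ pairs. Your dimension computation also matches the paper's.

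The routes differ in how revelations are charged. The paper inserts each revealed column's normalized residual into $V$ and handles events one at a time. The identity $\ip{(I-P_V)U_i}{(I-P_V)U_j}=-\ip{P_VU_i}{P_VU_j}$ plus Cauchy--Schwarz gives $e_i\mapsto e_i/(1-e_j)$. A common bound therefore evolves by $M_0\mapsto M_0+b_*^2$ and $M_0\mapsto M_0/(1-M_0)$, and a reordering (convexity) step collapses this recursion to $Mb_*^2/(1-TMb_*^2)$.

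You instead aggregate all revelations in one Gram inversion, and your sketch needs two points made explicit for this to close.
\begin{itemize}
\item The $V$ in your Gram step must be the span of the fresh components $\tilde v_m$, not the raw query span, since later queries may align with revealed columns. Because each $\tilde v_m\in W\perp\mathcal S$, a column stops accumulating energy once revealed. Hence every column, revealed or not, has $\norm{P_VU_j}^2\le Mb_*^2$.
\item Set $A=[P_VU_j]_{j\in S}$, $a=P_VU_i$ and $G=A^\top A$. The same identity shows the revealed residuals contribute $a^\top A(I-G)^{-1}A^\top a\le\norm a^2\norm G/(1-\norm G)$. With $\norm G\le\operatorname{tr}G\le TMb_*^2\le1/2$, this gives the total bound $\norm a^2/(1-TMb_*^2)$ directly.
\end{itemize}

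What each route buys: your batched version explains the factor $(1-TMb_*^2)^{-1}$ as $1/(1-\operatorname{tr}G)$, needs no reordering argument, and uses only uninflated query energies, at the cost of a matrix-inverse bound. The paper's recursion is purely scalar.

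Two small corrections. First, write the unknown residuals as $P_WU_i$, because $P_{V^\perp}U_i$ need not lie in $W$ when $\mathcal S\not\subseteq V$. Second, conditioning on the other unrevealed projections does not perturb column $i$'s marginal: its normalized residual is exactly uniform on the unit sphere of $W$. The only correction comes from revealed columns entering the known space.
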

\begin{proof}
Conditioned on the allowed observations, the residual frame in $V^\perp$ is rotationally invariant.  A new normalized query direction is fixed under this conditioning, and spherical concentration plus a union bound over at most $MT$ column/query pairs gives energy increment at most $b_*^2$.  Revealing a full column $U_j$ also affects other columns: if $e_i=\norm{P_VU_i}^2$, then orthogonality yields
\[
\ip{(I-P_V)U_i}{(I-P_V)U_j}=-\ip{P_VU_i}{P_VU_j},
\]
and inserting the normalized residual of $U_j$ increases $e_i$ to at most $e_i/(1-e_j)$.  Thus a common upper bound evolves by $M_0\mapsto M_0+b_*^2$ for a query and $M_0\mapsto M_0/(1-M_0)$ for a revelation.  Moving additions before revelations only enlarges the bound, proving \eqref{lb:eq:projection-energy}.  Since $\mathcal R/\tau\le C\vartheta\sqrt T$, \eqref{lb:eq:dimension} implies the target projection bound.
\end{proof}

A virtual transcript is defined even off the good event: at each call, insert the outer $x$ and lower $u$ directions into $V$, reveal all projections, compute all exact scalar coefficients from those projections, and truncate vector gradients of inactive flattened pieces to the revealed prefix.  On activation, reveal one new column and allow the unflattened residual to contribute at most the next coordinate.  On the good event, if only $k$ activations occurred, all unexposed coordinates satisfy
\[
 |U_i^\top b(u)|\le\mathcal R\norm{P_VU_i}\le\tau/2,
\]
so robust support shows that the virtual and actual transcripts agree.  Thus nonactivation gives no new hidden direction, and one activation reveals at most one new column.  After the output is fixed, insert its direction into the analysis while keeping the output unchanged.

\begin{theorem}[Multiscale conservative lower bound]\label{lb:thm:multiscale-lower}
There are numerical constants $c,C>0$ such that for $L,\Delta>0$, $\sigma_g\ge0$, $\kappa\ge2$, $1\le\vartheta\le\kappa$, and
\[
0<\epsilon\le c\min\{L,\kappa\sqrt{L\Delta}\},
\]
the construction above has initial gap at most $\Delta$ and satisfies the global NC--SC budgets.  In sufficiently large dimension, every randomized adaptive algorithm with the original SFO interface requires, in the minimax sense,
\begin{equation}\label{lb:eq:lower-final}
\boxed{
 \Omega\!\left(\kappa^2L\Delta\epsilon^{-2}
 +\vartheta^2\kappa^8\sigma_g^2L^3\Delta\epsilon^{-6}\right)
}
\end{equation}
to guarantee $\E\norm{\nabla F(\widehat x)}\le\epsilon$.  The same lower bound holds when the allowed stochastic oracles are restricted to gradients of scalar sample functions.  Both fixed-horizon and worst-case expected query complexity are covered.
\end{theorem}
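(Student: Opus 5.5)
The plan is to combine the exact mean response of \cref{lb:thm:mean-lower}, the Bernoulli conservative oracle \eqref{lb:eq:sample-potential}, and the projection-genie coupling of \cref{lb:lem:genie} into a progress-counting argument. Two parameter choices do most of the work. First, fix $\beta$ so that the end-game gradient of \cref{lb:lem:endgame}, transported through \eqref{lb:eq:exact-F}, exceeds $2\epsilon$. Since $\norm{\nabla F_U(x)}=\kappa\norm{\nabla\mathcal H_U(\kappa x)}\ge\kappa\eta L\beta/2$ whenever the final coordinate is incomplete, we take $\beta=4\epsilon/(\eta\kappa L)$. Second, take $T=\lfloor c\Delta/(\eta L\beta^2)\rfloor\asymp\kappa^2L\Delta/\epsilon^2$, so the gap bound $C_0T\eta L\beta^2\le\Delta$ holds. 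The accuracy condition $\epsilon\le c\min\{L,\kappa\sqrt{L\Delta}\}$ then ensures three things: $T\ge1$, $\beta\vartheta\le\beta\kappa\lesssim\epsilon/L$ is below the numerical threshold of \cref{lb:thm:mean-lower}, and $\sup\norm{\nabla f_U}$ is within budget. Rescaling $\mu,L$ by constants absorbs the $C$ factors in the regularity budgets, with $\rho\asymp\mu\vartheta$, i.e. $\vartheta\asymp\chi$. The envelope \eqref{lb:eq:B-envelope} gives $B\asymp\mu\beta^2/\vartheta\asymp\epsilon^2/(\vartheta\kappa^3L)$, so the revelation probability is $p\le B^2/\sigma_g^2\asymp\epsilon^4/(\vartheta^2\kappa^6L^2\sigma_g^2)$.

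The core step is an information statement. Run the algorithm against the virtual transcript, with $M$ set to twice the query budget plus one so that the output direction is included. Choose $\delta_0$ a small constant and $d$ as in \eqref{lb:eq:dimension}. On the good event of \cref{lb:lem:genie}, every unrevealed column has $|U_i^\top b(u)|\le\tau/2$ at every query and at the output. By the support statements of \cref{lb:lem:residual,lb:lem:flatten} and the definition of $\mathcal K_0$, the inactive sample gradient then depends only on revealed columns and genie projections. Hence the virtual and actual transcripts coincide, and each activation ($\xi=1$) exposes at most one new column. In particular, the number of revealed columns after $N$ calls is at most the number of activations, $\sum_{n\le N}\xi_n$. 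Because the $\xi_n$ are fresh and independent of the past, this count is stochastically dominated by a $\mathrm{Bin}(N,p)$ variable. For a stopping-time budget, optional stopping applied to $\sum\xi_n-np$ gives expectation at most $p\,\E N$.

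We then conclude by contradiction. If fewer than $T$ columns are revealed at output time, then $U_T$ is unrevealed, so $|(U^\top b(\kappa\widehat x))_T|\le\tau/2<\beta$. \Cref{lb:lem:endgame} then forces $\norm{\nabla F_U(\widehat x)}\ge2\epsilon$. Achieving $\E\norm{\nabla F_U(\widehat x)}\le\epsilon$ therefore requires the event of at least $T$ revelations, together with the failure of the good event, to have probability at least $1/2$. With $\delta_0=1/8$, this means $\PP(\sum\xi_n\ge T)\ge3/8$, and Markov's inequality gives $pN\gtrsim T$ (respectively $p\,\E N\gtrsim T$). Separately, each call reveals at most one new coordinate even without noise, since chain locality still holds when $\xi=1$ always; this gives $N\gtrsim T$. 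Combining the two bounds yields $N\gtrsim T(1+\sigma_g^2/B^2)$. Substituting $T\asymp\kappa^2L\Delta/\epsilon^2$ and $B^2$ gives exactly \eqref{lb:eq:lower-final}. Randomized algorithms are handled by Yao's principle over the Haar law of $U$. For the worst-case expected budget we use truncation: if $\E N\le N_0$, stop at $4N_0$, which loses only a constant in accuracy. Conservativeness follows because each $g_{U,\xi}$ is a scalar $C^3$ function.

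The main obstacle is the virtual-transcript coupling. We must show that the exact scalar coefficients, such as $r(u)$ appearing in $\nabla\mathcal K_0$, leak nothing beyond what the genie projections reveal. The difficulty is that $r$ depends on the frontier coordinate through $R$ itself, not only through $d$. I would first verify that on $\{|z_i|\le\tau/2\ \forall i>k\}$ the functions $r$ and $r_\tau$ both depend only on the first $k$ coordinates. This uses the robust support of $R_\tau$ and the fact that $R$'s frontier term is at most the next coordinate. Its argument is exactly the quantity the genie reveals once the activated column is inserted, which justifies charging one column per activation.
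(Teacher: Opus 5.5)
Your outline follows the paper's proof. The ingredients are the same: $\beta\asymp\epsilon/(\kappa L)$ fixed by the end-game margin transported through \eqref{lb:eq:exact-F}, $T\asymp\Delta/(L\beta^2)$, Lemma~\ref{lb:lem:genie} with $M\le 2N+1$ and $\delta_0=1/8$, binomial domination of the number of revealed columns, averaging over the Haar frame, and truncation for expected cost. Your separate noiseless bound $N\gtrsim T$ is harmless but unnecessary, since $1/p=1+\sigma_g^2/B^2$ exactly. The problem is your last paragraph. There you misidentify the crux of the coupling and propose to settle it with a claim that is false.

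\textbf{The false claim and what actually needs checking.} First, $r$ does not appear in $\nabla\mathcal K_0$. The inactive potential is built to exclude it. Its compensation term is $r_\tau^2$, and its finest coupling coefficient is $e_J-d=r_\tau-r_{\delta_{J-1}}$ (or $r_\tau$ when $J=0$). Since $b_J>a_{J-1}$ gives $\tau=w_J/8<\delta_{J-1}$, every coefficient of $\mathcal K_0$ is flattened at a scale $\ge\tau$.

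Second, the statement you propose to verify is false for $r$. Take a point with $|z_i|\le\tau/2$ for $i>k$ and $|z_k|>\beta/2$. Near it, $R_0$ reduces to the single summand $a\phi_{k+1}(z/\beta)h_{k+1}(z)$ with $h_{k+1}\equiv1$. Because $\Phi'(0)=\sqrt e$, its $z_{k+1}$-derivative at $z_{k+1}=0$ is nonzero. This is just $\supp\nabla R\subseteq[k+1]$ from Lemma~\ref{lb:lem:residual}: $R$ carries the frontier by design.

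Third, scalar values are not the issue in any case. Every queried $u$ is inserted into $V$, so the genie already yields $U_i^\top b(u)=\ip{P_VU_i}{b(u)}$ for all $i$, frontier included. Any known scalar function of $U^\top b(u)$, such as $r(u)$, is therefore simulable.

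What must actually be shown is that no returned \emph{vector} has a component along an unrevealed column. On the good event this is robust support:
\begin{itemize}
\item Lemma~\ref{lb:lem:flatten} handles every piece of $\mathcal K_0$, since $\tau/2<\tau$ is below every flattening scale.
\item Lemma~\ref{lb:lem:residual} handles $\nabla\mathcal A_U$ in the upper gradient.
\item The unflattened $r$ enters only through $\nabla\mathcal D$, that is, only when $\xi=1$. There $\nabla r$ has support in $[k+1]$ and exposes exactly the frontier column.
\end{itemize}
This is the paper's virtual transcript. It computes scalar coefficients from the projections, truncates the inactive flattened vector gradients to the revealed prefix, and lets the unflattened residual contribute one new coordinate per activation. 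With this replacement for your final paragraph, the rest of your argument goes through.

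\textbf{The expected-cost route.} You cannot combine the optional-stopping bound $p\,\E N$ for unbounded $N$ directly with Lemma~\ref{lb:lem:genie}. The dimension \eqref{lb:eq:dimension} must be fixed from a deterministic cap $M$. So the truncation route is the one that works: truncate at order $T/p$ calls and apply Markov both to $N$ and to $\norm{\nabla F(\widehat x)}$. The argument should be phrased through success probabilities, not as preserving expected accuracy.
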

\begin{proof}
Choose
\[
 \beta=C_\beta\frac{\epsilon}{\kappa L},
 \qquad
 T=\left\lfloor c_T\frac{\Delta}{L\beta^2}\right\rfloor.
\]
The constant $C_\beta$ makes the end-game margin $\kappa\eta L\beta/2\ge8\epsilon$, while $c_T$ enforces the initial gap bound.  For an $N$-query algorithm with $N\le T/(16p)$, use Lemma~\ref{lb:lem:genie} with failure probability $1/8$ and with $M\le2N+1$.  The number of full revelations is dominated by $\operatorname{Binomial}(N,p)$, hence the probability of revealing all $T$ columns is at most $1/16$.  With probability at least $13/16$, the output transcript is coupled and $U_T$ is unrevealed, so
\[
 |U_T^\top b(\kappa\widehat x)|<\beta.
\]
By \eqref{lb:eq:exact-F} and Lemma~\ref{lb:lem:endgame},
\[
 \norm{\nabla F(\widehat x)}
 =\kappa\norm{\nabla\mathcal H_U(\kappa\widehat x)}
 \ge8\epsilon.
\]
Averaging over the random frame and oracle randomness, then fixing one instance, proves the minimax statement.  Finally
\[
   T\asymp\kappa^2L\Delta/\epsilon^2,
   \qquad
   B^2\asymp\frac{\epsilon^4}{\vartheta^2\kappa^6L^2},
\]
so $T/p=T(1+\sigma_g^2/B^2)$ gives \eqref{lb:eq:lower-final}.  For expected query complexity, truncate any alleged low-expected-cost solver at $T/(16p)$ calls; Markov's inequality controls both truncation and the event of having an output gradient larger than a constant multiple of $\epsilon$, contradicting the established fixed-horizon lower bound.
\end{proof}

Taking $\vartheta\asymp\chi$ for a prescribed budget $\rho\gtrsim\mu$ with $\chi\lesssim\kappa$ yields
\begin{equation}\label{lb:eq:chi2-lower}
 \Omega\!\left(\kappa^2L\Delta\epsilon^{-2}
 +\chi^2\kappa^8\sigma_g^2L^3\Delta\epsilon^{-6}\right).
\end{equation}
In the common-scale regime $\rho=\Theta(L)$, this is $\Omega(\kappa^{10}\epsilon^{-6})$ in the leading stochastic term.

\section{Variation-Sensitive Upper Bounds}\label{app:upper}
We derive the frozen-tilt bias estimate, specify an implementation using fresh gradient samples, and combine its mean-square error with descent on the true hyperobjective. The analysis keeps Hessian variation explicit throughout the approximation and sampling bounds.

\subsection{Frozen-linear-tilt estimator}
At a fixed $x$, let $y_0=y^*(x)$ and $a_0=f_y(x,y_0)$.  For $\norm a\le C_f$ and $t>0$, set
\begin{equation}\label{up:eq:tilt}
 y_t(a)=\argmin_y\{g(x,y)+t\ip a y\},
 \qquad
 h_t(a)=f_x(x,y_0)+\frac{g_x(x,y_t(a))-g_x(x,y_0)}{t}.
\end{equation}
The lower objective keeps exactly the original lower strong-convexity and smoothness constants, because the perturbation is linear in $y$.

Let $k=L_g/\mu$ and
\begin{equation}\label{up:eq:Erho}
  E_\rho=\frac{(1+k)\rho C_f^2}{2\mu^2}.
\end{equation}
Then
\begin{equation}\label{up:eq:tilt-bias}
 \norm{y_t(a)-y_0}\le\frac{tC_f}{\mu},
 \qquad
 \norm{h_t(a_0)-\nabla F(x)}\le E_\rho t,
\end{equation}
and
\begin{equation}\label{up:eq:tilt-stability}
 \norm{y_t(a)-y_t(a')}\le\frac{t}{\mu}\norm{a-a'},
 \qquad
 \norm{h_t(a)-h_t(a')}\le k\norm{a-a'}.
\end{equation}
Indeed, writing $d=y_t(a_0)-y_0$, $H=g_{yy}(x,y_0)$, and $B=g_{xy}(x,y_0)$, Taylor expansion gives
\[
 Hd+e_y=-ta_0,
 \qquad \norm{e_y}\le\frac\rho2\norm d^2,
\]
\[
 g_x(x,y_0+d)-g_x(x,y_0)=Bd+e_x,
 \qquad \norm{e_x}\le\frac\rho2\norm d^2.
\]
Therefore
\[
 h_t(a_0)-\nabla F(x)=-BH^{-1}e_y/t+e_x/t,
\]
which proves \eqref{up:eq:tilt-bias}.  If $\rho=0$, the estimator is exact for every $t>0$.

Choose
\begin{equation}\label{up:eq:t-choice}
 t=1\quad(E_\rho=0),
 \qquad
 t=\min\{1,\epsilon/(128E_\rho)\}\quad(E_\rho>0).
\end{equation}
To implement the estimator, solve $g(x,\cdot)$ to $\widehat y_0$, estimate $f_y(x,\widehat y_0)$ by a fresh batch and project onto the radius-$C_f$ ball to obtain $\widehat a$, solve $g(x,y)+t\ip{\widehat a}{y}$ to $\widehat y_t$, and form
\begin{equation}\label{up:eq:G-implementation}
 G=\widehat f_x(x,\widehat y_0)+
 \frac{\widehat g_x(x,\widehat y_t)-\widehat g_x(x,\widehat y_0)}{t}
\end{equation}
using fresh independent lower batches for the two $g_x$ terms.

Epoch SGD for the strongly convex inner problems yields an estimator satisfying
\begin{equation}\label{up:eq:mse-G}
  \E[\norm{G-\nabla F(x)}^2\mid x,\text{past}]
  \le \epsilon^2/16
\end{equation}
at conditional expected per-step cost
\begin{equation}\label{up:eq:perstep-frozen}
 \tO\!\left(
 1+k+\frac{(1+k)^2}{\epsilon^2}
 \left[\sigma_f^2+\bigl(L_f^2/\mu^2+t^{-2}\bigr)\sigma_g^2\right]
 \right).
\end{equation}

\subsection{Inner solves and sampling costs}
The following implementation bound specifies the inner accuracies and fresh batches used by the estimator. Its cost also supplies the upper side of the small-gap comparison.

\begin{lemma}[Implementable frozen-tilt estimate]\label{sg:lem:upper}
At every outer point, a fresh-SFO procedure produces $G$ with conditional mean-square error at most $\epsilon^2/16$ for $\nabla F$, at cost
\begin{equation}\label{sg:eq:perstep}
\tO\!\left(1+k+\frac{(1+k)^2}{\epsilon^2}
\left[\sigma_f^2+\left(\frac{L_f^2}{\mu^2}+1+\frac{E_\rho^2}{\epsilon^2}\right)\sigma_g^2\right]\right).
\end{equation}
Logarithms depend on initialization and accuracy.
\end{lemma}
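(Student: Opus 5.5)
We build the estimator \eqref{up:eq:G-implementation} with the tilt length \eqref{up:eq:t-choice} and split its error into three pieces. Write $\bar h=h_t(\widehat a)$ evaluated at the exact solutions $y_0,y_t(\widehat a)$. Then
\[
G-\nabla F(x)=\bigl(G-\bar h\bigr)+\bigl(h_t(\widehat a)-h_t(a_0)\bigr)+\bigl(h_t(a_0)-\nabla F(x)\bigr).
\]
By \eqref{up:eq:tilt-bias} and the choice of $t$, the last piece is at most $\epsilon/128$. By \eqref{up:eq:tilt-stability}, the middle piece is at most $k\norm{\widehat a-a_0}$. The projection onto the $C_f$-ball is nonexpansive and keeps $\widehat a$ admissible. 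Hence it suffices to make
\[
\E\norm{\widehat a-a_0}^2\le c\,\epsilon^2/k^2.
\]
Since $\norm{\widehat a-a_0}\le\norm{\widehat f_y-f_y(x,\widehat y_0)}+L_f\norm{\widehat y_0-y_0}$, this needs two things. First, a batch of $\tO(k^2\sigma_f^2/\epsilon^2)$ upper samples. Second, an inner accuracy $\E\norm{\widehat y_0-y_0}^2\lesssim \epsilon^2/(k^2L_f^2)$. These pieces produce the $(1+k)^2\sigma_f^2/\epsilon^2$ term and part of the $L_f^2/\mu^2$ term.

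\textbf{The first piece $G-\bar h$.} Its noise part has three contributions, all from fresh independent batches. The $\widehat f_x$ batch has variance $\sigma_f^2/n$. The two $\widehat g_x$ batches have variance $2\sigma_g^2/(t^2n)$. Choosing $n\asymp\sigma_f^2/\epsilon^2+\sigma_g^2/(t^2\epsilon^2)$ controls them. The bias part comes from inexact solves. It is bounded by $L_f\norm{\widehat y_0-y_0}+(L_g/t)(\norm{\widehat y_t-y_t(\widehat a)}+\norm{\widehat y_0-y_0})$. We therefore need
\[
\E\norm{\widehat y_0-y_0}^2,\ \E\norm{\widehat y_t-y_t(\widehat a)}^2\ \lesssim\ \min\Bigl\{\frac{\epsilon^2}{L_f^2},\ \frac{\epsilon^2}{k^2L_f^2},\ \frac{t^2\epsilon^2}{L_g^2}\Bigr\}.
\]
The lower objectives are $\mu$-strongly convex and $L_g$-smooth, with oracle variance $\sigma_g^2$; the linear tilt is known exactly given $\widehat a$. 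Epoch SGD (or restarted SGD with decreasing step sizes) reaches squared accuracy $\varepsilon_y^2$ in $\tO(k+\sigma_g^2/(\mu^2\varepsilon_y^2))$ calls. The logarithms come from the initial distance. For $\widehat y_0$ we bound this distance by $R_0$ propagated along the outer iterates, or by warm-starting. For $\widehat y_t$ the distance to $\widehat y_0$ is at most $tC_f/\mu$ plus the error of $\widehat y_0$. Substituting the target accuracies gives
\[
\frac{\sigma_g^2}{\mu^2}\Bigl(\frac{k^2L_f^2}{\epsilon^2}+\frac{L_g^2}{t^2\epsilon^2}\Bigr)=\frac{(1+k)^2}{\epsilon^2}\Bigl(\frac{L_f^2}{\mu^2}+t^{-2}\Bigr)\sigma_g^2
\]
up to constants, which matches \eqref{up:eq:perstep-frozen}. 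Finally, $t^{-2}\le 1+(128E_\rho/\epsilon)^2$ converts this into \eqref{sg:eq:perstep}.

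\textbf{Main obstacle.} The delicate step is the conditioning structure. The vector $\widehat a$, and hence the tilted problem, depends on the samples used for $\widehat y_0$ and $\widehat f_y$. The plan is to condition on $(\widehat y_0,\widehat a)$ and use only fresh samples afterwards. The solve for $\widehat y_t$ and the two $g_x$ batches are then conditionally unbiased given these. We then combine the three error terms by the inequality $\norm{\sum_{i=1}^3 v_i}^2\le3\sum_{i=1}^3\norm{v_i}^2$, with each term made at most $\epsilon^2/48$. A second point needs care: the difference quotient $(\widehat g_x(\widehat y_t)-\widehat g_x(\widehat y_0))/t$ amplifies inner errors by $L_g/t$. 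This amplification is why the inner accuracy must scale with $t$, and it is where the $E_\rho^2/\epsilon^2$ factor enters; we will handle it by setting the tolerances in the order given above. The expected cost is conditional on the past, because the epoch lengths are set deterministically from public constants.
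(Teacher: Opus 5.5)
Your proposal is correct and follows essentially the same route as the paper. It uses the same split into the Taylor bias $E_\rho t$, the direction-stability term $k\norm{\widehat a-a_0}$, and Lipschitz propagation of inner-solve and batch errors amplified by $L_g/t$. It also uses the same inner targets (matching the paper's $r_0=\epsilon/(C[(1+k)L_f+L_g/t])$ and $r_t=\epsilon t/(CL_g)$), batch sizes, epoch-SGD cost, and the final conversion $t^{-2}\le 1+(128E_\rho/\epsilon)^2$. Your explicit conditioning on $(\widehat y_0,\widehat a)$ before the tilted solve is a detail the paper leaves implicit, and it does not change the argument.
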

\begin{proof}
Write $y_0=y^*(x)$ and $a_0=f_y(x,y_0)$. For a frozen $a$ with $\norm a\le C_f$, let
\[
y_t(a)=\argmin_y\{g(x,y)+t\ip a y\},\qquad
h_t(a)=f_x(x,y_0)+[g_x(x,y_t(a))-g_x(x,y_0)]/t.
\]
Strong convexity yields $\norm{y_t(a)-y_0}\le tC_f/\mu$ and
$\norm{h_t(a)-h_t(a')}\le k\norm{a-a'}$. Taylor expansion of $g_y$ and $g_x$ at $y_0$ gives
\[
\norm{h_t(a_0)-\nabla F(x)}\le E_\rho t.
\]
Take $t=\min\{1,\epsilon/(128E_\rho)\}$, with $t=1$ when $E_\rho=0$.

For implementation, solve $g(x,\cdot)$ to distance MSE $r_0^2$, estimate $f_y$ by a fresh batch and project it to the radius-$C_f$ ball, then solve the frozen-tilt lower objective to distance MSE $r_t^2$. Use
\[
r_0=\frac{\epsilon}{C[(1+k)L_f+L_g/t]},\qquad
r_t=\frac{\epsilon t}{CL_g}.
\]
The direction batch has size $\lceil C(1+k)^2\sigma_f^2/\epsilon^2\rceil\vee1$. Fresh final batches of sizes $\lceil C\sigma_f^2/\epsilon^2\rceil\vee1$ and $\lceil C\sigma_g^2/(t^2\epsilon^2)\rceil\vee1$ estimate $f_x$ and the two lower-gradient terms. The two lower terms use different samples. Nonexpansiveness of projection bounds the direction MSE by $2\sigma_f^2/B_a+2L_f^2r_0^2$. Lipschitzness and the direction-stability bound propagate the remaining errors by
\[
(L_f+L_g/t)\norm{\widehat y_0-y_0}+(L_g/t)\norm{\widehat y_t-y_t(\widehat a)}+k\norm{\widehat a-a_0}.
\]
Together with the Taylor bias, this gives the asserted total MSE.

For clarity, the actual strongly convex inner-solve cost at squared distance $r^2$ is
\[
O\!\left(1+\frac{L_g}{\mu}\log_+\frac{R_0^2}{r^2}+\frac{\sigma_g^2}{\mu^2r^2}\right).
\]
It follows by epoch SGD: at distance bound $D^2$, use step $\min\{1/(2L_g),\mu D^2/(8\sigma_g^2)\}$ and a constant multiple of its contraction time; sum the geometrically decreasing distance targets. Both tilted and untilted means retain the same strong convexity and smoothness, and the tilt is known and fixed within its solve. Substituting $r_0,r_t$ and adding the batches proves \eqref{sg:eq:perstep}.
\end{proof}

\begin{theorem}[Frozen-tilt upper bound]\label{up:thm:frozen-upper}
Under \cref{ass:primitive} and the finite initialization certificate, let $\Lambda\ge\Lip(\nabla F)$ be any valid bound.  Then the original fresh SFO can produce $\widehat x$ with $\E\norm{\nabla F(\widehat x)}\le\epsilon$ using
\begin{equation}\label{up:eq:upper-general}
\boxed{
\tO\!\left[
\left(1+\frac{\Lambda\Delta}{\epsilon^2}\right)
\left\{1+k+\frac{(1+k)^2}{\epsilon^2}
\left[\sigma_f^2+\bigl(L_f^2/\mu^2+1+E_\rho^2/\epsilon^2\bigr)\sigma_g^2\right]\right\}
\right].
}
\end{equation}
Every estimator query uses a fresh first-order sample.
\end{theorem}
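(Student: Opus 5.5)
The plan is to run inexact gradient descent on the true hyperobjective $F$, with step $1/\Lambda$, using the estimator $G$ of Lemma~\ref{sg:lem:upper} at each outer iterate. The conditional mean-square bound $\E[\norm{G-\nabla F(x)}^2\mid x,\text{past}]\le\epsilon^2/16$ holds at every outer point. The total cost is then the number of outer iterations times the per-step cost \eqref{sg:eq:perstep}, which is exactly the product structure in \eqref{up:eq:upper-general}.

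\textbf{Descent step.} Since $F$ has $\Lambda$-Lipschitz gradient, the update $x^+=x-G/\Lambda$ satisfies the descent lemma:
\[
F(x^+)\le F(x)-\frac1\Lambda\ip{\nabla F(x)}{G}+\frac1{2\Lambda}\norm G^2
=F(x)-\frac1{2\Lambda}\norm{\nabla F(x)}^2+\frac1{2\Lambda}\norm{G-\nabla F(x)}^2 .
\]
This uses the identity $-\ip{a}{b}+\tfrac12\norm b^2=-\tfrac12\norm a^2+\tfrac12\norm{b-a}^2$. Note that no unbiasedness of $G$ is needed; only the mean-square error enters, so the Taylor bias $E_\rho t$ is harmless once it is absorbed into the $\epsilon^2/16$ budget.

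\textbf{Telescoping and output rule.} Taking conditional and then total expectations, and summing over $n=0,\dots,N-1$, gives
\[
\frac1N\sum_{n<N}\E\norm{\nabla F(x_n)}^2\le\frac{2\Lambda\Delta}{N}+\frac{\epsilon^2}{16}.
\]
Choose $N=\lceil 4\Lambda\Delta/\epsilon^2\rceil\vee1$, so the right side is at most $\epsilon^2/2+\epsilon^2/16<\epsilon^2$. Output $\widehat x=x_{n^*}$ with $n^*$ uniform on $\{0,\dots,N-1\}$, drawn independently. Jensen's inequality then gives $\E\norm{\nabla F(\widehat x)}\le(\E\norm{\nabla F(\widehat x)}^2)^{1/2}\le\epsilon$. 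Only $F(x_0)-\inf F\le\Delta$ and $\inf F>-\infty$ are used, both part of \cref{ass:primitive}.

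\textbf{Cost accounting and initialization.} Each outer step costs, in conditional expectation, the bound \eqref{sg:eq:perstep}. By the tower property, the total expected cost is at most $N$ times that bound, and $N\lesssim 1+\Lambda\Delta/\epsilon^2$ gives \eqref{up:eq:upper-general}. Freshness of every sample is inherited from the lemma.

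The main obstacle is that the lemma's logarithms depend on an initialization distance $R_0$ for the inner solves. That certificate is given only at $x_0$, and it must remain valid along the trajectory. I would handle this by warm-starting the inner solve at $x_{n+1}$ from $\widehat y_0$ computed at $x_n$. Lipschitzness of $y^*$ (constant $k$) and $\norm{x_{n+1}-x_n}=\norm G/\Lambda$ give
\[
\E\norm{\widehat y_0^{(n)}-y^*(x_{n+1})}^2\le 2r_0^2+2k^2\E\norm G^2/\Lambda^2.
\]
Here $\E\norm G^2$ is bounded by $2\E\norm{\nabla F}^2+\epsilon^2/8$. To bound the gradient term I would use either the uniform estimate $\norm{\nabla F}\le C_f(1+k)$, which follows from boundedness of $f_y$ and $f_x$ bounds, or, failing a bound on $f_x$, a conditional restart from the previous certified radius. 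In either case the effective initialization radius stays polynomial in the problem parameters and $\epsilon^{-1}$, so it only affects the suppressed logarithmic factors.
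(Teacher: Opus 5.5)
Your main line is exactly the paper's argument: the descent inequality needs only the mean-square error of $G$, you telescope over $N\asymp 1+\Lambda\Delta/\epsilon^2$ steps, select the output uniformly and apply Jensen, and multiply the outer count by the per-step cost of Lemma~\ref{sg:lem:upper}. There is, however, a genuine gap in the initialization step you flagged.

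\textbf{The gradient bound is not available.} The uniform estimate $\norm{\nabla F}\le C_f(1+k)$ does not hold under \cref{ass:primitive}. In $\nabla F=f_x-g_{xy}g_{yy}^{-1}f_y$ only the second term is bounded, by $kC_f$. The term $f_x$ is merely Lipschitz: for example, $f=L_f\norm{x}^2/2$ is admissible. So $\nabla F$ can grow linearly along the trajectory. Your fallback, a ``conditional restart from the previous certified radius,'' is not specified.

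\textbf{Warm starting has a conditioning problem.} The iterate $x_{n+1}$ depends on $\widehat y_0^{(n)}$ through $G_n$. Given the past, which contains $\widehat y_0^{(n)}$, the starting error $\norm{\widehat y_0^{(n)}-y^*(x_{n+1})}$ is a fixed unknown number with no certified bound. Hence the lemma's conditional guarantee does not apply as stated. Neither does the epoch schedule, which needs a known distance bound to set its step sizes. To use warm starts, you would have to redo the inner analysis in unconditional expectation with a deterministic schedule.

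\textbf{The paper's fix.} The paper starts every inner solve from $y_{\rm init}$. Since $y^*$ is $k$-Lipschitz, $\norm{y_{\rm init}-y^*(x)}\le R_0+k\norm{x-x_0}$. The tilted solution lies within a further $tC_f/\mu$. This radius is computable from public constants and the current $x$, so the lemma applies conditionally at every step. Its logarithm is then controlled in expectation by the inequality you already derived:
$\sum_{n<N}\E\norm{G_n}^2\le 2\sum_{n<N}\E\norm{\nabla F(x_n)}^2+N\epsilon^2/8\le 4\Lambda\Delta+N\epsilon^2/4$.
By Cauchy--Schwarz, $\E\norm{x_n-x_0}^2\le N\Lambda^{-2}\sum_{m<N}\E\norm{G_m}^2$. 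Concavity of the logarithm (Jensen) then places this polynomial quantity only inside the suppressed logarithmic factors. Your warm-start variant would need the same descent-derived bound, not a uniform gradient bound.
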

\begin{proof}
Use \eqref{up:eq:G-implementation} and update $x^+=x-G/\Lambda$.  The true smoothness of $F$ gives
\[
F(x^+)\le F(x)-\frac{\norm{\nabla F(x)}^2}{2\Lambda}
+\frac{\norm{G-\nabla F(x)}^2}{2\Lambda}.
\]
After $O(1+\Lambda\Delta/\epsilon^2)$ steps and uniform output selection, \eqref{up:eq:mse-G} implies the desired expected gradient bound.  The starting distance for an inner solve at $x$ is controlled by the original certificate plus $k\norm{x-x_0}+tC_f/\mu$; the descent inequality bounds the sum of squared outer steps, and Jensen's inequality puts this only into logarithms.  Multiplying the per-step cost \eqref{up:eq:perstep-frozen} by the outer count and using \eqref{up:eq:t-choice} proves \eqref{up:eq:upper-general}.
\end{proof}

A universal bound under \cref{ass:primitive} is
\begin{equation}\label{up:eq:Lambda-universal}
 \Lambda_{\rm univ}=(1+k)^2\left(L_f+\frac{\rho C_f}{\mu}\right),
\end{equation}
obtained by comparing $F_x=f_x-g_{xy}g_{yy}^{-1}f_y$ and using the inverse-difference identity. Under the normalized scaling $C_f,L_f,L_g=O(L)$, $\kappa=L/\mu$, and $\chi=1+\rho/\mu$, \eqref{up:eq:upper-general} expands to
\begin{equation}\label{up:eq:upper-normalized}
\begin{split}
\tO\big(&L(1+\theta)\kappa^3\Delta\epsilon^{-2}
+L(1+\theta)\kappa^4\sigma_f^2\Delta\epsilon^{-4}
\\
&+L(1+\theta)\kappa^6\sigma_g^2\Delta\epsilon^{-4}
+L^3\theta^2(1+\theta)\kappa^8\sigma_g^2\Delta\epsilon^{-6}\big).
\end{split}
\end{equation}
For $\theta\gtrsim1$, the sixth-power coefficient is $\Theta(\chi^3)$. Its quadratic dependence on $\theta$ sharpens the small-variation estimate; setting $\theta=0$ retains only the terms of orders $\epsilon^{-2}$ and $\epsilon^{-4}$.

\subsection{Curvature-controlled subclass}
If a class additionally satisfies
\begin{equation}\label{up:eq:curv-class}
   \Lip(\nabla F)\le C_FL\kappa^2,
\end{equation}
then one may take $\Lambda=O(L\kappa^2)$ instead of the universal $O(L\chi\kappa^2)$.  The multiscale hard family above belongs to this class because $F_U(x)=\mathcal H_U(\kappa x)$ and $\Lip(\nabla\mathcal H_U)=O(L)$.  Therefore the leading high-accuracy lower-noise term over this curvature-controlled class is
\begin{equation}\label{up:eq:curv-match}
   \T_{\SFO}(\epsilon)
   =\tTheta\!\left(L^3\chi^2\kappa^8\sigma_g^2\Delta\epsilon^{-6}\right)
\end{equation}
for fixed positive $\sigma_g$, sufficiently small $\epsilon$, and $\rho\gtrsim\mu$ with $\chi\lesssim\kappa$. The extra curvature condition specifies the subclass. The cubic example in \cref{app:sharpness} realizes the larger $\Omega(L\chi\kappa^2)$ curvature scale within the general primitive class.

\section{Small-Gap Complexity}\label{app:small-gap}
\subsection{Fixed profiles and parameter choices}
At the small-gap scale, a two-point family realizes the full curvature scale in fixed dimension. We specify its profiles, establish global stationary-set separation, and charge the information needed to identify the sign.

All constants in this section are numerical. Let $\mathsf s:\R\to[0,1]$ be the septic step
\[
\mathsf s(t)=\begin{cases}
0,&t\le0,\\
35t^4-84t^5+70t^6-20t^7,&0<t<1,\\
1,&t\ge1.
\end{cases}
\]
It is $C^3$, and $\mathsf s'(t)=140t^3(1-t)^3$ on $(0,1)$, so $\norm{\mathsf s'}_\infty=35/16$.

Define a bounded even bowl profile
\begin{equation}\label{sg:eq:phi}
\phi(t)=\int_0^{|t|} v\,[1-\mathsf s(v-1)]\,dv.
\end{equation}
Then $\phi(t)=t^2/2$ for $|t|\le1$, $\phi$ is nondecreasing in $|t|$, and it is constant for $|t|\ge2$. Its derivatives through order three are bounded, and $\phi$ is smooth at zero because it is exactly quadratic there.

Fix $M=32$, put $m_*=M+2$, and set
\begin{equation}\label{sg:eq:psi}
\zeta(t)=1-\mathsf s\big((|t|-2)/M\big),\qquad \psi(t)=t\zeta(t).
\end{equation}
This is an odd $C^3$ function, equals $t$ on $[-2,2]$, and vanishes outside $[-m_*,m_*]$. For $t\ge0$,
\begin{equation}\label{sg:eq:psiprime}
-t/4\le\psi'(t)\le1,
\end{equation}
because $0\le\zeta\le1$ and $-1/4\le\zeta'\le0$. The derivative bounds for $\psi$ are absolute constants.

For a normal width $a>0$ define
\begin{equation}\label{sg:eq:q}
q_a(s)=s\left[1-\mathsf s\!\left(\frac{|s|/a-1/4}{3/4}\right)\right].
\end{equation}
Thus $q_a(s)=s$ on $|s|\le a/4$, $q_a(s)=0$ on $|s|\ge a$, and
\[
\norm{q_a^{(j)}}_\infty\le C_j a^{1-j}\quad(0\le j\le3).
\]

Let $L>0$, $\kappa\ge2$, and $1\le\theta\le\kappa$. Put
\begin{equation}\label{sg:eq:params}
\mu=L/\kappa,\quad S=L\theta,\quad \Gamma=\kappa^2 S,
\quad \gamma=8\epsilon,\quad
R=\bar r/\theta,\quad \delta=\frac{\gamma}{\kappa S},
\end{equation}
where the numerical $\bar r>0$ is sufficiently small. Choose the high-accuracy constant $c>0$ sufficiently small relative to $\bar r$ and the fixed profiles, and assume
\begin{equation}\label{sg:eq:accuracy}
0<\epsilon\le c\frac{L}{\theta\kappa}.
\end{equation}
The constants can be selected to ensure simultaneously
\begin{equation}\label{sg:eq:side}
m_*\delta\le R,\qquad R/\kappa\le1,
\qquad \gamma\le\frac{LR}{2\kappa},
\qquad C\theta\delta\le1/4.
\end{equation}
Additional appearances of $C$ below can be absorbed by first fixing $\bar r$, then decreasing $c$. All numerical constants are fixed in this order.

\subsection{Globally regular mean problems}
Use a scalar outer variable $x$ and lower variables $y=(u,s,z)\in\R^3$. Define
\begin{equation}\label{sg:eq:r01}
r_0(u)=SR^2\phi(u/R),\qquad r_1(u)=S\delta^2\psi(u/\delta),
\qquad r_\omega=r_0-\omega r_1,
\end{equation}
where the unknown sign is $\omega\in\{-1,+1\}$. Let
\begin{equation}\label{sg:eq:P}
P(x)=L\big(\sqrt{1+x^2}-1\big).
\end{equation}
It obeys $0\le P(x)\le Lx^2/2$, $|P'(x)|\le L$, $0<P''(x)\le L$, and is coercive.

Define the common upper objective and the two mean lower objectives by
\begin{equation}\label{sg:eq:f}
f(x,u,s,z)=P(x)+Ls,
\end{equation}
\begin{equation}\label{sg:eq:g}
\begin{split}
g_\omega(x,u,s,z)={}&\frac\mu2(u^2+s^2)+\frac L2 z^2-Lxu\\
&-\frac\mu L\big[r_0(u)q_R(s)-\omega r_1(u)q_\delta(s)\big]
+\frac\mu{2L^2}r_\omega(u)^2.
\end{split}
\end{equation}
The complete exact upper response is common to the two signs, and all sign dependence is contained in the lower objective.

\begin{lemma}[Profile and compensation bounds]\label{sg:lem:derivatives}
For $j=0,1,2,3$,
\[
\norm{r_0^{(j)}}_\infty\le CS R^{2-j},\qquad
\norm{r_1^{(j)}}_\infty\le CS\delta^{2-j}.
\]
The lower-Hessian perturbation in \eqref{sg:eq:g} has norm at most
\begin{equation}\label{sg:eq:Hpert}
C\mu\big[\theta(R+\delta)+\theta^2(R^2+\delta^2)\big].
\end{equation}
Its derivative has norm at most
\begin{equation}\label{sg:eq:third}
C\mu\big[\theta+\theta^2(R+\delta)\big].
\end{equation}
\end{lemma}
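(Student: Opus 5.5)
The plan is a direct chain-rule computation, organized by the three groups of terms in \eqref{sg:eq:g}: the profile bounds, the coupling $-(\mu/L)[r_0q_R-\omega r_1q_\delta]$, and the compensation $(\mu/2L^2)r_\omega^2$. The quadratic part $\frac\mu2(u^2+s^2)+\frac L2z^2-Lxu$ has constant lower Hessian, so it contributes nothing to either \eqref{sg:eq:Hpert} or \eqref{sg:eq:third}. All bounds are then converted to the stated form using $S/L=\theta/\kappa$ and $\mu=L/\kappa$, which give $\mu S/L=\mu^2\theta$ and $\mu S^2/L^2=\mu\theta^2/\kappa^2\le\mu\theta^2$.

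\emph{Profile bounds.} Since $r_0(u)=SR^2\phi(u/R)$, we have $r_0^{(j)}(u)=SR^{2-j}\phi^{(j)}(u/R)$. The derivatives of $\phi$ through order three are bounded absolute constants: $\phi$ is quadratic near zero, constant beyond $|t|\ge2$, and built from the $C^3$ septic step. This gives the first bound. The same scaling with $\psi$ gives $r_1^{(j)}=S\delta^{2-j}\psi^{(j)}(u/\delta)$. Because $\psi$ is compactly supported, $\psi$ and $\psi',\psi'',\psi'''$ are bounded by absolute constants.

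\emph{Coupling terms.} The lower variables are $(u,s,z)$, and $z$ enters only quadratically, so only the $(u,s)$ block matters. The second derivatives of $r_0(u)q_R(s)$ are $r_0''q_R$, $r_0'q_R'$ and $r_0q_R''$. Using $\norm{q_R^{(j)}}_\infty\le CR^{1-j}$, each is at most $CSR$. Multiplying by $\mu/L$ gives $C\mu\theta R/\kappa\le C\mu\theta R$. The third derivatives are $r_0'''q_R$, $r_0''q_R'$, $r_0'q_R''$ and $r_0q_R'''$, each at most $CS$, so the third-derivative contribution is at most $C\mu\theta$. The $\delta$-term is handled identically with $(r_1,q_\delta,\delta)$ in place of $(r_0,q_R,R)$. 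It gives $C\mu\theta\delta$ for the Hessian and $C\mu\theta$ for the third derivative. The key point is that the scales match: $r_1$ lives at width $\delta$ and $q_\delta$ has width $\delta$, so the homogeneity degrees cancel exactly.

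\emph{Compensation.} This term depends only on $u$. Its second derivative is $(\mu/L^2)(r_\omega'^2+r_\omega r_\omega'')$. Since $|r_\omega'|\le CS(R+\delta)$ and $|r_\omega r_\omega''|\le CS^2(R^2+\delta^2)$ (the cross terms are handled by $R\delta\le R^2+\delta^2$), it is bounded by $C\mu\theta^2(R^2+\delta^2)$. Its third derivative is $(\mu/L^2)(3r_\omega'r_\omega''+r_\omega r_\omega''')$. Here $|r_\omega'r_\omega''|\le CS^2(R+\delta)$, and $|r_\omega r_\omega'''|\le CS^2(R^2/\delta+\delta)$ because $r_1'''$ scales like $1/\delta$.

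\emph{Main obstacle.} The only delicate part is the cross term $r_0r_1'''\sim SR^2\cdot S/\delta$, which looks too large for \eqref{sg:eq:third}. I would resolve it by exploiting supports. The factor $r_1'''$ is supported on $|u|\le m_*\delta\le R$, where $r_0=Su^2/2$, so on this support $|r_0r_1'''|\le CS^2\delta^2\cdot(1/\delta)=CS^2\delta$. The remaining products are controlled pointwise in the same way, with each derivative order of $r_1$ paired against the size of $r_0$ on $\supp r_1$. With this, the compensation contributes $C\mu\theta^2(R+\delta)$ to the third derivative, and summing the three groups gives \eqref{sg:eq:Hpert} and \eqref{sg:eq:third}.
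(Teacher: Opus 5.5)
Your proposal is correct and follows the paper's proof essentially step for step: it uses scaling of the fixed profiles, matched-scale product bounds for the two coupling terms, and the key support observation that $r_0=Su^2/2$ on $\supp r_1\subseteq\{|u|\le m_*\delta\}\subseteq\{|u|\le R\}$, which reduces the dangerous $r_0r_1'''$ to $O(S^2\delta)$. The paper applies this localization to the whole mixed product after expanding $r_\omega^2=r_0^2+r_1^2-2\omega r_0r_1$, whereas you use global bounds for the other cross terms, which also suffices. One bookkeeping slip needs fixing: by \eqref{sg:eq:params} $S=L\theta$, so $S/L=\theta$, $\mu S/L=\mu\theta$ and $\mu S^2/L^2=\mu\theta^2$ exactly. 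Your stated $S/L=\theta/\kappa$ and $\mu S/L=\mu^2\theta$ are wrong, as is the intermediate equality $\frac{\mu}{L}\,CSR=C\mu\theta R/\kappa$, but since you then discard the $1/\kappa$ factors, the final bounds \eqref{sg:eq:Hpert}--\eqref{sg:eq:third} are unaffected.
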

\begin{proof}
The first estimates follow by scaling the fixed profiles. For a product $r_j(u)q_a(s)$ with $a=R$ or $\delta$, every derivative of total order two is bounded by $CSa$, and every derivative of total order three by $CS$. Multiplication by $\mu/L$ gives the coupling contributions.

The compensation expands as $r_0^2+r_1^2-2\omega r_0r_1$. Derivatives of order two and three of $r_j^2$ are bounded by $CS^2a^2$ and $CS^2a$. For the mixed product, every nonzero jet of $r_1$ lies in $|u|\le m_*\delta\le R$. In this region $r_0(u)=Su^2/2$, so its jets have bounds $CS\delta^2,CS\delta,S,0$. The second and third derivatives of $r_0r_1$ are consequently $O(S^2\delta^2)$ and $O(S^2\delta)$, respectively. The support restriction supplies the local $\delta$-scale bound in each product. Multiply by $\mu/L^2$ and use $S/L=\theta$.
\end{proof}

\begin{theorem}[Global mean certificate and exact response]\label{sg:thm:mean}
With the numerical constants as above, for both signs,
\begin{equation}\label{sg:eq:budgets}
\begin{split}
&\nabla^2_{yy}g_\omega\succeq(\mu/2)I,\quad
\Lip(\nabla g_\omega)\le CL,\quad
\Lip(\nabla^2g_\omega)\le C\mu\theta,\\
&\sup\norm{\nabla f}\le\sqrt2L,\qquad\Lip(\nabla f)\le L.
\end{split}
\end{equation}
The initial lower solution is known exactly: $y^*(0)=0$. At every $x\in\R$,
\begin{equation}\label{sg:eq:response}
u^*(x)=\kappa x,\quad s^*(x)=r_\omega(\kappa x)/L,\quad z^*(x)=0,
\end{equation}
and the true hyperobjective is
\begin{equation}\label{sg:eq:F}
F_\omega(x)=P(x)+r_0(\kappa x)-\omega r_1(\kappa x).
\end{equation}
It is coercive and has globally Lipschitz gradient, with
\begin{equation}\label{sg:eq:curv}
\Lip(F_\omega')\le C\Gamma,\qquad
F_\omega'(0)=-\omega\gamma,\qquad
F_\omega''(0)=\Gamma+L.
\end{equation}
\end{theorem}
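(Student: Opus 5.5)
The proof follows the template of \cref{lb:thm:mean-lower}, now in fixed dimension with explicit profiles. I would first verify the regularity budgets \eqref{sg:eq:budgets}, then exhibit the exact response \eqref{sg:eq:response}, and finally read off \eqref{sg:eq:F} and \eqref{sg:eq:curv} by substitution.

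For the lower Hessian, the quadratic part $\frac\mu2(u^2+s^2)+\frac L2z^2$ has Hessian $\succeq\mu I$. By \cref{sg:lem:derivatives}, the remaining perturbation has operator norm at most
\[
C\mu\bigl[\theta(R+\delta)+\theta^2(R^2+\delta^2)\bigr].
\]
Since $\theta R=\bar r$ and $\theta\delta\le1/4$ by \eqref{sg:eq:side}, with $\bar r$ small, this is at most $\mu/2$. Hence $\nabla^2_{yy}g_\omega\succeq(\mu/2)I$. The same lemma gives $\Lip(\nabla^2 g_\omega)\le C\mu[\theta+\theta^2(R+\delta)]\le C\mu\theta$. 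This covers the $g_{yy}$ and $g_{xy}$ blocks, because the only $x$-dependence is the bilinear term $-Lxu$, whose Hessian is constant. For $\Lip(\nabla g_\omega)\le CL$, the perturbation Hessian is $O(\mu)$ by the same bound, and the remaining terms have Hessian entries of size $O(L)$. For $f$, we have $\nabla f=(P'(x),0,L,0)$ with $|P'|\le L$, so $\norm{\nabla f}\le\sqrt2L$, and $P''\le L$ gives $\Lip(\nabla f)\le L$.

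For the exact response, fix $x$ and test the point $u=\kappa x$, $s=r_\omega(u)/L$, $z=0$. The sizes $|r_0|\le CSR^2$ and $|r_1|\le CS\delta^2$ give $|s|\le C\theta(R^2+\delta^2)$. I need $|s|\le\delta/4$, so that both $q_R$ and $q_\delta$ are the identity near $s$. This requires $C\theta R^2\le\delta/4$, which is the delicate point. Since $\delta=\gamma/(\kappa S)$ scales with $\epsilon$, this inequality fails in general. So I would use the support structure instead. Where $r_1$ and its derivative vanish ($|u|>m_*\delta$), the $q_\delta$ term is inactive, and only $q_R$ needs to be linear; this holds because $C\theta R^2\le R/4$. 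On $|u|\le m_*\delta\le R$, we have $r_0=Su^2/2=O(S\delta^2)$, so $|s|\le C\theta\delta^2\le\delta/4$ by $C\theta\delta\le1/4$.

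With linearity in hand, the stationarity equations are as follows. The $s$-equation reads $\mu s-\frac\mu L r_\omega(u)=0$, which holds. In the $u$-equation, the coupling derivative $-\frac\mu L r_\omega'(u)s$ cancels the compensation derivative $\frac\mu{L^2}r_\omega r_\omega'$ at this $s$. What remains is $\mu u-Lx=0$, so $u=\kappa x$. The $z$-equation gives $z=0$. Strong convexity in $y$ makes this critical point the unique minimizer, and $y^*(0)=0$ follows from $r_\omega(0)=0$. Substituting into $f$ gives $F_\omega=P(x)+r_\omega(\kappa x)$, which is \eqref{sg:eq:F}.

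The derivative facts are then direct. Because $r_0(\kappa x)$ is quadratic near $0$ and $r_1(\kappa x)=S\delta\kappa x$ there, we get $F_\omega'(0)=-\omega S\delta\kappa=-\omega\gamma$. For the second derivative, $F_\omega''(0)=P''(0)+\kappa^2S-\omega\kappa^2S\delta^2\psi''(0)/\delta^2$, and $\psi''(0)=0$ since $\psi$ is odd and linear near zero. This gives $\Gamma+L$. The global bound $\Lip(F_\omega')\le L+\kappa^2 CS=C\Gamma$ comes from the profile bounds at order two. Coercivity follows from $P$ being coercive while $r_0$ and $r_1$ are bounded.

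The main obstacle is the linearity-of-$q$ step, which requires the case split on the support of $r_1$.
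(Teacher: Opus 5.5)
Your proposal is correct and follows essentially the same route as the paper. The budgets come from \cref{sg:lem:derivatives} with $\theta R=\bar r$ small; the paper controls $\theta\delta$ through $\delta\le R$ rather than the side condition, which makes no difference. Linearity of the normal features uses the same case split on the support of $r_1$, the $u$-equation reduces to $\mu u=Lx$ through the same cancellation of coupling and compensation derivatives, and the identities at zero are read off the same local form near the origin.
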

\begin{proof}
Equations \eqref{sg:eq:Hpert}--\eqref{sg:eq:third}, $\theta R=\bar r$, and $\delta\le R$ prove the lower-Hessian and third-derivative bounds after choosing $\bar r$ small enough. The mixed $x,u$ block is the constant $-L$; the dummy block has curvature $L$. The upper claims follow directly from \eqref{sg:eq:f}.

We verify linearity of each normal feature on the support of its coefficient at the proposed response. Outside the support of $r_1$, all of its jets vanish, and $r_0/L\le C\theta R^2=C\bar r R\le R/4$. Inside that support,
\[
|r_\omega(u)|/L\le C\theta\delta^2\le\delta/4\le R/4.
\]
Thus the coarse feature is linear at the proposed response everywhere, and the fine feature is linear wherever its coefficient or derivative is nonzero. At the proposed point,
\[
g_s=\mu s-\frac\mu L(r_0-\omega r_1)=0,
\]
\[
g_u=\mu u-Lx-\frac\mu Ls(r_0'-\omega r_1')
+\frac\mu{L^2}r_\omega r_\omega'=0.
\]
Global lower strong convexity supplies uniqueness. Substitution gives \eqref{sg:eq:F}; the response at zero follows from $r_0(0)=r_1(0)=0$.

Both $r_0,r_1$ are bounded, and $P$ is coercive, proving coercivity of $F_\omega$. The second-derivative estimates give $|F_\omega''|\le L+C\kappa^2S\le C\Gamma$. In $|\kappa x|\le2\delta$ the exact local formula is
\begin{equation}\label{sg:eq:localF}
F_\omega(x)=\frac\Gamma2x^2-\omega\gamma x+P(x).
\end{equation}
This proves the two identities at zero.
\end{proof}

\paragraph{Budget normalization.}
The theorem records raw numerical slack explicitly. To place the construction inside fixed prescribed primitive budgets, use a fixed smaller internal $L$, a fixed larger internal $\mu$, and a fixed smaller internal $\theta$ before evaluating \eqref{sg:eq:params}. The resulting $\kappa$ and $\chi$ differ from the target parameters by numerical factors in the large-condition positive-variation range. This restores every displayed exponent. The condition-number comparison uses this asymptotic normalized range. The same construction provides a bounded full upper gradient and an exact initial lower solution.

\subsection{Global separation of stationary sets}
\begin{lemma}[Stationary sets separate on the entire real line]\label{sg:lem:separation}
For every $x\in\R$,
\begin{equation}\label{sg:eq:separation}
|F_+'(x)|+|F_-'(x)|\ge2\gamma.
\end{equation}
Each $F_\omega$ has exactly one stationary point, a global minimizer $x_\omega^*$ with sign $\omega$ and $|\kappa x_\omega^*|<2\delta$. Its initial gap $D_\omega=F_\omega(0)-\inf F_\omega$ satisfies
\begin{equation}\label{sg:eq:gap}
\frac{\gamma^2}{2(\Gamma+L)}\le D_\omega\le\frac{\gamma^2}{2\Gamma}.
\end{equation}
\end{lemma}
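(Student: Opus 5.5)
The plan is to write both derivatives through a common term and a sign-dependent term, and then use the elementary identity $|a-b|+|a+b|=2\max\{|a|,|b|\}$. By \eqref{sg:eq:F}, with $u=\kappa x$,
\[
F_\omega'(x)=a(x)-\omega b(x),\qquad a(x)=P'(x)+\kappa r_0'(u),\qquad b(x)=\kappa r_1'(u)=\gamma\,\psi'(u/\delta),
\]
where the last equality uses $\kappa S\delta=\gamma$ from \eqref{sg:eq:params}. Hence \eqref{sg:eq:separation} is equivalent to showing $\max\{|a(x)|,|b(x)|\}\ge\gamma$ for every $x$. The argument splits into three ranges of $|u|$. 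Throughout, $P'(x)$ has the sign of $x$, and $r_0'(u)=SR\phi'(u/R)$ has the sign of $u$ because $\phi$ is even and nondecreasing in $|t|$. So the two terms of $a$ never cancel, and $|a|\ge\max\{|P'(x)|,\kappa|r_0'(u)|\}$.

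\emph{Inner range $|u|\le2\delta$.} Here $\psi'=1$, so $|b|=\gamma$ and \eqref{sg:eq:separation} follows at once. \emph{Middle range $2\delta<|u|\le R$.} Here $\phi$ is quadratic, so $\kappa r_0'(u)=\kappa S u=\Gamma x$. This gives $|a|\ge\kappa S|u|=\gamma|u|/\delta>2\gamma$. \emph{Outer range $|u|>R$.} Here $|x|\ge R/\kappa$, and $R/\kappa\le1$ by \eqref{sg:eq:side}. Then $|P'(x)|=L|x|/\sqrt{1+x^2}\ge LR/(\sqrt2\kappa)\ge\sqrt2\gamma$, using $\gamma\le LR/(2\kappa)$ from \eqref{sg:eq:side}. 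This step uses no information about $\phi'$, which is why it covers the region $|u|\ge2R$ where $r_0'$ vanishes. These three ranges prove \eqref{sg:eq:separation}.

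For the stationary points, a zero of $F_\omega'$ requires $a=\omega b$, hence $|a|=|b|$. I will exclude this outside the inner range. For $2\delta<|u|\le m_*\delta$ (which lies inside $|u|\le R$ by \eqref{sg:eq:side}), write $t=|u|/\delta>2$. The middle-range computation gives $|a|\ge\gamma t$, while \eqref{sg:eq:psiprime} gives $|b|\le\gamma\max\{1,t/4\}<\gamma t$. For the remaining points with $|u|>2\delta$, the support of $\psi$ forces $b=0$, while $|a|>0$ from the estimates above. So every stationary point satisfies $|\kappa x|\le2\delta$. On that interval, \eqref{sg:eq:localF} gives $F_\omega'(x)=\Gamma x+P'(x)-\omega\gamma$, which is strictly increasing since $P''>0$. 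It equals $-\omega\gamma$ at $x=0$, and at the endpoints $|\kappa x|=2\delta$ it has magnitude at least $2\gamma-\gamma>0$ with the sign of $x$. Therefore it has exactly one zero $x_\omega^*$, this zero lies strictly inside the interval, and it has sign $\omega$. Since $F_\omega$ is coercive (Theorem~\ref{sg:thm:mean}), it attains a global minimum, which must be this unique stationary point.

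For the gap \eqref{sg:eq:gap}, both $0$ and $x_\omega^*$ lie in the inner interval, and there $\Gamma\le F_\omega''\le\Gamma+L$ because $0<P''\le L$. The standard strongly convex and smooth sandwich on that interval, applied with $|F_\omega'(0)|=\gamma$ and $F_\omega'(x_\omega^*)=0$, then gives $\gamma^2/(2(\Gamma+L))\le F_\omega(0)-F_\omega(x_\omega^*)\le\gamma^2/(2\Gamma)$. The main obstacle is the middle window $2\delta<|u|\le m_*\delta$. There the fine term $b$ can exceed $\gamma$ in magnitude, since $|\psi'|$ reaches about $m_*/4$. The argument relies on the one-sided slope bound \eqref{sg:eq:psiprime}, $\psi'\ge -t/4$, to keep $|b|$ strictly below the coarse slope $\gamma t$.
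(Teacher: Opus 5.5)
Your proof is correct and follows the paper's argument. It uses the same core/middle/outer split in $u=\kappa x$, the same use of $\psi'\le1$ and $\psi'\ge-t/4$ on the window $2\delta<|u|\le m_*\delta$, monotonicity of $\Gamma x+P'(x)$ on the core for uniqueness, and the same quadratic upper and lower comparisons on the core for the gap. The differences are cosmetic. Your identity $|a-b|+|a+b|=2\max\{|a|,|b|\}$ replaces the paper's check that both derivatives are positive and at least $\gamma$ off the core, and you handle negative $x$ by parity rather than via $F_\omega(-x)=F_{-\omega}(x)$.
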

\begin{proof}
For $|\kappa x|\le2\delta$, write $a(x)=\Gamma x+P'(x)$. Equation \eqref{sg:eq:localF} gives $F_\pm'=a(x)\mp\gamma$, and the triangle inequality proves \eqref{sg:eq:separation} in this region. Since $a'(x)=\Gamma+P''(x)>0$, it has precisely one root at $a(x)=\omega\gamma$, on the corresponding side of zero and inside this region.

It remains to exclude common or additional stationary points away from the local core. Put $u=\kappa x>0$. If $2\delta\le u\le R$, let $t=u/\delta\ge2$. Using \eqref{sg:eq:psiprime} and the quadratic core of $r_0$,
\[
r_0'(u)-r_1'(u)=S\delta(t-\psi'(t))\ge S\delta,
\]
\[
r_0'(u)+r_1'(u)=S\delta(t+\psi'(t))\ge3S\delta t/4\ge S\delta.
\]
These inequalities also cover the part beyond the support of $r_1$, where $\psi'=0$. Since $P'(x)>0$, both hypergradients have size at least $\kappa S\delta=\gamma$ and are positive.

For $u\ge R$, the fine term vanishes and $r_0'(u)\ge0$. Monotonicity of $P'$ and \eqref{sg:eq:side} give
\[
P'(u/\kappa)\ge\frac{LR}{\kappa\sqrt{1+(R/\kappa)^2}}
\ge\frac{LR}{2\kappa}\ge\gamma.
\]
For negative $x$, use $F_\omega(-x)=F_{-\omega}(x)$. Both derivatives are negative outside the core. This proves global separation and uniqueness; coercivity proves that the stationary point is the global minimizer.

In the core, $P\ge0$ gives $F_\omega\ge\Gamma x^2/2-\omega\gamma x$, hence the upper gap bound. For the lower bound test $x=\omega\gamma/(\Gamma+L)$, which is in the core. Since $P(x)\le Lx^2/2$, its value is at most $-\gamma^2/[2(\Gamma+L)]$. Finally $F_\omega(0)=0$.
\end{proof}

The global separation converts sign uncertainty into optimization error: at every fixed output, the average gradient norm over the two signs is at least $\gamma=8\epsilon$.

The gap is $\Theta(\epsilon^2/\Gamma)$, and the minimizer distance is $\Theta(\epsilon/\Gamma)$. Both are compatible with the elementary descent-path restriction $\operatorname{dist}(0,\{|F'|\le\epsilon\})\le2\Delta/\epsilon$. The gap and stationary-point separation are realized by the explicit mean pair.

\subsection{A conservative bounded-variance oracle}
Let $\bar g=(g_++g_-)/2$. Then
\begin{equation}\label{sg:eq:D}
g_\omega=\bar g+\omega\mathcal D,\qquad
\mathcal D(u,s)=\frac\mu L r_1(u)q_\delta(s)-\frac\mu{L^2}r_0(u)r_1(u).
\end{equation}
The informative potential $\mathcal D$ includes both the coupling and compensation differences.

\begin{lemma}[Global informative-gradient envelope]\label{sg:lem:envelope}
For every $(x,u,s,z)$,
\begin{equation}\label{sg:eq:B}
\norm{\nabla\mathcal D}\le B:=C_B\frac{\mu S\delta^2}{L}
= C_B\frac{\gamma^2}{L\theta\kappa^3},
\end{equation}
where $C_B$ is a fixed sufficiently large numerical constant.
\end{lemma}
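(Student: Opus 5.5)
The plan is to differentiate the two pieces of $\mathcal D$ in \eqref{sg:eq:D} separately and bound each gradient uniformly over $(u,s)$. There is no dependence on $x$ or $z$, so $\nabla\mathcal D=(\partial_u\mathcal D,\partial_s\mathcal D)$. The scaling inputs are Lemma~\ref{sg:lem:derivatives} for $r_0,r_1$ and the bounds $\norm{q_\delta^{(j)}}_\infty\le C_j\delta^{1-j}$.

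\textbf{Coupling term.} For $\frac\mu L r_1(u)q_\delta(s)$ the two partial derivatives are
\[
\partial_u=\frac\mu L r_1'(u)q_\delta(s),\qquad \partial_s=\frac\mu L r_1(u)q_\delta'(s).
\]
The first is bounded by $\frac\mu L\cdot CS\delta\cdot C\delta$ and the second by $\frac\mu L\cdot CS\delta^2\cdot C$. Both are therefore $O(\mu S\delta^2/L)$. The point is that each factor carries exactly the $\delta$-scaling needed, because $q_\delta$ is $O(\delta)$ in value and $O(1)$ in slope.

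\textbf{Compensation term.} The term $\frac\mu{L^2}r_0r_1$ has no $s$-dependence, so only the $u$-derivative matters:
\[
\frac\mu{L^2}\bigl(r_0'r_1+r_0r_1'\bigr).
\]
Here the global bound $|r_0|\le CSR^2$ would be too weak. I would instead use the support localization from the proof of Lemma~\ref{sg:lem:derivatives}. Every nonzero jet of $r_1$ lies in $|u|\le m_*\delta\le R$, and there $r_0(u)=Su^2/2$. So $|r_0|\le CS\delta^2$ and $|r_0'|\le CS\delta$ on that set. This gives a bound of
\[
\frac\mu{L^2}\,CS^2\delta^3=C\frac{\mu S\delta^2}{L}\,(\theta\delta).
\]
By the side condition $C\theta\delta\le1/4$ in \eqref{sg:eq:side}, this is again $O(\mu S\delta^2/L)$.

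\textbf{Conclusion.} Combining the two pieces and choosing $C_B$ large gives $\norm{\nabla\mathcal D}\le C_B\mu S\delta^2/L$. Substituting $\mu=L/\kappa$, $S=L\theta$ and $\delta=\gamma/(\kappa S)$ gives
\[
\frac{\mu S\delta^2}{L}=\frac{\gamma^2}{\kappa\cdot\kappa^2 S}=\frac{\gamma^2}{L\theta\kappa^3},
\]
which is the second expression in \eqref{sg:eq:B}.

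The only step needing care is the compensation cross-term, which relies on the support localization. Off the support of $r_1$, both $r_1$ and $r_1'$ vanish, so the bound holds globally, including for $|u|>R$ and for all $s$.
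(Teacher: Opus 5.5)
Your proof is correct and follows the paper's argument essentially step for step. You differentiate the coupling and compensation pieces, bound the coupling term through the $\delta$-scaling of $r_1$ and $q_\delta$, and control the cross term $r_0'r_1+r_0r_1'$ by localizing to $|u|\le m_*\delta\le R$, where $r_0=Su^2/2$; this leaves a factor $1+\theta\delta$, which the side condition makes $O(1)$, and the final substitution $\mu S\delta^2/L=\gamma^2/(L\theta\kappa^3)$ is correct.
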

\begin{proof}
Only the $u,s$ components are nonzero. Differentiation gives
\[
\mathcal D_u=\frac\mu L r_1'q_\delta-\frac\mu{L^2}(r_0'r_1+r_0r_1'),
\qquad
\mathcal D_s=\frac\mu L r_1q_\delta'.
\]
On the support of $r_1$ or its derivatives, the local bounds from the proof of Lemma~\ref{sg:lem:derivatives} give
\[
\norm{\nabla\mathcal D}\le C\frac{\mu S\delta^2}{L}
\left(1+\frac{S\delta}{L}\right).
\]
Since $S\delta/L=\theta\delta$ is small, this proves the claim. All estimates hold at every normal query, however large, because the normal features are bounded.
\end{proof}

For any $\sigma_g\ge0$, choose
\begin{equation}\label{sg:eq:oracle}
p=\frac{B^2}{\sigma_g^2+B^2},\qquad
\xi\sim\operatorname{Ber}(p),\qquad
g_{\omega,\xi}=\bar g+\omega\frac\xi p\mathcal D.
\end{equation}
Return the exact full gradient of $f$ and the full gradient of the fresh scalar sample potential $g_{\omega,\xi}$. Every sample is globally $C^3$. At every adaptive query,
\begin{equation}\label{sg:eq:variance}
\E[\nabla g_{\omega,\xi}\mid\text{past, query}]=\nabla g_\omega,
\quad
\E\norm{\nabla g_{\omega,\xi}-\nabla g_\omega}^2
=(p^{-1}-1)\norm{\nabla\mathcal D}^2\le\sigma_g^2.
\end{equation}
Strong convexity and uniform smoothness are properties of the mean objective. The sample gradients satisfy the global unbiasedness and variance requirements, and the exact upper response fits every nonnegative upper variance budget.

\subsection{A stopped-transcript lower bound}
\begin{theorem}[Fixed-dimensional finite-gap statistical lower bound]\label{sg:thm:lower}
Under \eqref{sg:eq:accuracy}, there is a distribution on the two fixed mean pairs above, each with the budgets \eqref{sg:eq:budgets} and gap at most $\gamma^2/(2\Gamma)$, such that every randomized adaptive SFO algorithm attaining $\E|F_\omega'(\widehat x)|\le\epsilon$ for both signs requires
\begin{equation}\label{sg:eq:lower}
\Omega\!\left(\frac1p\right)
=\Omega\!\left(1+\frac{L^2\theta^2\kappa^6\sigma_g^2}{\epsilon^4}\right)
\end{equation}
queries in the worst-case expected sense, and hence also in the fixed-horizon sense. The outer dimension is one and the lower dimension is three. The oracle is conservative.
\end{theorem}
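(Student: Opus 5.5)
The plan is a two-point (Le Cam–type) argument built on the construction above. Place the uniform prior on $\omega\in\{-1,+1\}$. The oracle in \eqref{sg:eq:oracle} returns the exact gradient of $f$, which is independent of $\omega$. The lower sample gradient is $\nabla\bar g+\omega(\xi/p)\nabla\mathcal D$. Hence whenever $\xi=0$ the response is the same deterministic function of the query under both signs.

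To formalize this, couple the two instances on a common probability space. Use the same algorithm randomness and the same Bernoulli sequence $(\xi_n)$, with one fresh draw per query. Let $\tau$ be the index of the first query with $\xi_n=1$. Before $\tau$ the two transcripts coincide identically, whatever adaptive queries are made, since the query at step $n$ is a measurable function of private randomness and earlier responses. Consequently, on the event $\{N<\tau\}$ the outputs $\widehat x$ coincide, and this event has the same law under both signs because it is determined by shared quantities. This is the stopped-transcript argument. It does not need any bound on query magnitude, because the equality $\nabla g_{\omega,0}=\nabla\bar g$ is global.

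Next, convert sign ignorance into error. On $\{N<\tau\}$ the common output $\widehat x$ satisfies $|F_+'(\widehat x)|+|F_-'(\widehat x)|\ge2\gamma=16\epsilon$ by Lemma~\ref{sg:lem:separation}. Averaging over signs gives
\[
\tfrac12\bigl(\E_+|F_+'(\widehat x)|+\E_-|F_-'(\widehat x)|\bigr)\ge 8\epsilon\,\PP(N<\tau).
\]
Since both expectations are at most $\epsilon$, this forces $\PP(N<\tau)\le1/8$.

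For the fixed-horizon case, $N\le n$ almost surely gives $\PP(N<\tau)\ge(1-p)^n\ge1-np$. Therefore $n\ge 7/(8p)$.

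For the worst-case expected complexity, suppose $\E N\le m$ under both signs. By Markov's inequality, $\PP(N>8m)\le1/8$. Also $\PP(\tau\le 8m)\le 8mp$ by a union bound over the first $8m$ draws; this bound is valid even though $N$ is adaptive, because $\xi_n$ is fresh and independent of the past. Hence
\[
\PP(N<\tau)\ge 1-\tfrac18-8mp,
\]
and combining with $\PP(N<\tau)\le 1/8$ yields $m\ge 3/(32p)=\Omega(1/p)$.

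It remains to evaluate $1/p$. By definition, $1/p=1+\sigma_g^2/B^2$. Lemma~\ref{sg:lem:envelope} gives $B\asymp\gamma^2/(L\theta\kappa^3)$ with $\gamma=8\epsilon$. Thus $1/B^2\asymp L^2\theta^2\kappa^6/\epsilon^4$, which gives \eqref{sg:eq:lower}.

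The remaining claims follow from earlier results. The budgets and the gap bound $\gamma^2/(2\Gamma)$ come from Theorem~\ref{sg:thm:mean} and \eqref{sg:eq:gap}. Conservativeness and the unbiasedness and variance conditions come from \eqref{sg:eq:variance}. The dimensions ($x\in\R$, $y\in\R^3$) are read off the construction.

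I expect the main obstacle to be the rigor of the coupling for randomized adaptive algorithms with an arbitrary measurable output. The key point is to define the transcript as a function of the shared randomness and check that the event $\{N<\tau\}$, including the stopping rule, is identical under both signs. Once the responses are shown to agree pathwise until $\tau$, this follows by induction on the query index.
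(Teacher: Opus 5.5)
Your proposal is correct and follows the paper's proof. It uses the same shared-seed, shared-Bernoulli coupling, under which transcripts, stopping decisions, and outputs coincide until the first activation, and the global separation of Lemma~\ref{sg:lem:separation} then turns this into expected error at least $\gamma\,\PP(E)$.

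The only difference is how you bound the activation probability. The paper uses the Tonelli (Wald-type) identity $\E\sum_{t=1}^{N}\xi_t=p\,\E N$ to get $\PP(E^c)\le p\,\E N$, which gives $\E N\ge 7/(8p)$ directly and covers the fixed-horizon case as a special case. Your Markov truncation plus union bound reaches the same $\Omega(1/p)$, with a worse constant.
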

\begin{proof}
Choose the sign uniformly before the algorithm seed and sample draws. Couple both runs using the same private seed and the same independent Bernoulli sequence. Until the first activation $\xi=1$, all returned vectors are exactly the gradients of $f,\bar g$ and are identical for the two signs. This identity holds at every query point.

Let $N$ be the algorithm's stopping time, and let $E$ be the event that no activation occurs before it stops. The joint distribution of $\mathbf1_E$ and the output on $E$ is the same for both signs: it is given by running the public shadow transcript and surviving its chosen number of calls. Lemma~\ref{sg:lem:separation} therefore gives
\begin{equation}\label{sg:eq:testing}
\E_{\omega,\mathcal A,\xi}|F_\omega'(\widehat x)|\ge\gamma\PP(E).
\end{equation}
Freshness and the predictable decision to make query $t$ imply, by Tonelli,
\[
\E\sum_{t=1}^{N}\xi_t=p\E N.
\]
Consequently $\PP(E^c)\le p\E N$, and
\[
\E_{\omega,\mathcal A,\xi}|F_\omega'(\widehat x)|\ge\gamma(1-p\E N).
\]
A guarantee of at most $\epsilon=\gamma/8$ forces $\E N\ge7/(8p)$. At least one sign has this expected cost. This proves the expected stopping-time bound directly in the fixed-dimensional family. Substitute \eqref{sg:eq:B} into $p^{-1}=1+\sigma_g^2/B^2$.
\end{proof}

\paragraph{A matching solver for the constructed family.}
For this explicit pair, the matching procedure also covers the constant-query noiseless regime. Query repeatedly at $x=u=z=0$, $s=\delta/8$. The shared lower $u$-gradient is zero there, while
\[
\mathcal D_u(0,\delta/8)=\frac{\mu S\delta^2}{8L}>0.
\]
The first nonzero lower $u$ response reveals $\omega$ exactly. It arrives after $1/p$ expected calls. After that, solve the public scalar equation $\Gamma x+P'(x)=\omega\gamma$ to any required precision using arithmetic; its solution is in the core. This procedure establishes the matching upper cost for the displayed two-point family and oracle.

\subsection{Matching at the small-gap scale}
\begin{corollary}[Leading complexity at the small-gap scale]\label{sg:cor:smallgap}
Fix numerical primitive-budget ratios, a finite lower initialization certificate, positive lower variance $\sigma_g^2$, and a finite upper variance budget. In the positive-variation normalized regime $\rho\gtrsim\mu$, $1\lesssim\chi\lesssim\kappa$, consider the original full-Euclidean class with
\begin{equation}\label{sg:eq:smallgap}
\Delta\asymp\frac{\epsilon^2}{L\chi\kappa^2},
\end{equation}
where the fixed gap multiplier is large enough to include the two-point construction. For sufficiently high accuracy, its leading minimax SFO complexity satisfies
\begin{equation}\label{sg:eq:match}
\boxed{\quad
\tTheta\!\left(\frac{L^2\chi^2\kappa^6\sigma_g^2}{\epsilon^4}\right).
\quad}
\end{equation}
The lower bound also holds for conservative sample-gradient oracles. The comparison uses the original primitive assumptions, the displayed accuracy-dependent gap scale, and the leading stochastic term at fixed positive lower variance.
\end{corollary}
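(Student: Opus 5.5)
The plan is to bracket the minimax complexity by combining the two-point lower bound of \cref{sg:thm:lower} with the upper bound \eqref{up:eq:upper-general} evaluated at the small-gap scale.

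\textbf{Lower bound.} I would invoke \cref{sg:thm:lower}, with the budget normalization paragraph placing the two-point family inside the prescribed primitive budgets and with $\theta\asymp\chi-1\asymp\chi$ in the range $\rho\gtrsim\mu$. The remaining checks are these.
\begin{itemize}
\item The gap of each instance is at most $\gamma^2/(2\Gamma)=32\epsilon^2/(L\theta\kappa^2)$. This is $O(\epsilon^2/(L\chi\kappa^2))$, so it fits under \eqref{sg:eq:smallgap} once the gap multiplier is large enough.
\item The initialization certificate holds with $R_0=0$, since $y^*(0)=0$ is known exactly.
\item The upper oracle is exact, so any upper variance budget is satisfied.
\item The accuracy condition \eqref{sg:eq:accuracy} is exactly the high-accuracy hypothesis.
\end{itemize}
Then \eqref{sg:eq:lower} gives $\Omega(L^2\chi^2\kappa^6\sigma_g^2\epsilon^{-4})$ for conservative oracles, in both the expected and fixed-horizon senses.

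\textbf{Upper bound.} I would apply \cref{up:thm:frozen-upper} with the universal curvature bound $\Lambda=\Lambda_{\rm univ}\asymp L\chi\kappa^2$ from \eqref{up:eq:Lambda-universal}.
\begin{itemize}
\item With $\Delta\asymp\epsilon^2/(L\chi\kappa^2)$, the outer factor $1+\Lambda\Delta/\epsilon^2$ is $O(1)$. This is the key point: the small-gap scale makes the outer count constant.
\item What remains is the per-step cost \eqref{sg:eq:perstep}. Using $E_\rho\asymp L\chi\kappa^2$ (from $(1+k)\rho C_f^2/\mu^2$ with $C_f,L_f,L_g=O(L)$), its leading term is
\[
(1+k)^2E_\rho^2\sigma_g^2/\epsilon^4\asymp \kappa^2\cdot L^2\chi^2\kappa^4\sigma_g^2\epsilon^{-4}=L^2\chi^2\kappa^6\sigma_g^2\epsilon^{-4}.
\]
\item The other terms are lower order at fixed positive $\sigma_g$ and small $\epsilon$: $(1+k)$, the $\kappa^2\sigma_f^2\epsilon^{-2}$ term, and the $\kappa^4\sigma_g^2\epsilon^{-2}$ term.
\item The logarithms depend only on the initialization certificate and accuracy, which gives the $\tTheta$.
\end{itemize}

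\textbf{Main obstacle.} The delicate step is the parameter bookkeeping in the budget normalization. The internal $L,\mu,\theta$ of the two-point construction must be rescaled so that the instance satisfies \cref{ass:primitive} with the target constants, including $\Lip(\nabla^2 g)\le\rho$ and the bound on $C_f$. At the same time $\theta$, $\kappa$ and the gap must change only by numerical factors, so that the lower exponent $\chi^2\kappa^6$ survives and the gap still lies under the fixed multiplier.

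I would first fix the internal constants to absorb the $C$'s in \eqref{sg:eq:budgets}. I would then choose the gap multiplier in \eqref{sg:eq:smallgap} to be at least $32$ times the resulting numerical ratio. Finally, I would confirm that the upper bound's $E_\rho$ for the original class is at most $O(L\chi\kappa^2)$, so that the upper and lower leading coefficients agree up to numerical factors.
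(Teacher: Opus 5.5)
Your proposal is correct and follows the paper's proof essentially step for step. For the lower bound, you invoke Theorem~\ref{sg:thm:lower} with $\theta\asymp\chi$ and the fixed budget normalization, and you use the gap bound $\gamma^2/(2\Gamma)=32\epsilon^2/(L\theta\kappa^2)$ from Lemma~\ref{sg:lem:separation} to fit under the small-gap budget; for the upper bound, you take $\Lambda=O(L\chi\kappa^2)$ to make $1+\Lambda\Delta/\epsilon^2=O(1)$, and your computation $E_\rho\asymp L\chi\kappa^2$ identifies the dominant per-step term $L^2\chi^2\kappa^6\sigma_g^2\epsilon^{-4}$ with the same lower-order terms as \eqref{sg:eq:smallupper}.
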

\begin{proof}
For the lower bound choose $\theta\asymp\chi$ and the raw budgets in Theorem~\ref{sg:thm:mean}, using the fixed numerical normalization described there. Lemma~\ref{sg:lem:separation} fits the pair inside the gap budget \eqref{sg:eq:smallgap}; Theorem~\ref{sg:thm:lower} gives the asserted noisy term against every randomized adaptive algorithm.

For the upper bound the original assumptions imply $\Lambda=O(L\chi\kappa^2)$, so $1+\Lambda\Delta/\epsilon^2=O(1)$ throughout \eqref{sg:eq:smallgap}. Lemma~\ref{sg:lem:upper} yields
\begin{equation}\label{sg:eq:smallupper}
\tO\!\left(\kappa+\frac{\kappa^2\sigma_f^2}{\epsilon^2}
+\frac{\kappa^4\sigma_g^2}{\epsilon^2}
+\frac{L^2\chi^2\kappa^6\sigma_g^2}{\epsilon^4}\right).
\end{equation}
For fixed positive $\sigma_g$ and the other stated ratios, the last term dominates as $\epsilon$ decreases within \eqref{sg:eq:accuracy}. This proves the leading comparison. For accuracy-dependent parameters, the same comparison applies whenever the displayed last term dominates the other terms.
\end{proof}

\begin{remark}[The zero-query gap threshold]
For any lower-bounded $\Lambda$-smooth hyperobjective,
\[
\norm{\nabla F(x_0)}^2\le2\Lambda(F(x_0)-\inf F).
\]
The smoothness descent inequality proves this bound. Consequently, returning $x_0$ meets the target with zero queries when $\Delta\le\epsilon^2/(2\Lambda)$. Corollary~\ref{sg:cor:smallgap} chooses its gap multiplier above this threshold.
\end{remark}

\section{Zero-Variation Complexity}\label{app:zero-variation}
\subsection{A sharp lower bound for unknown affine offsets}
A scalar Gaussian location experiment supplies the calibration lower bound. We first embed it in a globally regular bilevel family and then relate estimation error to the outer gradient.

Fix $a,\ell,C>0$, $0<\mu\le M$, $\sigma>0$, $\Delta>0$, and $R_0>0$. Let
\begin{equation}\label{zv:eq:bmax}
b_{\max}=\mu\min\{R_0,C/\ell,\sqrt{2\Delta/\ell}\},
\qquad K_{\rm off}=\frac{a\ell}{\mu^2}.
\end{equation}
For $b\in[-b_{\max},b_{\max}]$, use one outer variable $x$, two lower variables $(y,z)$, and
\begin{align}
f(x,y,z)&=\varphi(y):=\frac{C^2}{\ell}\log\cosh(\ell y/C),\label{zv:eq:offset-f}\\
g_b(x,y,z)&=\frac\mu2y^2-axy+by+\frac M2z^2.\label{zv:eq:offset-g}
\end{align}
The $z$ coordinate supplies the largest lower curvature $M$. Return exact gradients of $f$ and sample gradients of
\begin{equation}\label{zv:eq:offset-sample}
g_{b,\xi}(x,y,z)=g_b(x,y,z)+\xi y,\qquad \xi\sim N(0,\sigma^2),
\end{equation}
independently on every call.

\begin{lemma}[Global admissibility and exact objective]\label{zv:lem:offset-admissible}
The functions have $\rho=0$, lower strong convexity $\mu$, joint gradient smoothness $O(M+a+\mu)$, $\norm{\nabla f}\le C$, and $\Lip(\nabla f)\le\ell$. Every sample is a scalar smooth function with the same Hessian as its mean. Its full gradient noise variance is $\sigma^2$. Moreover,
\begin{equation}\label{zv:eq:offset-response}
y_b^*(x)=\frac{ax-b}{\mu},\qquad z_b^*(x)=0,\qquad
F_b(x)=\varphi((ax-b)/\mu),
\end{equation}
\begin{equation}\label{zv:eq:offset-grad}
F_b'(x)=\frac{aC}{\mu}\tanh\!\left(\frac{\ell(ax-b)}{\mu C}\right).
\end{equation}
The initial point $(x_0,y_{\rm init},z_{\rm init})=(0,0,0)$ has initial lower distance at most $R_0$ and hyperobjective gap at most $\Delta$.
\end{lemma}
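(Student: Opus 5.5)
The proof is a direct verification, since every object in \eqref{zv:eq:offset-f}--\eqref{zv:eq:offset-sample} is explicit. We take the claims in the order regularity, sample structure, exact response, and initialization.

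\emph{Regularity.} The lower objective $g_b$ is a quadratic in $(x,y,z)$. Its Hessian is therefore constant, which gives $\rho=0$ immediately. The lower block is $\operatorname{diag}(\mu,M)\succeq\mu I$ because $\mu\le M$, so $g_b(x,\cdot)$ is $\mu$-strongly convex. The full Hessian of $g_b$ has entries $\mu$, $-a$, $M$ and zero elsewhere, so its operator norm is at most $\mu+a+M$; this gives the stated joint gradient smoothness. For the upper function we compute $\varphi'(y)=C\tanh(\ell y/C)$ and $\varphi''(y)=\ell\operatorname{sech}^2(\ell y/C)$. Hence $\norm{\nabla f}\le C$ and $\Lip(\nabla f)\le\ell$.

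\emph{Sample structure.} Each sample $g_{b,\xi}=g_b+\xi y$ differs from its mean by a linear term, so it is a globally smooth scalar function with the same Hessian as $g_b$. Its gradient equals $\nabla g_b+\xi e_y$, which is unbiased with noise variance $\E\xi^2=\sigma^2$ at every query.

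\emph{Exact response and hypergradient.} Setting $\partial_yg_b=\mu y-ax+b=0$ and $\partial_zg_b=Mz=0$ gives \eqref{zv:eq:offset-response}. Substituting into $f$ yields $F_b(x)=\varphi((ax-b)/\mu)$, and the chain rule then gives \eqref{zv:eq:offset-grad}. The value $K_{\rm off}$ need not be used here.

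\emph{Initialization certificates.} The initial lower distance is $|y_b^*(0)|=|b|/\mu\le R_0$, using $b_{\max}\le\mu R_0$. For the gap, $\varphi\ge0$ with $\inf\varphi=0$, attained at $x=b/a$. Since $\log\cosh s\le s^2/2$, we get $\varphi(y)\le \ell y^2/2$. Therefore
\[
F_b(0)-\inf F_b=\varphi(-b/\mu)\le\frac{\ell b^2}{2\mu^2}\le\Delta,
\]
by $b_{\max}\le\mu\sqrt{2\Delta/\ell}$.

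No step is a genuine obstacle. The only place needing care is matching each factor in the $\min$ defining $b_{\max}$ to its certificate. The factor $C/\ell$ is not needed for the present lemma; it ensures $\ell|b|/(\mu C)\le1$, so that $\tanh$ stays in its near-linear range. That fact should be recorded here for the later estimation-to-gradient step.
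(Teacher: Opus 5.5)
Your proposal is correct and follows the paper's proof essentially step by step. The shared steps are: constant Hessian blocks giving $\rho=0$, the explicit derivatives $\varphi'=C\tanh(\ell y/C)$ and $0\le\varphi''\le\ell$, a linear sample perturbation, solving the lower stationarity equations, and the bound $\log\cosh t\le t^2/2$ combined with the $R_0$ and $\sqrt{2\Delta/\ell}$ factors of $b_{\max}$. Your remark that the $C/\ell$ factor is not needed here, and only serves later to keep $\ell|b|/(\mu C)\le1$ in the risk lower bound, is also accurate.
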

\begin{proof}
Here $\varphi'=C\tanh(\ell y/C)$ and $0\le\varphi''\le\ell$. The lower Hessian and all mixed Hessian blocks are constant. The only sample error is the scalar $y$-component $\xi$. Solving the lower equations proves \eqref{zv:eq:offset-response}--\eqref{zv:eq:offset-grad}. The minimum is zero, attained at $x=b/a$. Since $\log\cosh t\le t^2/2$,
\[
F_b(0)-\inf F_b\le\frac{\ell b^2}{2\mu^2}\le\Delta,
\qquad \norm{y_b^*(0)}=|b|/\mu\le R_0.
\]
These estimates hold over the full Euclidean domain.
\end{proof}

\begin{lemma}[Every query is the same location experiment]\label{zv:lem:experiment}
Even if all structural parameters and the form of \eqref{zv:eq:offset-f}--\eqref{zv:eq:offset-g} are revealed, except $b$, every adaptive SFO call contains exactly one nontrivial scalar observation about $b$:
\begin{equation}\label{zv:eq:location}
\widehat g_y(x,y,z)-\mu y+ax=b+\xi.
\end{equation}
All other returned components are functions of the query and known parameters. The offset observation has the same signal-to-noise ratio at every query magnitude.
\end{lemma}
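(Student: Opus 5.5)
The plan is to compute the oracle response explicitly at an arbitrary query $(x,y,z)$ and to sort its components into those determined by public data and the single one that depends on $b$. First, the upper oracle returns exact gradients. From \eqref{zv:eq:offset-f}, $\nabla f(x,y,z)=(0,\varphi'(y),0)$ with $\varphi'(y)=C\tanh(\ell y/C)$. This is a known function of the query and does not involve $b$. Second, by \eqref{zv:eq:offset-sample} the sample potential is $g_b+\xi y$, and its full gradient is
\[
\nabla g_{b,\xi}(x,y,z)=\bigl(-ay,\ \mu y-ax+b+\xi,\ Mz\bigr).
\]
The $x$- and $z$-components are deterministic functions of the query and of the revealed parameters $a,M$, so they carry no information. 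Subtracting the known quantity $\mu y-ax$ from the $y$-component gives exactly \eqref{zv:eq:location}.

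Next, I would state precisely what ``same experiment'' means. The $b$-dependent content of a call is the variable $b+\xi$. Because the calls are independent, $\xi$ is a fresh $N(0,\sigma^2)$ variable, independent of the past transcript and of the query. The query is predictable, since it is a measurable function of the past observations and the private randomness. Therefore, conditionally on the past, the law of the $t$-th transcript increment equals a known measurable map applied to $(\text{query}_t,\,b+\xi_t)$. This law is $N(b,\sigma^2)$ regardless of the query point.

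The consequence is that, after the invertible public reparametrization $\widehat g_y\mapsto \widehat g_y-\mu y+ax$, the whole transcript is a measurable function of the private seed and of the i.i.d.\ observations $b+\xi_1,\dots,b+\xi_N$. In particular, the signal $b$ and the noise level $\sigma$ do not depend on $\norm{(x,y,z)}$. Amplified queries therefore cannot raise the signal-to-noise ratio. This is the property that downstream Gaussian location lower bounds will rely on.

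I do not expect a real obstacle; the computation is direct. The only care needed is in the measure-theoretic claim that adaptivity adds nothing. I would settle this by induction on the number of calls, using freshness of the noise to factor the conditional density of each increment as the Gaussian density in $b$ times a kernel that does not depend on $b$.
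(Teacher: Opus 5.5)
Your proposal is correct and follows essentially the same argument as the paper. Both compute the returned components: $g_x=-ay$, $g_z=Mz$, and the upper gradient $\varphi'(y)$ are known functions of the query, and $\widehat g_y-\mu y+ax=b+\xi$, so the whole transcript can be simulated from one fresh $N(b,\sigma^2)$ observation per call. Your predictability and induction argument for adaptivity is a more explicit version of the paper's one-line simulation remark.
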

\begin{proof}
The other lower components are $g_x=-ay$ and $g_z=Mz$, while the full upper response is known from $y$. Thus the entire transcript can be simulated using one independent $N(b,\sigma^2)$ observation per call. Conversely, one obtains such an observation by querying at zero.
\end{proof}

For algorithms using at most $N\ge1$ calls define
\[
\mathcal R_N=\inf_{\mathcal A}\sup_{|b|\le b_{\max}}
\E_b|F_b'(\widehat x)|.
\]
The infimum ranges over randomized adaptive algorithms with full-Euclidean queries and outputs.

\begin{theorem}[Complete risk scaling of the offset family]\label{zv:thm:offset-risk}
There is a numerical $c>0$ such that
\begin{equation}\label{zv:eq:offset-risk}
 cK_{\rm off}\min\{b_{\max},\sigma/\sqrt N\}
 \ \le\ \mathcal R_N
 \ \le\ K_{\rm off}\min\{b_{\max},\sigma/\sqrt N\}.
\end{equation}
The lower bound holds even when $a,\ell,C,\mu,M$ and the function family are known. It therefore also holds in the original less-informed problem class containing these instances.
\end{theorem}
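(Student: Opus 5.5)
The plan reduces both directions to the scalar Gaussian location experiment supplied by Lemma~\ref{zv:lem:experiment}. We then translate estimation error for $b$ into outer-gradient error through the explicit formula \eqref{zv:eq:offset-grad}. The constant $K_{\rm off}=a\ell/\mu^2$ is the slope of $b\mapsto F_b'(x)$ near the stationary point, since $\frac{d}{db}\frac{aC}{\mu}\tanh(\ell(ax-b)/(\mu C))=-\frac{a\ell}{\mu^2}\operatorname{sech}^2(\cdot)$.

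\emph{Upper bound.} We use two procedures and take the better one.

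The zero-query option outputs $\widehat x=0$. Then $|F_b'(0)|=\frac{aC}{\mu}|\tanh(\ell b/(\mu C))|\le \frac{a\ell}{\mu^2}|b|\le K_{\rm off}b_{\max}$, using $|\tanh t|\le|t|$.

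The estimation option queries $N$ times at zero. By \eqref{zv:eq:location}, this yields $N$ i.i.d.\ $N(b,\sigma^2)$ observations. Let $\bar b$ be their sample mean, optionally clipped to $[-b_{\max},b_{\max}]$, and output $\widehat x=\bar b/a$. Then $|F_b'(\widehat x)|\le K_{\rm off}|\bar b-b|$, again by $|\tanh t|\le |t|$. Taking expectations gives $K_{\rm off}\E|\bar b-b|\le K_{\rm off}\sigma/\sqrt N$.

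Choosing whichever option gives the smaller bound (the choice depends only on public parameters) yields the right-hand inequality of \eqref{zv:eq:offset-risk}.

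\emph{Lower bound.} We use Le Cam's two-point method. Set $b_\pm=\pm\beta$ with $\beta=c_0\min\{b_{\max},\sigma/\sqrt N\}$. Lemma~\ref{zv:lem:experiment} shows that any adaptive transcript is a measurable function of private randomness and $N$ i.i.d.\ $N(b,\sigma^2)$ draws. The data-processing inequality then bounds the total variation distance between the two transcript laws by that between the two $N$-sample Gaussian product laws. Pinsker's inequality bounds the latter by $\sqrt{N(2\beta)^2/(4\sigma^2)}\le c_0$. Adaptivity costs nothing here, because the observation does not depend on the query.

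For any output $x$, we need a separation bound $|F_{b_+}'(x)|+|F_{b_-}'(x)|\ge cK_{\rm off}\beta$. To prove it, let $t_\pm=\ell(ax-b_\pm)/(\mu C)$, so that $t_--t_+=2\ell\beta/(\mu C)$. The condition $b_{\max}\le\mu C/\ell$ from \eqref{zv:eq:bmax} makes this difference at most $2$. Take two cases.
\begin{itemize}
\item If both $|t_\pm|\le 2$, then $\tanh$ has slope at least $\operatorname{sech}^2 2$ on this interval, so $|\tanh t_-|+|\tanh t_+|\ge|\tanh t_--\tanh t_+|\ge c\,|t_--t_+|$.
\item Otherwise one of $|t_\pm|$ exceeds $2$, so that term alone contributes at least $\tanh 2\ge c|t_--t_+|/2$.
\end{itemize}
Multiplying by $aC/\mu$ gives the claimed separation. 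The standard averaging argument then yields $\max_\pm\E|F_{b_\pm}'(\widehat x)|\ge \tfrac12(1-\mathrm{TV})\,cK_{\rm off}\beta$, which proves the left-hand inequality.

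\emph{Main obstacle.} The step needing most care is the coupling claim that adaptive queries of arbitrary magnitude give no more information than $N$ Gaussian draws. Lemma~\ref{zv:lem:experiment} supplies this directly. Beyond that, the remaining work is verifying that the $\tanh$ saturation does not destroy the separation, and that is handled by the constraint $b_{\max}\le\mu C/\ell$ in \eqref{zv:eq:bmax}.
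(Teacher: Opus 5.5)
Your proposal is correct and follows the paper's proof: the same zero-output and projected-mean procedures give the upper bound, and the lower bound is the same two-point test $b=\pm\beta$, with the transcript reduced via Lemma~\ref{zv:lem:experiment} to $N$ Gaussian location draws and then bounded by Pinsker's inequality. The only difference is in the last step. The paper classifies the sign of $b$ by the sign of $\widehat x$ and uses $|a\widehat x-b|\ge\delta$ on an error together with $\tanh t\ge t/2$ on $[0,1]$; you instead prove the pointwise separation $|F_{b_+}'(x)|+|F_{b_-}'(x)|\ge cK_{\rm off}\beta$ by a slope/saturation case split and apply the overlap form of Le Cam's bound, which is equally valid.
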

\begin{proof}
For the upper bound, either output zero, or average $N$ observations in \eqref{zv:eq:location}, project onto $[-b_{\max},b_{\max}]$ to obtain $\widehat b$, and output $\widehat b/a$. Because $|\tanh t|\le|t|$ and projection cannot increase error from a feasible $b$,
\[
\E_b|F_b'(\widehat b/a)|
\le K_{\rm off}\E_b|\widehat b-b|
\le K_{\rm off}\sigma/\sqrt N.
\]
The zero output has risk at most $K_{\rm off}b_{\max}$.

For the lower bound use hypotheses $b=\pm\delta$, where
\[
\delta=\min\{b_{\max},\sigma/(4\sqrt N)\}.
\]
By the conditional KL chain rule, or directly by Lemma~\ref{zv:lem:experiment}, the KL divergence of the complete transcripts is at most
\[
\frac{2N\delta^2}{\sigma^2}\le\frac18.
\]
Include the independent private seed in the transcript. Pinsker's inequality gives total variation at most $1/4$, hence every test has average error at least $3/8$. Classify the sign of $b$ from the sign of $\widehat x$, breaking a zero tie toward the positive sign. On a classification error, $|a\widehat x-b|\ge\delta$. Since $\ell\delta/(\mu C)\le1$ and $\tanh t\ge t/2$ for $0\le t\le1$,
\[
|F_b'(\widehat x)|\ge\frac{aC}{\mu}
\tanh\left(\frac{\ell\delta}{\mu C}\right)
\ge\frac12K_{\rm off}\delta.
\]
The Bayes average expected loss is therefore at least $3K_{\rm off}\delta/16$. Finally $\delta\ge\frac14\min\{b_{\max},\sigma/\sqrt N\}$. This proves \eqref{zv:eq:offset-risk}, for example with $c=3/64$.
\end{proof}

\begin{corollary}[Sharp calibration complexity]\label{zv:cor:calibration}
For $0<\epsilon\le c_1K_{\rm off}b_{\max}$, any algorithm guaranteeing $\E|F_b'(\widehat x)|\le\epsilon$ uniformly needs
\begin{equation}\label{zv:eq:offset-lower}
\Omega\!\left(\frac{\sigma^2a^2\ell^2}{\mu^4\epsilon^2}\right)
\end{equation}
queries, with ceilings in the nontrivial range. A projected-mean algorithm attains the same order for this family. With $a,\ell,C,M\asymp L$ and $\mu=L/\kappa$, this is
\begin{equation}\label{zv:eq:kappa4}
\boxed{\ \Theta(\kappa^4\sigma_g^2\epsilon^{-2})\ }
\end{equation}
for the unknown-offset task. A fixed rescaling fits the prescribed smoothness budgets.
\end{corollary}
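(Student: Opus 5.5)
The plan is to read the corollary off \cref{zv:thm:offset-risk} by inverting the risk scaling, and then to substitute the normalized parameters.

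\emph{Lower bound.} Take any algorithm that uses at most $N$ calls and guarantees $\E|F_b'(\widehat x)|\le\epsilon$ uniformly over $|b|\le b_{\max}$. Then $\mathcal R_N\le\epsilon$, and the left inequality of \eqref{zv:eq:offset-risk} gives
\[
cK_{\rm off}\min\{b_{\max},\sigma/\sqrt N\}\le\epsilon .
\]
Choose $c_1<c$. The hypothesis $\epsilon\le c_1K_{\rm off}b_{\max}$ then excludes the branch where the minimum equals $b_{\max}$, since that branch would force $\epsilon\ge cK_{\rm off}b_{\max}>c_1K_{\rm off}b_{\max}$. So the minimum must be $\sigma/\sqrt N$, and
\[
N\ge c^2K_{\rm off}^2\sigma^2/\epsilon^2=c^2\sigma^2a^2\ell^2/(\mu^4\epsilon^2).
\]
This is \eqref{zv:eq:offset-lower} in the fixed-horizon sense.

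For worst-case expected query counts I would use the same truncation-and-Markov argument as in \cref{lb:thm:multiscale-lower}. Run an expected-cost-$\bar N$ solver, but stop it at $8\bar N$ calls and output $0$ if it is stopped. The stop occurs with probability at most $1/8$. On that event the loss is at most $\sup|F_b'|\le aC/\mu$. This is comparable to $K_{\rm off}b_{\max}$ only up to the factor $C\mu/(\ell b_{\max})\ge1$, so the bound needs care.

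The cleaner route is to redo the two-point argument in the proof of \cref{zv:thm:offset-risk} directly with a stopping time. For a stopped Gaussian location transcript, the KL chain rule gives divergence $2\delta^2\E N/\sigma^2$, and Wald's identity handles the random stopping. The rest of that proof then goes through with $N$ replaced by $\E N$. I expect this stopping-time adjustment to be the only delicate step.

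\emph{Upper bound.} Use the projected-mean algorithm from the proof of \cref{zv:thm:offset-risk}. With $N=\lceil K_{\rm off}^2\sigma^2/\epsilon^2\rceil$ queries, the right inequality of \eqref{zv:eq:offset-risk} gives risk at most $K_{\rm off}\sigma/\sqrt N\le\epsilon$. This matches the lower order, with ceilings as stated.

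\emph{Normalized form.} Substitute $a,\ell\asymp L$ and $\mu=L/\kappa$. Then $K_{\rm off}=a\ell/\mu^2\asymp\kappa^2$, and the count becomes $\Theta(\kappa^4\sigma_g^2\epsilon^{-2})$ with $\sigma=\sigma_g$. For admissibility, \cref{zv:lem:offset-admissible} gives gradient smoothness $O(M+a+\mu)=O(L)$. Multiplying $a,\ell,M$ by a fixed numerical factor places the instance within the prescribed budgets. This changes $K_{\rm off}$ only by a constant, so the boxed rate in \eqref{zv:eq:kappa4} is preserved.
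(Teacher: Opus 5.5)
Your fixed-horizon inversion of \eqref{zv:eq:offset-risk}, with $c_1<c$ chosen to exclude the capped branch, matches the paper's proof. So do the projected-mean upper bound and the normalization $K_{\rm off}\asymp\kappa^2$.

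The one genuine difference is the expected-cost extension.

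\textbf{The paper's route.} The paper truncates the solver, but it charges the truncation to a $0$--$1$ sign test rather than to the gradient loss.
\begin{enumerate}
\item It takes $\delta$ to be a large fixed multiple of $\epsilon/K_{\rm off}$, so that $|F_{\pm\delta}'(\widehat x)|\le 4\epsilon$ determines the sign of $b$.
\item Markov's inequality then gives constant sign-success probability.
\item Truncating at a constant multiple of the expected cost $\bar N$ lowers this success probability only by a constant.
\item This contradicts the fixed-horizon testing bound.
\end{enumerate}

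\textbf{Your route.} You apply the KL chain rule to the stopped transcript,
\[
\mathrm{KL}=\E_{+\delta}[N]\cdot 2\delta^2/\sigma^2\le 2\bar N\delta^2/\sigma^2,
\]
and rerun the two-point argument with $\bar N$ in place of $N$. This is valid:
\begin{itemize}
\item The worst-case expected-cost constraint gives $\E_b N<\infty$, so $N$ is almost surely finite and Wald's identity applies.
\item \Cref{zv:lem:experiment} makes the conditional KL of every call exactly $2\delta^2/\sigma^2$, whatever the adaptive query.
\end{itemize}

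\textbf{Comparison.} Your route gives the full risk scaling $cK_{\rm off}\min\{b_{\max},\sigma/\sqrt{\bar N}\}$ for stopping-time algorithms in one step. The paper's route uses the fixed-horizon bound as a black box and avoids stopped-process identities, at the cost of the Markov and sign-test bookkeeping.

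\textbf{A correction on the naive truncation.} Outputting $0$ costs only $|F_b'(0)|\le K_{\rm off}|b|\le K_{\rm off}b_{\max}$, not $aC/\mu$. The real obstruction is different. A constant truncation probability times a loss of order $K_{\rm off}b_{\max}$ swamps $\epsilon$ in the high-accuracy range, since there $K_{\rm off}b_{\max}\gg\epsilon$. Forcing the truncation probability below $\epsilon/(K_{\rm off}b_{\max})$ would inflate the horizon and lose the $\epsilon^{-2}$ order. This is exactly why the paper switches to the sign-test loss.
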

\begin{proof}
Solve \eqref{zv:eq:offset-risk} for $N$. The high-accuracy condition ensures that its capped branch cannot already meet the target. The upper part of the same theorem gives the matching algorithm. When $R_0=0$, $b_{\max}=0$ and the known offset has zero calibration cost.
\end{proof}
The expected-cost version follows by truncation: use $\delta$ a sufficiently large fixed multiple of $\epsilon/K_{\rm off}$, so a gradient at most $4\epsilon$ determines the sign. The expected-gradient guarantee gives constant sign success by Markov; a small expected stopping time would preserve that success after truncation and contradict the fixed-horizon testing bound. The truncation argument uses the induced sign test and its constant success probability.

\subsection{Amplified first-order operator actions}
For the entire class with $\rho=0$, jointly constant Hessian blocks give an affine lower stationarity map:
\begin{equation}\label{zv:eq:affine}
g_y(x,y)=Hy+B^\top x+b,\qquad H\succeq\mu I,\qquad \norm H,\norm B\le L_g.
\end{equation}
Let $H,B,b$ be as in \eqref{zv:eq:affine}. For any requested vector $v$, two fresh calls give
\begin{align}
\widehat{Hv}&=\frac{\widehat g_y(0,Av)-\widehat g_y(0,-Av)}{2A},\label{zv:eq:Haction}\\
\widehat{B^\top x}&=\frac{\widehat g_y(Ax,0)-\widehat g_y(-Ax,0)}{2A},\label{zv:eq:BTaction}\\
\widehat{Bv}&=\frac{\widehat g_x(0,Av)-\widehat g_x(0,-Av)}{2A}.\label{zv:eq:Baction}
\end{align}
Each displayed pair uses its own two independent calls. Its conditional mean is the indicated linear action and its variance is at most $\sigma_g^2/(2A^2)$. The offset cancels in the first two differences, and $\nabla g_0(0)$ cancels in the third.

\begin{remark}[Real-arithmetic query accounting]
In the full-Euclidean, global-variance model, each finite amplification $A$ uses the same two queries. The guarantee measures oracle calls under real arithmetic. Each action attains a prescribed positive mean-square tolerance, which is propagated through the subsequent finite computation. These identities use the affine structure at $\rho=0$.
\end{remark}

\subsection{Accelerated inverse application}
Let $\mu I\preceq H\preceq MI$ and $k=M/\mu$. Polynomial approximation gives the $\sqrt{k}$ inner cost.

\begin{lemma}[An inverse polynomial]\label{zv:lem:polynomial}
For $k>1$ let $q=(\sqrt k-1)/(\sqrt k+1)$. For every $m\ge1$ there is a polynomial $p_{m-1}$ of degree at most $m-1$ such that
\begin{equation}\label{zv:eq:polyerror}
\sup_{\lambda\in[\mu,M]}|p_{m-1}(\lambda)-1/\lambda|
\le\frac2\mu q^m,
\qquad
\sup_{\lambda\in[\mu,M]}|p_{m-1}(\lambda)|\le\frac3\mu.
\end{equation}
Its coefficients depend only on the public endpoints $\mu,M$.
\end{lemma}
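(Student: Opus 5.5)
We build $p_{m-1}$ from the shifted Chebyshev polynomial. Map $[\mu,M]$ affinely onto $[-1,1]$ by
\[
\ell(\lambda)=\frac{M+\mu-2\lambda}{M-\mu},
\]
which sends $\lambda=0$ to $\ell(0)=\frac{k+1}{k-1}>1$. Define the residual polynomial
\[
r_m(\lambda)=\frac{T_m(\ell(\lambda))}{T_m(\ell(0))},
\]
where $T_m$ is the degree-$m$ Chebyshev polynomial. It satisfies $r_m(0)=1$, so $1-r_m(\lambda)$ vanishes at zero. Hence
\[
p_{m-1}(\lambda)=\frac{1-r_m(\lambda)}{\lambda}
\]
is a polynomial of degree at most $m-1$. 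Its coefficients depend only on $\mu$ and $M$ through $\ell$.

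For the first estimate, note the identity $p_{m-1}(\lambda)-1/\lambda=-r_m(\lambda)/\lambda$. For $\lambda\in[\mu,M]$ we have $|T_m(\ell(\lambda))|\le1$. It therefore suffices to lower bound $T_m(\ell(0))$.

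Write $\ell(0)=\cosh\eta$. For $z\ge1$ we have the formula
\[
T_m(z)=\tfrac12\bigl[(z+\sqrt{z^2-1})^m+(z-\sqrt{z^2-1})^m\bigr]\ge\tfrac12(z+\sqrt{z^2-1})^m.
\]
With $z=\frac{k+1}{k-1}$ one computes $\sqrt{z^2-1}=\frac{2\sqrt k}{k-1}$. This gives
\[
z+\sqrt{z^2-1}=\frac{(\sqrt k+1)^2}{k-1}=\frac{\sqrt k+1}{\sqrt k-1}=q^{-1}.
\]
Hence $|r_m(\lambda)|\le 2q^m$ on $[\mu,M]$. Dividing by $\lambda\ge\mu$ yields the bound $2q^m/\mu$.

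For the second estimate, we use $|p_{m-1}(\lambda)|\le 1/\lambda+|r_m(\lambda)|/\lambda$. Since $q<1$, this is at most $(1+2q^m)/\mu$, which is at most $3/\mu$.

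There is no real obstacle here. The only step requiring care is the algebraic identity $z+\sqrt{z^2-1}=q^{-1}$, which we verify directly. We also note that the case $k>1$ guarantees $M>\mu$, so $\ell$ is well defined.
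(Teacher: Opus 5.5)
Your proof is correct and follows essentially the same route as the paper. Both use the normalized shifted Chebyshev residual $r_m$ and the polynomial $p_{m-1}=(1-r_m)/\lambda$, and both bound $T_m\bigl(\frac{k+1}{k-1}\bigr)\ge\frac12 q^{-m}$ (the paper via the exact identity $\frac12(q^{-m}+q^m)$) to get $|r_m|\le 2q^m$ on $[\mu,M]$, from which both estimates follow.
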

\begin{proof}
With $T_m$ the Chebyshev polynomial, define
\[
r_m(\lambda)=
\frac{T_m((M+\mu-2\lambda)/(M-\mu))}
{T_m((M+\mu)/(M-\mu))}.
\]
Since $r_m(0)=1$, the function $(1-r_m(\lambda))/\lambda$ is a polynomial of degree at most $m-1$. Take it as $p_{m-1}$. On $[\mu,M]$ the numerator has magnitude at most one. The identity
\[
T_m((M+\mu)/(M-\mu))=\tfrac12(q^{-m}+q^m)
\]
gives $|r_m|\le2q^m$, proving both inequalities.
\end{proof}

\begin{samepage}\begin{lemma}[Inexact polynomial evaluation from fresh SFO calls]\label{zv:lem:inverse-apply}
For a known vector $v$ with $\norm v\le V$ and target $r>0$, the original oracle can produce $w$ with
\[
\E[\norm{w-H^{-1}v}^2\mid v,\text{past}]\le r^2
\]
using
\begin{equation}\label{zv:eq:inverse-cost}
O\!\left(1+\sqrt k\log(2+V/(\mu r))\right)
\end{equation}
original calls. Each amplification is finite and computable from public bounds, the target accuracy, and the input. The displayed query count specifies the spectral and accuracy dependence.
\end{lemma}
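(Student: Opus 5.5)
We evaluate the polynomial $p_{m-1}$ of Lemma~\ref{zv:lem:polynomial} applied to $v$ by a stable three-term Chebyshev recurrence. Each matrix action $Hv_j$ is replaced by the amplified estimator \eqref{zv:eq:Haction}. The degree is
\[
m=\lceil \tfrac12\sqrt k\,\log(8V/(\mu r))\rceil+1.
\]
Since $\log(1/q)\ge 2/\sqrt k$, this degree makes the deterministic approximation error $\norm{p_{m-1}(H)v-H^{-1}v}\le 2Vq^m/\mu\le r/2$, using the spectral bound \eqref{zv:eq:polyerror}. It remains to show that the $m$ noisy actions, two calls each, can be made to contribute mean-square error at most $r^2/4$. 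Together these give the total $O(1+\sqrt k\log(2+V/(\mu r)))$ calls.

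\textbf{Recurrence.} Write the residual polynomial $r_m(\lambda)$ through the scaled variable $\tau(\lambda)=(M+\mu-2\lambda)/(M-\mu)$. Use the standard Chebyshev iteration, as in the Chebyshev semi-iterative method for $Hw=v$:
\[
w_{j+1}=w_j+\alpha_j(v-Hw_j)+\beta_j(w_j-w_{j-1}),
\]
with public coefficients $\alpha_j,\beta_j$ depending only on $\mu,M$. Then $w_m=p_{m-1}(H)v$ exactly in the noiseless case. In the noisy run we replace $Hw_j$ by $\widehat{Hw_j}=Hw_j+\zeta_j$. Conditionally on the past, $\zeta_j$ has mean zero and $\E\norm{\zeta_j}^2\le\sigma_g^2/(2A_j^2)$. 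Note that $w_j$ is transcript-measurable, so $A_j$ may be chosen adaptively from public data.

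\textbf{Error propagation.} By linearity, the final error splits into the deterministic term plus $\sum_j Q_j(H)\zeta_j$. Each $Q_j$ is a polynomial determined by the recurrence: it is the response at step $m$ to a unit injection at step $j$. Its spectral norm on $[\mu,M]$ is bounded by a quantity $\Gamma_{m}$ that is at most polynomial in $m$ divided by $\mu$. I would bound it crudely by $\Gamma_m\le C m/\mu$ using the boundedness of Chebyshev ratios $T_i(\cdot)/T_m(\cdot)$; any polynomial-in-$m$ bound suffices.

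The noise terms are martingale differences, so the cross terms vanish in expectation and
\[
\E\Bigl\|\sum_j Q_j(H)\zeta_j\Bigr\|^2\le\Gamma_m^2\sum_j\sigma_g^2/(2A_j^2).
\]
We then choose the finite amplification $A_j=A:=\sigma_g\Gamma_m\sqrt{2m}/r$, which is computable from public bounds, the target, and $m$. With this choice the noise contribution is at most $r^2/4$, and combining with $(a+b)^2\le2a^2+2b^2$ (adjusting constants) gives the MSE $r^2$. The limiting case $k=1$ is trivial: $H=\mu I$ and a single step suffices. Case $\sigma_g=0$ takes $A=1$.

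\textbf{Main obstacle.} The key step is the uniform bound $\Gamma_m$ on the noise-transfer polynomials $Q_j$. Without it, noise amplification could grow like $q^{-m}$, and that growth would have to be hidden in $A$. Such hiding is still legitimate here, because $A$ is merely finite and real arithmetic makes its size free. For this reason, even a crude exponential bound on $\Gamma_m$ suffices for the call count, and I would present the cleaner polynomial bound only as a remark. The query count never depends on $A$, because each action costs exactly two calls regardless of amplification.
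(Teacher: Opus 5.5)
Your proof is correct and uses the same ingredients as the paper. The Chebyshev polynomial of Lemma~\ref{zv:lem:polynomial} controls the deterministic error, and the two-call amplified difference \eqref{zv:eq:Haction} supplies each action. Cross terms vanish because the noises are martingale differences, and a finite public amplification absorbs the propagated noise.

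The genuine difference is the evaluation scheme. The paper expands $p_{m-1}(Mt)$ in monomials and runs Horner's rule with the contraction $K=H/M$. Its error recursion $e_j=Ke_{j+1}+\eta_j$ makes every noise-transfer operator a power of $K$, so each has norm at most one. The noise analysis is then immediate, with $A\asymp 1+\sigma_g\sqrt m/(Mr)$. The price is that the monomial coefficients, and hence the intermediate vectors and query points, can be exponentially large in $m$. This is harmless only because the variance bound is global and arithmetic is exact, which the paper invokes explicitly.

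Your three-term Chebyshev recurrence is the numerically stable scheme. Its iterates stay near $H^{-1}v$, so it would survive models in which noise grows with query size. In exchange, you must control the transfer polynomials, and your stated reason for $\Gamma_m\le Cm/\mu$ (boundedness of $T_i/T_m$) is incomplete on its own. With $\sigma=\tau(0)$, an injection at step $j$ reaches step $m$ as
\[
Q_j(\lambda)=-\frac{4T_j(\sigma)}{(M-\mu)T_m(\sigma)}\,U_{m-j-1}(\tau(\lambda)).
\]
The bound needs two further facts: $|U_n|\le n+1$ on $[-1,1]$, and $T_m(\sigma)\ge T_j(\sigma)T_{m-j}(\sigma)\ge\sigma T_j(\sigma)$. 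Together they give $|Q_j|\le 4m/(M+\mu)$ on $[\mu,M]$.

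As you note in your last paragraph, no explicit estimate is actually needed. Each $Q_j$ is a fixed polynomial with public coefficients, so $\Gamma_m=\max_j\sup_{[\mu,M]}|Q_j|$ is itself a finite public number. Symmetry of $H$ then gives $\norm{Q_j(H)}\le\Gamma_m$.
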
\end{samepage}
\begin{proof}
Choose $m=O(1+\sqrt k\log(2+V/(\mu r)))$ so that $2q^mV/\mu\le r/4$. Write $p_{m-1}(M t)$ in monomials and evaluate it by Horner's rule using the scaled operator $K=H/M$, whose norm is at most one. Replace each $Kw$ action by \eqref{zv:eq:Haction} divided by $M$, with conditional mean-square action error at most $r^2/(16m)$. An amplification of order $1+\sigma_g\sqrt m/(Mr)$ achieves this tolerance.

The errors in Horner evaluation obey the recurrence $e_j=K e_{j+1}+\eta_j$. Hence the final error is a sum of powers of $K$ applied to conditional martingale differences. Cross inner-product expectations vanish, even though the query vectors are adaptive. Since $\norm K\le1$, its mean square is at most $m\cdot r^2/(16m)=r^2/16$. Combining with the deterministic polynomial error proves the assertion with slack. In this recurrence, powers of $K$ propagate the injected action errors. Every intermediate vector and amplification is finite; the global absolute variance bound is valid there. This explicitly uses the stated real-arithmetic cost model. For $k=1$, $H=\mu I$ and inverse application is direct.
\end{proof}
If the input is observed with mean-square error $e_v^2$ from a true bounded vector, projection onto its known norm ball and \eqref{zv:eq:polyerror} add at most a numerical multiple of $e_v^2/\mu^2$ to the output error. The implementation therefore includes right-hand-side estimation error.

\subsection{Calibration and the accelerated upper bound}
Let
\begin{equation}\label{zv:eq:Kb-Lambda}
K_b=\frac{L_f(1+k)}\mu,\qquad \Lambda_0=L_f(1+k)^2.
\end{equation}
Assume $L_f>0$; when $L_f=0$ the affine-response hyperobjective is affine and finite lower boundedness forces its gradient to be zero.

\subsubsection{One-time offset calibration}
Average $q_b$ samples of $g_y(0,0)$ and project onto the ball of radius $L_gR_0$. Write the result as $\widehat b$. With
\begin{equation}\label{zv:eq:qcal}
q_b=1\vee\left\lceil C\sigma_g^2
\left(\frac{K_b^2}{\epsilon^2}+\frac{C_f^2}{\mu^2\Delta^2}\right)\right\rceil,
\end{equation}
we have $\E\norm{\widehat b-b}^2\le\sigma_g^2/q_b$. If $R_0=0$, set $\widehat b=b=0$ and take $q_b=0$.

Let $\bar y(x)=-H^{-1}(B^\top x+\widehat b)$ and $\bar F(x)=f(x,\bar y(x))$. For $e=\widehat b-b$,
\begin{align}
\sup_x|\bar F(x)-F(x)|&\le C_f\norm e/\mu,\label{zv:eq:gaptransfer}\\
\sup_x\norm{\nabla\bar F(x)-\nabla F(x)}&\le K_b\norm e,\label{zv:eq:gradtransfer}\\
\bar F(0)-\inf\bar F&\le\Delta+2C_f\norm e/\mu,
\qquad\Lip(\nabla\bar F)\le\Lambda_0.\label{zv:eq:auxgap}
\end{align}
These follow from $\bar y-y^*=-H^{-1}e$ and the identical matrix $BH^{-1}$ in both hypergradient expressions. The expected auxiliary gap is at most $2\Delta$ under \eqref{zv:eq:qcal}. This gap estimate is in expectation over the calibration samples. Also $\norm{\bar y(0)}\le kR_0$ is a deterministic starting bound due to projection.

\subsubsection{An implementable hypergradient estimate}
At each realized $x$, perform the following finite computation.
\begin{enumerate}
\item Estimate $B^\top x$ by \eqref{zv:eq:BTaction}, add $\widehat b$, and project onto the known ball of radius $L_g(\norm x+R_0)$. Use Lemma~\ref{zv:lem:inverse-apply} to obtain $\widehat y$ with
\[
\E\norm{\widehat y-\bar y(x)}^2\le r_y^2,
\qquad r_y=\frac{\epsilon}{C L_f(1+k)}.
\]
Choose a finite amplification so that the right-hand-side error is a sufficiently small multiple of $\mu^2r_y^2$.
\item Average $B_f=1\vee\lceil C(1+k)^2\sigma_f^2/\epsilon^2\rceil$ fresh upper gradients at $(x,\widehat y)$. Denote the outer component by $\widehat f_x$, and project the lower component onto the radius-$C_f$ ball to get $\widehat a$.
\item Apply Lemma~\ref{zv:lem:inverse-apply} to $\widehat a$ with target $r_v=\epsilon/(CL_g)$, obtaining $\widehat v$. Estimate $B\widehat v$ by \eqref{zv:eq:Baction} with mean-square error at most $\epsilon^2/C^2$.
\item Set $G=\widehat f_x-\widehat{B\widehat v}$.
\end{enumerate}

\begin{lemma}[Per-step error and cost]\label{zv:lem:endpoint-est}
For sufficiently large numerical $C$, conditionally on the calibration and the outer past,
\begin{equation}\label{zv:eq:Gmse}
\E\norm{G-\nabla\bar F(x)}^2\le\epsilon^2/256.
\end{equation}
The conditional expected original SFO cost is
\begin{equation}\label{zv:eq:fast-perstep}
\tO\!\left(1+\sqrt k+\frac{(1+k)^2\sigma_f^2}{\epsilon^2}\right).
\end{equation}
Finite amplification controls the lower-gradient variance inside each estimated linear action.
\end{lemma}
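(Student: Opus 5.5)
The plan is to decompose the error of $G$ into three pieces and bound each separately. Write $\bar y=\bar y(x)$, $\bar a=f_y(x,\bar y)$ and $\bar v=H^{-1}\bar a$. Since $\nabla\bar F(x)=f_x(x,\bar y)-BH^{-1}f_y(x,\bar y)$ (the matrix identity used for \eqref{zv:eq:gradtransfer}), we have
\[
G-\nabla\bar F(x)=\bigl(\widehat f_x-f_x(x,\bar y)\bigr)-B(\widehat v-\bar v)-\bigl(\widehat{B\widehat v}-B\widehat v\bigr).
\]
By $\norm{u_1+u_2+u_3}^2\le3\sum\norm{u_i}^2$, it suffices to make each conditional mean square at most $\epsilon^2/768$. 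All constants are absorbed by taking $C$ large.

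\textbf{Step 1 (lower solve).} The right-hand side $B^\top x+\widehat b$ is estimated by \eqref{zv:eq:BTaction} with amplification chosen so that its error is $O(\mu^2 r_y^2)$. It is then projected onto the ball of radius $L_g(\norm x+R_0)$. This ball contains the true vector, because projection of $\widehat b$ gives $\norm{\widehat b}\le L_gR_0$. The remark after Lemma~\ref{zv:lem:inverse-apply} and the lemma itself give $\E\norm{\widehat y-\bar y}^2\le r_y^2$.

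\textbf{Step 2 (upper batch).} Average $B_f$ fresh upper samples at $(x,\widehat y)$. The outer error is then at most $2\sigma_f^2/B_f+2L_f^2r_y^2$, which is $O(\epsilon^2/(C(1+k)^2))$ and hence small. The same bound holds for $\E\norm{\widehat a-\bar a}^2$, since projection onto the radius-$C_f$ ball is nonexpansive and $\bar a$ lies in that ball.

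\textbf{Step 3 (inverse application).} Apply Lemma~\ref{zv:lem:inverse-apply} to the input $\widehat a$, which has observation error $e_v^2=O(\epsilon^2/(C(1+k)^2))$. The output then satisfies $\E\norm{\widehat v-\bar v}^2\le 2r_v^2+O(e_v^2/\mu^2)$. Multiplying by $\norm B^2\le L_g^2$ gives $O(\epsilon^2/C^2)+O(L_g^2\epsilon^2/(C\mu^2(1+k)^2))=O(\epsilon^2/C)$, using $L_g/\mu=k\le1+k$. This cancellation of the $(1+k)^2$ batch factor against $k^2$ is why $B_f$ is chosen with that factor.

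\textbf{Step 4 (final action).} The term $\widehat{B\widehat v}-B\widehat v$ is controlled directly by the chosen amplification in \eqref{zv:eq:Baction}. The amplification is admissible because $\norm{\widehat v}\le 3C_f/\mu$ from \eqref{zv:eq:polyerror}, so a finite $A$ of order $1+\sigma_g C/\epsilon$ suffices.

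All conditional expectations are taken stage by stage with respect to fresh samples. Cross terms need no independence, because of the $3\sum$ inequality.

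\textbf{Cost.} The two inverse applications cost $O(1+\sqrt k\log(\cdot))$ calls each. Their logarithms are at most polylogarithmic in $\norm x$, $R_0$, $C_f/\mu$ and $1/\epsilon$, and these are suppressed in $\tO$. Each amplified action uses two calls, and the upper batch contributes $B_f=O(1+(1+k)^2\sigma_f^2/\epsilon^2)$. Summing gives \eqref{zv:eq:fast-perstep}.

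\textbf{Main obstacle.} The expected difficulty is Step~3's error propagation. The input $\widehat a$ is random and correlated with the Horner-stage noise through adaptivity. I would handle this by conditioning on $\widehat a$ and applying Lemma~\ref{zv:lem:inverse-apply}, which holds conditionally on $v$ and the past. The deterministic linear map $p_{m-1}(H)$, with norm at most $3/\mu$, then carries the input error $\widehat a-\bar a$. Finally, $\norm{p_{m-1}(H)\bar a-H^{-1}\bar a}\le 2q^mC_f/\mu$ is absorbed into $r_v$.
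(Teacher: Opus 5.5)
Your proposal is correct and follows the paper's argument: the same sum-of-squares split into the upper-batch error for $f_x$, the $k$-amplified error of $\widehat a$ plus the inverse-application error $L_g^2r_v^2$, and the final-action error $\epsilon^2/C^2$, with the same choices of $B_f$, $r_y$, $r_v$ and the same call accounting. One small correction is that $\norm{\widehat v}\le 3C_f/\mu$ need not hold almost surely, because $\widehat v$ carries the stochastic Horner errors; you do not need this bound, since by the globally uniform variance bound the difference quotient \eqref{zv:eq:Baction} has variance at most $\sigma_g^2/(2A^2)$ for every $v$.
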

\begin{proof}
Using projection and Lipschitzness of $f_y$,
\[
\E\norm{\widehat a-f_y(x,\bar y)}^2
\le 2\sigma_f^2/B_f+2L_f^2 r_y^2.
\]
The same upper batch estimates $f_x$; correlation between its two components is allowed. Expanding the hypergradient difference and using a sum-of-squares bound yields
\[
\E\norm{G-\nabla\bar F}^2
\le C_1\left[(1+k)^2(\sigma_f^2/B_f+L_f^2r_y^2)
+L_g^2r_v^2+\epsilon^2/C^2\right].
\]
This proves \eqref{zv:eq:Gmse} with numerical slack. Each inverse application costs \eqref{zv:eq:inverse-cost}; every matrix action uses two original calls. Add the upper batch and the constant number of right-hand-side and final-product calls. Known bounds on the input norms enter the logarithmic factors.
\end{proof}

\begin{theorem}[Accelerated zero-variation upper bound]\label{zv:thm:endpoint-upper}
Under \cref{ass:primitive} with $\rho=0$, an algorithm in the original SFO model achieves $\E\norm{\nabla F(\widehat x)}\le\epsilon$ at expected cost
\begin{equation}\label{zv:eq:endpoint-upper}
\boxed{\ \tO\!\left[
q_b+
\left(1+\frac{\Lambda_0\Delta}{\epsilon^2}\right)
\left(1+\sqrt k+\frac{(1+k)^2\sigma_f^2}{\epsilon^2}\right)
\right].\ }
\end{equation}
Here $q_b$ is \eqref{zv:eq:qcal}, or zero when $R_0=0$. The matrices $H,B$ remain unknown. Both general unbiased-vector and conservative sample-gradient SFOs are allowed.
\end{theorem}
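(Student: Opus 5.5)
The plan has two phases: first calibrate the unknown offset once, then run inexact gradient descent on the surrogate $\bar F$ built from the estimated offset $\widehat b$. We then transfer stationarity of $\bar F$ back to $F$ using \eqref{zv:eq:gradtransfer}.

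\textbf{Calibration.} Average $q_b$ samples of $g_y(0,0)$ and project, as in \eqref{zv:eq:qcal}, to obtain $\widehat b$ with $\E\norm{e}^2\le\sigma_g^2/q_b$, where $e=\widehat b-b$. By the choice of $q_b$ and Jensen,
\[
K_b\E\norm e\le\epsilon/C \quad\text{and}\quad C_f\E\norm e/\mu\le\Delta/C.
\]
Conditionally on $\widehat b$, the surrogate $\bar F$ is $\Lambda_0$-smooth by \eqref{zv:eq:auxgap}, with gap $\bar\Delta:=\Delta+2C_f\norm e/\mu$. Its expectation is $\E\bar\Delta\le2\Delta$. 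When $R_0=0$ this step is skipped, $\bar F=F$, and $q_b=0$.

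\textbf{Outer loop.} Run $x^+=x-G/\Lambda_0$ with $G$ from the four-step procedure. Lemma~\ref{zv:lem:endpoint-est} gives, conditionally on the calibration and the past, $\E\norm{G-\nabla\bar F(x)}^2\le\epsilon^2/256$. The descent inequality for $\bar F$ is
\[
\bar F(x^+)\le\bar F(x)-\frac{\norm{\nabla\bar F(x)}^2}{2\Lambda_0}+\frac{\norm{G-\nabla\bar F(x)}^2}{2\Lambda_0}.
\]
Summing this over $N_{\rm out}$ steps and outputting a uniformly chosen iterate gives
\[
\E\norm{\nabla\bar F(\widehat x)}^2\le\frac{2\Lambda_0\bar\Delta}{N_{\rm out}}+\frac{\epsilon^2}{256}.
\]
Taking expectation over the calibration and choosing $N_{\rm out}=\lceil C\Lambda_0\Delta/\epsilon^2\rceil+1$ makes the right side at most $\epsilon^2/16$. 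Jensen then gives $\E\norm{\nabla\bar F(\widehat x)}\le\epsilon/4$. Finally \eqref{zv:eq:gradtransfer} gives
\[
\E\norm{\nabla F(\widehat x)}\le\epsilon/4+K_b\E\norm e\le\epsilon.
\]
The total cost is $q_b$ plus $N_{\rm out}$ times the per-step cost \eqref{zv:eq:fast-perstep}, which is \eqref{zv:eq:endpoint-upper}.

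\textbf{Main obstacle: the logarithms.} The logarithmic factors in Lemma~\ref{zv:lem:inverse-apply} depend on the input norms, about $L_g(\norm x+R_0)$ and $C_f$. These must stay polylogarithmic along the random trajectory. I would bound $\norm{x_t}\le\sum_s\norm{G_s}/\Lambda_0$ and control $\sum_s\norm{G_s}^2$ through the descent inequality, as in the proof of Theorem~\ref{up:thm:frozen-upper}. Since $\log$ is concave, Jensen places these norms inside logarithms of expected quantities that are polynomial in $\Delta,\Lambda_0,N_{\rm out}$, and the expected per-step cost stays $\tO(\cdot)$. The amplifications may depend on the realized $\norm x$; they do not affect the query count because each action uses two calls regardless of $A$.

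A second subtlety is that the gap $\bar\Delta$ is random and correlated with $e$. This is harmless because the outer count is fixed in advance from $\Delta$ and only $\E\bar\Delta$ enters the bound. The result applies to conservative oracles too, since only unbiasedness and the variance bound are used.
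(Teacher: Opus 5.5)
Your proposal is correct and follows the paper's proof essentially step for step. Both calibrate once, then run fixed-horizon inexact descent on $\bar F$ with a uniformly chosen output, average over the calibration using that the bound is linear in the random gap, transfer to $F$ through the uniform estimate \eqref{zv:eq:gradtransfer}, and control the logarithmic inverse-application factors by bounding the squared steps with the descent inequality and then applying Cauchy--Schwarz and Jensen. The only slip is cosmetic: Jensen gives $K_b\E\norm e\le\epsilon/\sqrt C$ rather than $\epsilon/C$, which changes nothing.
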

\begin{proof}
After calibration use Lemma~\ref{zv:lem:endpoint-est} and $x_{t+1}=x_t-G_t/\Lambda_0$. Completing a square in the smoothness inequality gives
\[
\bar F(x_{t+1})\le\bar F(x_t)
-\frac{\norm{\nabla\bar F(x_t)}^2}{2\Lambda_0}
+\frac{\norm{G_t-\nabla\bar F(x_t)}^2}{2\Lambda_0}.
\]
Take a fixed $n=\max\{1,\lceil C\Lambda_0\Delta/\epsilon^2\rceil\}$ and output a uniform iterate. Average conditionally and then over the calibration samples; \eqref{zv:eq:auxgap} and \eqref{zv:eq:Gmse} make $\E\norm{\nabla\bar F(\widehat x)}^2$ a sufficiently small multiple of $\epsilon^2$. The uniform bound \eqref{zv:eq:gradtransfer} and \eqref{zv:eq:qcal} transfer this guarantee to $F$, including the dependence of the output on the calibrated offset.

For the expected query count, the degree of the first polynomial depends logarithmically on $1+L_g(\norm x+R_0)/(\mu r_y)$. The descent inequality bounds the expected sum of squared steps; Cauchy--Schwarz bounds expected squared displacement over the fixed horizon, and Jensen bounds the expected logarithm. This yields only the stated initialization and accuracy logarithms. Multiplying \eqref{zv:eq:fast-perstep} by $n$ and adding $q_b$ proves \eqref{zv:eq:endpoint-upper}.
\end{proof}

In normalized nontrivial high accuracy, this becomes
\begin{equation}\label{zv:eq:normalized-upper}
\tO\!\left(
1+L\kappa^{5/2}\Delta\epsilon^{-2}
+L\kappa^4\sigma_f^2\Delta\epsilon^{-4}
+\kappa^4\sigma_g^2\epsilon^{-2}
+\frac{\kappa^2\sigma_g^2}{\Delta^2}
\right).
\end{equation}
When the initial lower minimizer is known exactly, the last two calibration terms are zero. The $\sqrt\kappa$ inverse-application cost gives the deterministic $\kappa^{5/2}$ order recorded in \citet{CJZ}. Here that order is realized at the global-variance endpoint through finitely amplified fresh first-order calls.

\subsection{The noise-dominated matching regime}
Suppose $\rho=0$, $\sigma_f=0$, $R_0>0$, and the accuracy is small enough for Corollary~\ref{zv:cor:calibration}, for the nontrivial outer count, and for $\epsilon\le c\kappa\Delta$. If
\begin{equation}\label{zv:eq:noise-dominance}
\sigma_g^2\gtrsim L\Delta/\kappa^{3/2},
\end{equation}
then the calibration term dominates the deterministic term of \eqref{zv:eq:normalized-upper}, and the gap-calibration term is also no larger. Hence the entire zero-variation class has the upper order $\tO(\kappa^4\sigma_g^2\epsilon^{-2})$, while \cref{zv:cor:calibration} gives the matching lower order. Therefore, in this explicitly stated branch,
\begin{equation}\label{zv:eq:branch-match}
\boxed{\ \mathsf T_{\rho=0,\sigma_f=0}(\epsilon)
=\tTheta(\kappa^4\sigma_g^2\epsilon^{-2}).\ }
\end{equation}
The comparison matches accuracy, condition-number, and lower-noise dependence in the stated branch with $R_0>0$. The known-offset case $R_0=0$ has zero calibration cost.

Amplification controls the error of homogeneous linear actions, and offset calibration supplies the affine component. Lemma~\ref{zv:lem:experiment} gives its statistical cost uniformly over all queries.

\section{Intrinsic Curvature and Estimator Geometry}\label{app:sharpness}
\subsection{Simultaneous curvature and bias sharpness}
The following family realizes the intrinsic curvature and frozen-tilt bias scales at a common nonstationary point. It complements the sequential lower construction by identifying the size of these local quantities under the primitive regularity budgets.

\begin{proposition}[Large curvature and sharp tilt bias coexist]\label{sh:prop:simultaneous}
For $0<\mu\lesssim L$, $\rho>0$, and sufficiently small $\epsilon>0$, there is a globally regular finite-gap NC--SC problem with $\norm{f_y}=L$, first-order smoothness $O(L)$, and lower Hessian Lipschitzness $O(\rho)$ such that
\begin{equation}\label{zv:eq:coattain}
F'(0)=2\epsilon,\qquad
F''(0)=-\frac{\rho L^3}{\mu^3},\qquad
\lim_{t\downarrow0}\frac{|h_t-F'(0)|}{t}
=\frac{\rho L^3}{2\mu^3}.
\end{equation}
Here $h_t$ denotes the ideal frozen-tilt estimator evaluated at zero.
\end{proposition}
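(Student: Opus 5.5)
The plan is to construct an explicit one-dimensional cubic example, read off the three quantities in \eqref{zv:eq:coattain} from the implicit-function calculus at $x=0$, and then globalize it with flat cutoffs that do not change any jet at the origin. Locally, take a scalar outer variable $x$, a scalar lower variable $y$ (plus a dummy coordinate $z$ with curvature $L$ if the smoothness budget needs it), and
\[
g(x,y)=\frac\mu2y^2+\frac\rho6y^3-Lxy,\qquad f(x,y)=Ly-\Bigl(\frac{L^2}{\mu}-2\epsilon\Bigr)x .
\]
Then $y^*(0)=0$ and $g_y=\mu y+\rho y^2/2-Lx$. Implicit differentiation gives
\[
y^{*\prime}=\frac{L}{\mu+\rho y^*},\qquad y^{*\prime\prime}=-\frac{\rho L\,y^{*\prime}}{(\mu+\rho y^*)^2},
\]
so $y^{*\prime}(0)=L/\mu$ and $y^{*\prime\prime}(0)=-\rho L^2/\mu^3$. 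Since $F(x)=f(x,y^*(x))$ with $f_y\equiv L$, we get $F'(0)=2\epsilon$ and $F''(0)=-\rho L^3/\mu^3$. The outer coefficient is chosen exactly to cancel $L^2/\mu$; this is legitimate because \cref{ass:primitive} bounds only $f_y$, not $f_x$.

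For the bias, $a_0=f_y=L$ and the tilted solution solves $\mu y+\rho y^2/2=-tL$. Expanding,
\[
y_t=-\frac{tL}{\mu}-\frac{\rho t^2L^2}{2\mu^3}+O(t^3).
\]
Since $g_x=-Ly$, we have $h_t=f_x-Ly_t/t=f_x+L^2/\mu+\rho tL^3/(2\mu^3)+O(t^2)$. Subtracting $F'(0)=f_x+L^2/\mu$ and dividing by $t$ gives the limit $\rho L^3/(2\mu^3)$. This is the Taylor computation behind \eqref{up:eq:tilt-bias}, with both error terms aligned: $g_{xy}$ is constant, so only the $e_y$ term contributes, at the rate $\frac{1}{2}\rho\norm d^2/\mu$ times $L$.

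Globalization proceeds in three steps, none of which touches the jets at $0$.
\begin{enumerate}
\item Replace $\rho y^3/6$ by $\rho w^3 c(y/w)$, where $c(s)=s^3/6$ near $0$ and $c$ is compactly supported $C^3$. With $w=c_0\mu/\rho$, the lower Hessian perturbation is $O(\rho w)\le\mu/2$, strong convexity survives, and the Hessian-Lipschitz constant stays $O(\rho)$.
\item Replace $Ly$ in $f$ by $Lq(y)$, with $q$ equal to the identity near $0$ and bounded with bounded derivatives. This keeps $\norm{f_y}\le L$, with equality at the point, and keeps $f$ from growing along $y^*(x)\sim\kappa x$.
\item Replace the large linear outer term $-(L^2/\mu)x$ by $-(L^2/\mu)W\psi(x/W)$, with $\psi$ odd, equal to the identity near $0$ and bounded. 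Take the width $W\gtrsim\kappa$ so that $\Lip(\nabla f)=O(L)$. Then add a coercive term, such as $P(x)$ from \eqref{sg:eq:P} composed with a map vanishing to third order on a neighborhood of $0$, or simply $L\cdot\mathrm{flat}(x)$, so that $F$ is bounded below with finite gap.
\end{enumerate}
All modifications are exactly inactive on a neighborhood of $(x,y)=(0,0)$ containing $y_t$ for small $t$. Hence the three limits in \eqref{zv:eq:coattain} are unchanged.

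The main obstacle is making these cutoffs mutually compatible while preserving $O(L)$ smoothness of $\nabla f$ and $\nabla g$ and a finite gap. The outer linear term has slope $L\kappa$, and $y^*$ moves at rate $\kappa$. The cutoff widths must therefore scale with $\kappa$ in $x$ and with $\mu/\rho$ in $y$. We also need the composite $F$ to remain bounded below even though $F''(0)$ is large and negative. I would first check that on the region $|y|\le w/4$ everything is exactly the local model. Outside that region, $F$ is a bounded function of $y^*$ plus the bounded $\psi$-term plus the coercive term, which gives boundedness below directly; "sufficiently small $\epsilon$" is only used to keep $2\epsilon$ harmless inside these bounds.
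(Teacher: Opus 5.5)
Your argument is correct, but it obtains $F'(0)=2\epsilon$ by a different mechanism from the paper's.

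The paper takes lower variables $(u,s,z)$ and lets $x$ drive only $u$, through $-Lxu$. The upper objective $f=Ls+2\epsilon\psi(x)$ reads only $s$, and the two are coupled by the localized cross-cubic $\frac\rho2(u^2s+us^2)\zeta((u,s)/r)$ with $r=c\mu/\rho$. Then $u'(0)=L/\mu$, while $s'(0)=0$ and $s''(0)=-\rho L^2/\mu^3$. So the indirect first-order effect vanishes identically, and $F'(0)$ comes from the small outer tilt alone. Tilting along $(0,L,0)$ gives $s_t\approx-Lt/\mu$ and $u_t\approx-\rho L^2t^2/(2\mu^3)$, and through $g_x=-Lu$ this yields the same bias limit you derive.

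You instead align $f_y$ with the driven direction. This makes the implicit calculus one-dimensional, but it forces you to cancel the indirect slope $L^2/\mu$ with a direct slope in $f_x$. That is admissible: Assumption~\ref{ass:primitive} bounds only $f_y$, and the statement asks only for a finite gap. It has two costs, however.
\begin{enumerate}
\item $|f_x(0,0)|\approx L\kappa$.
\item Your cutoffs inflate the gap. Once $y^*(x)\approx\kappa x$ leaves the identity region of a unit-scale $q$ (at $|x|\sim1/\kappa$), the term $Lq(y^*(x))$ freezes. Meanwhile $-(L^2/\mu)W\psi(x/W)$ keeps falling with slope about $L\kappa$ up to $|x|\sim W\gtrsim\kappa$. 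A coercive term of curvature $O(L)$ needs $|x|\gtrsim\kappa$ to offset that slope. Hence $F(0)-\inf F$ is at least of order $L\kappa^2$.
\end{enumerate}
The paper's example instead has $f_y\equiv(0,L,0)$, $\norm{\nabla f}\le L+O(\epsilon)$, and gap at most $2Lr+2\epsilon\norm{\psi}_\infty=O(L\mu/\rho+\epsilon)$. This is what lets it sit inside a normalized budget $\Delta=O(L)$ when $\rho\gtrsim\mu$, as remarked at the end of its proof.

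You can recover that gap order by omitting step~2 and the $\psi$-cutoff. Take $f=Ly-(L^2/\mu-2\epsilon)x+L\,\mathrm{flat}(x)$. For your profile $c$, the lower equation $\mu y^*+\rho w^2c'(y^*/w)=Lx$ gives
\[
F(x)=-\frac{L\rho w^2}{\mu}\,c'\bigl(y^*(x)/w\bigr)+2\epsilon x+L\,\mathrm{flat}(x),
\]
whose first term is $O(L\mu/\rho)$. The two linear pieces then cancel exactly and $\norm{f_y}\equiv L$. The compatibility issue you identify as the main obstacle also disappears. Only the large but admissible $f_x$ then distinguishes your example from the paper's.
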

\begin{proof}
Let $\zeta:\R^2\to[0,1]$ be a fixed smooth cutoff, equal to one on the unit ball and zero outside the radius-two ball. Put $r=c\mu/\rho$ with small numerical $c$. Let $\psi$ be an odd compactly supported smooth function equal to $x$ near zero. Set
\[
f(x,u,s,z)=Ls+2\epsilon\psi(x),
\]
\begin{equation}\label{zv:eq:sharp-mean}
g(x,u,s,z)=\frac\mu2(u^2+s^2)+\frac L2z^2-Lxu
+\frac\rho2(u^2s+us^2)\zeta((u,s)/r).
\end{equation}
The cubic perturbation has Hessian norm at most $C\rho r\le\mu/2$ and third derivative at most $C\rho$. The mixed $x,u$ block is constant. Thus all global mean assumptions hold, with fixed slack.

The lower solution at zero is $(0,0,0)$. In the cutoff core, implicit differentiation of the lower equations gives $u'(0)=L/\mu$, $s'(0)=0$, and $s''(0)=-\rho L^2/\mu^3$. The outer tilt has derivative $2\epsilon$ and second derivative zero at zero. This proves the first two identities.

For the frozen direction $(0,L,0)$, the tilted lower equations at $x=0$ in the core are
\[
\mu u+\rho us+\tfrac\rho2s^2=0,
\qquad
\mu s+\tfrac\rho2u^2+\rho us+tL=0.
\]
The implicit function theorem yields
\[
s_t=-Lt/\mu+O(t^3),\qquad
u_t=-\rho L^2t^2/(2\mu^3)+O(t^3),
\]
Since $g_x=-Lu$, the estimator obeys
\[
h_t-F'(0)=-Lu_t/t
=\frac{\rho L^3}{2\mu^3}t+O(t^2).
\]
This proves the bias limit. Outside the lower cutoff support, a lower minimizer has $s^*=0$; inside it, $|s^*|\le2r$. The compact outer tilt is bounded. Therefore $F$ is bounded and its initial gap is at most $2Lr+2\epsilon\norm\psi_\infty$. Fixed normalized gap budgets can contain this example after numerical rescaling when $\rho\gtrsim\mu$.
\end{proof}

The proposition establishes simultaneous local sharpness. The global oracle-complexity comparison is given by the sequential construction in \cref{app:lower}.

\subsection{Sequential curvature and informative amplitude}
The fixed-gap positive-variation comparison remains
\begin{equation}\label{sh:eq:remaining-gap}
\boxed{
\Omega(L^3\chi^2\kappa^8\sigma_g^2\Delta\epsilon^{-6})
\ \,\le\,\
\T_{\SFO}(\epsilon)
\ \,\le\,\
\tO(L^3\chi^3\kappa^8\sigma_g^2\Delta\epsilon^{-6}).
}
\end{equation}
The leading coefficients differ by $\chi$, which is one power of $\kappa$ in the common-scale regime.

The informative amplitude of the multiscale construction attains the scale dictated by its normal features. If a bounded $C^2$ scalar response satisfies $|v'(0)|=a$ and $\norm{v''}_\infty\le M$, Taylor's theorem at $\pm a/M$ gives $\norm v_\infty\ge a^2/(2M)$. For the first frontier, $a\asymp\mu\beta$ and $M\lesssim\mu\vartheta$, so the amplitude is at least $\Omega(\mu\beta^2/\vartheta)$, matching the multiscale envelope.

The hard family has $F_U(x)=\mathcal H_U(\kappa x)$ with $\Lip(\nabla\mathcal H_U)=O(L)$, giving chain length $T\asymp L\kappa^2\Delta/\epsilon^2$. The universal upper analysis uses the curvature scale $O(L\chi\kappa^2)$. A sharper characterization can therefore arise from a finite-gap construction realizing this larger curvature along a sequentially necessary path, or from an upper analysis that exploits the joint evolution of curvature and estimation cost. The preceding sharpness example and amplitude inequality specify the local scales relevant to such a characterization.

\end{document}